\newtheorem{lemma}{Lemma}[section]
\newtheorem{theorem}[lemma]{Theorem}
\newtheorem*{theorem*}{Theorem}
\newtheorem{corollary}[lemma]{Corollary}
\newtheorem*{question*}{Question}
\newtheorem{proposition}[lemma]{Proposition}
\newtheorem*{proposition*}{Proposition}
\theoremstyle{remark}
\theoremstyle{definition}
\newtheorem*{definition*}{Definition}
\newtheorem*{conjecture*}{Conjecture}
\newtheorem*{remark*}{Remark}
\newtheorem*{remarks*}{Remarks}
\newtheorem*{claim*}{Claim}
\newcommand{\C}{{\mathbb C}}
\newcommand{\E}{{\mathbb E}}
\newcommand{\N}{{\mathbb N}}
\newcommand{\R}{{\mathbb R}}
\newcommand{\T}{{\mathbb T}}
\newcommand{\Z}{{\mathbb Z}}
\newcommand{\CA}{{\mathcal A}}
\newcommand{\CC}{{\mathcal C}}
\newcommand{\CF}{{\mathcal F}}
\newcommand{\CT}{{\mathcal T}}
\newcommand{\norm}[1]{\left\Vert #1\right\Vert}
\newcommand{\nnorm}[1]{\lvert\!|\!| #1|\!|\!\rvert}
\begin{document}

\title[Multiple ergodic averages for tempered functions]{Multiple ergodic averages for tempered functions}

\author{Andreas Koutsogiannis}

\address[Andreas Koutsogiannis]{The Ohio State University, Department of Mathematics, Columbus, Ohio, USA} \email{koutsogiannis.1@osu.edu}

\begin{abstract}
Following Frantzikinakis' approach on averages for Hardy field functions of different growth, we add to the topic by studying the corresponding averages for tempered functions, a class which also contains functions that oscillate and is in general more restrictive to deal with. Our main result is the existence and the explicit expression of the $L^2$-norm limit of the aforementioned averages, which turns out, as in the Hardy field case, to be the ``expected'' one. The main ingredients are the use of, the now classical, PET induction (introduced by Bergelson), covering a more general case, namely a ``nice'' class of tempered functions (developed by Chu-Frantzikinakis-Host for polynomials and Frantzikinakis for Hardy field functions) and some equidistribution results on nilmanifolds (analogous to the ones of Frantzikinakis' for the Hardy field case).
\end{abstract}

\subjclass[2010]{Primary: 37A30; Secondary: 37A05. }

\keywords{Fej\'{e}r functions, tempered functions, ergodic averages, equidistribution.}

\maketitle

\section{Introduction}




In 1977, Furstenberg provided (in \cite{Fu}) a purely ergodic theoretical proof of Szemer\'{e}di's theorem, i.e., every subset of natural numbers with positive upper density contains arbitrarily long arithmetic progressions,\footnote{ For a subset $A\subseteq \mathbb{N}$ we define its \emph{upper density} to be the number $\limsup_{N\to\infty}\frac{|A\cap\{1,\ldots,N\}|}{N}.$ The \emph{lower density} is defined analogously with the use of $\liminf$ in place of $\limsup$; when these two values coincide, the common value is called \emph{density} of $A$.} by studying the $L^2$-norm behavior as $N\to\infty$ of the multiple ergodic averages:
\begin{equation}\label{E:Furstenberg}
\frac{1}{N}\sum_{n=1}^N T^{n}f_1\cdot T^{2n}f_2\cdots T^{\ell n}f_\ell,
\end{equation}
where $T:X\to X$ is an invertible measure preserving transformation on the probability space $(X,\mathcal{B},\mu)$ \footnote{ We call the quadruple $(X,\mathcal{B},\mu,T)$ \emph{system}.} and $f_1,\ldots,f_\ell\in L^\infty(\mu).$

More specifically, he showed that for any $A\in\mathcal{B}$ with $\mu(A)>0$ we have that \begin{equation}\label{E:Furstenberg2}
 \liminf_{N\to\infty}\frac{1}{N}\sum_{n=1}^N \mu(A\cap T^{-n}A\cap T^{-2n}A\cap\ldots\cap T^{-\ell n}A)>0
 \end{equation} and, via a Correspondence Principle (see \cite{Fu}), obtained the aforementioned result.
 
  It is worth mentioning that the existence of the limit in \eqref{E:Furstenberg} (and \eqref{E:Furstenberg2}) was not known at the time; relatively recently (in \cite{HK99}) Host and Kra not only proved its existence but actually provided an explicit expression of it.
  
In the same paper, \cite{Fu}, under the weakly mixing assumption of $T,$\footnote{ $T$ (and the corresponding system $(X,\mathcal{B},\mu,T)$) is called \emph{weakly mixing} (w.m.),  if  $T\times T$ is \emph{ergodic} for $\mu\times\mu$ (i.e., the only $T\times T$-invariant measurable sets are the ones of trivial measure in $\{0,1\}$).  } Furstenberg showed that for any $\ell\in \mathbb{N}$ and $f_1,\ldots, f_\ell\in L^\infty(\mu)$ we have 
\begin{equation}\label{E:Furstenberg3}
\frac{1}{N}\sum_{n=1}^N T^n f_1\cdot T^{2n} f_2 \cdots T^{\ell n} f_\ell\to \prod_{i=1}^\ell \int f_i\; d\mu,
\end{equation}
as $N\to\infty,$ where the convergence takes place in $L^2(\mu).$
 
\medskip 
 
From now on, except otherwise stated, every limit that we take is understood to be an $L^2$-norm limit as $N\to\infty$; we will also refer to the quantity $\prod_{i=1}^\ell \int f_i$ as the \emph{expected limit}.

\subsection{The polynomial case}
The first far-reaching extension of Furstenberg's w.m. convergence result came ten years later (in \cite{Be87a}).
Bergelson was the first to view the iterates $n, 2n, \ldots, \ell n$ as linear polynomials $p_1, \ldots, p_\ell$ with the property $p_i-p_j\neq$ constant for all $i\neq j.$
We call the non-constant polynomials $p_1,\ldots,p_\ell \in \mathbb{R}[t]$ \emph{essentially distinct} if $p_i-p_j\neq$ constant for all $i\neq j.$ We also call \emph{integer polynomials} the polynomials that take integer values at integers.


Exploiting the van der Corput trick (see Subsection~\ref{vdC,FCP} below), introducing  the PET induction (i.e., \emph{Polynomial Exhaustion Technique}), the only method that we have to this day in reducing the complexity to deal with polynomial, Hardy field or tempered iterates, Bergelson showed that, when $T$ is weakly mixing, $a_i$'s are essentially distinct integer polynomials and $f_1,\ldots, f_\ell\in L^\infty(\mu)$, 
\begin{equation}\label{E:Single}
\frac{1}{N}\sum_{n=1}^N T^{a_1(n)}f_1\cdots T^{a_\ell(n)}f_\ell
\end{equation}
converges to the expected limit. Some years later, extending Furstenberg's method, Bergelson and Leibman (in \cite{BL0}) established polynomial extensions of Szemer{\'e}di's theorem for $a_i$ integer polynomials with $a_i(0)=0,$ $1\leq i\leq \ell.$



Switching to multiple transformations, we call $(X,\mathcal{B},\mu,T_1,\ldots,T_\ell)$ \emph{system} if each $T_i$ is an invertible measure preserving transformation and $T_iT_j=T_jT_i$ for all $i,j.$

When we deal with multiple $T_i$'s we are interested in the convergence of the expression 
\begin{equation}\label{E:Multiple}
\frac{1}{N}\sum_{n=1}^N T_1^{a_1(n)}f_1\cdots T_\ell^{a_\ell(n)}f_\ell,
\end{equation}
where, once more,  $(a_i(n))_n,$ $1\leq i\leq \ell,$ are appropriate, integer valued, sequences and $f_i$'s are bounded functions. In this setting the picture is totally different. Even for essential distinct polynomials, the assumptions on $T_i$'s are not clear, not even in the special case where $\ell=2$ and $a_1(t)=t^2+t,$ $a_2(t)=t^2$ (for more details on general polynomial $\mathbb{Z}^d$-actions see \cite{DKS2}).  Chu, Frantzikinakis and Host though showed (in \cite{CFH}) that in the case where each $T_i$ is weakly mixing and $a_i$'s are non-constant integer polynomials of distinct degrees, \eqref{E:Multiple} converges to the expected limit (for the corresponding result with iterates $[a_i(n)],$ where $a_i\in\mathbb{R}[t],$ see \cite{K1}).\footnote{ Note that this result, doesn't cover the case $a_1(t)=t^2+t,$ $a_2(t)=t^2$ that we mentioned before.} This result was used to show the convergence of averages with polynomial iterates of distinct positive degrees for any system.


 

A point that has to be highlighted here is that the aforementioned result is an implication of a result for products of transformations with iterates forming a ``nice'' family of polynomials. Hence, a big difference to the single transformation case ($\mathbb{Z}$-action) is that in order to have convergence of \eqref{E:Multiple} to the expected limit, one has to prove something stronger about products of transformations ($\mathbb{Z}^\ell$-actions) with ``nice'' iterates.

Solving a conjecture of Bergelson and Leibman (stated in \cite{BL0}) Walsh (in \cite{W12}) showed that the expressions in \eqref{E:Multiple} always have limit for $a_i$'s polynomials with integer values. (Actually, Walsh proved an even more general result for $\mathbb{Z}^\ell$-actions along F{\o}lner sequences, where the transformations generate a nilpotent group--for the corresponding result with commuting transformations and iterates $[a_i(n)],$ where $a_i\in \mathbb{R}[t],$ see \cite{K1}.)

\subsection{Hardy field and tempered functions}

In this subsection we define two important and more exotic, comparing to the polynomial ones, classes of functions; the Hardy field and the tempered ones. 

\subsection*{Hardy field functions}  Let $B$ be the collection of equivalence classes of real valued functions defined on some halfline $(c,\infty),$ $c\geq 0,$ where two functions that agree eventually are identified. These equivalence classes are called \emph{germs} of functions.  A \emph{Hardy field} is a subfield of the ring $(B, +, \cdot)$ that is closed under differentiation.\footnote{We use the word \emph{function} when we refer to elements of $B$ (understanding that all the operations defined and statements made for elements of $B$ are considered only for sufficiently large values of $x\in \mathbb{R}$).} 

\medskip

Usually, one deals with Hardy field functions $g$ of \emph{polynomial growth}, i.e., 
functions which are growing strictly faster than $x^{i}\log x$ and strictly slower than $x^{i+1}$ (called of \emph{polynomial degree} $i$), and more specifically with the class of logarithmico-exponential Hardy field functions, $\mathcal{LE}$,\footnote{ $a$ is a \emph{logarithmico-exponential Hardy field function} if it belongs to a Hardy field of real valued functions and it's defined on some $(c,+\infty),$ $c\geq 0,$ by a finite combination of symbols $+, -, \times, \div, \sqrt[n]{\cdot}, \exp, \log$ acting on the real variable $x$ and on real constants (for more on Hardy field functions and in particular for logarithmico-exponential ones one can check \cite{F2, F3}).} which can be handled more easily. 

\medskip

A more restrictive to the previous class to work with, is that of tempered functions.

\subsection*{Tempered functions} Let $i$ be a non-negative integer. A real-valued function $g$ which is $(i+1)$-times continuously differentiable on $[x_0,\infty),$ where $x_0\geq 0,$ is called a \emph{tempered function of degree $i$} (we write $\deg g=i$), if the following hold:
\begin{enumerate}
\item[(1)] $g^{(i+1)}(x)$ tends monotonically to $0$ as $x\to\infty;$

\item[(2)]  $\lim_{x\to\infty}x|g^{(i+1)}(x)|=\infty.$
\end{enumerate}
Tempered functions of degree $0$ are called \emph{Fej\'{e}r functions}. (See \cite{BK} for more details on tempered functions.)

\medskip

For a weakly mixing transformation, Bergelson and H{\aa}land-Knutson showed that \eqref{E:Single} converges to the expected limit for $a_i(n)=[g_i(n)],$ $1\leq i\leq \ell,$  where $g_i\in \mathcal{G}\cap\mathcal{LE}$ with $g_i-g_j\in \mathcal{G}$ for $i\neq j$ (\cite[Theorem A]{BK});\footnote{ $\mathcal{G}=\mathcal{T}\cup\mathcal{P},$ where $\mathcal{P}=\bigcup_{i\geq 0}\{g\in C^{\infty}(\mathbb{R}^+):\;\exists\;\gamma\in \mathbb{R}\setminus\{0\},\;\lim_{x\to\infty}g^{(i+1)}(x)=\gamma,$ $ \;\lim_{x\to\infty} x^jg^{(i+j+1)}(x)=0,\;j\in\mathbb{N}\}$ contains the real polynomials of positive degree and $\mathcal{T}$ is a special subclass of tempered functions that we will use throughout this article and is defined in the next section.} or $g_i\in \mathcal{G}$ with $g_i-g_j\in \mathcal{G}$ for $i\neq j$ and such that the family $\{g_1,\ldots,g_\ell\}$ has the $R$-property (see \cite[Definition~1.10]{BK}).

Frantzikinakis was the first one to obtain convergence results providing the precise expression of the limit for general systems, i.e., under no assumption(s) on the transformation(s). Verifying a conjecture from \cite{BK}, he proved (in \cite[Theorem~2.6]{F2}) that for  $a_i(n)=[g_i(n)],$ where $g_\ell\prec\ldots\prec g_1\in \mathcal{LE}\cap \mathcal{U},$ \footnote{ $\mathcal{U}=\{g\in C(\mathbb{R}^+):\;x^{k+\varepsilon}\prec g(x)\prec x^{k+1}$ for some $k\in \omega$ and some $\varepsilon>0\},$ where $\omega$ denotes the set of whole numbers, i.e., non-negative integers. We write $g_2\prec g_1$ if $|g_1(x)|/|g_2(x)|\to \infty$ as $x\to\infty.$}  \eqref{E:Single} converges to the expected limit.\footnote{ Which for $T$ ergodic is the product of integrals of $f_i$'s, while for a general $T$ is the product of conditional expectations of $f_i$'s with respect to the $\sigma$-algebra of $T$-invariant sets.}

As we already mentioned, for averages with multiple w.m. $T_i$'s, to study \eqref{E:Multiple} and show convergence to the expected limit, one should first show a stronger result about ``nice'' subfamilies of the families of functions of interest.
Following the polynomial setting (\cite{CFH}), Frantzikinakis defined what a nice family of Hardy field functions is and showed 
 (in \cite[Theorem~2.3]{F3}) that \eqref{E:Multiple}, with $a_i(n)=[g_i(n)],$ where $g_\ell\prec\ldots\prec g_1\in \mathcal{H}\cap \mathcal{V},$\footnote{ $\mathcal{H}$ is a Hardy field and $\mathcal{V}=\{g\in C(\mathbb{R}^+):\;x^{k}\log x\prec g(x)\prec x^{k+1}$ for some $k\in \omega\}$.} also converges to the expected limit (i.e., to the product of the corresponding conditional expectations).

The generality of these results, i.e., their validity in any system, allows one to get various recurrence and combinatorial results obtaining interesting patterns on subsets of integers with positive (upper) density. Frantzikinakis, by applying equidistribution results on nilmanifolds for the corresponding appropriate classes of functions, showed that the limit of \eqref{E:Single} is equal to the one of \eqref{E:Furstenberg}. Then, by using Furstenberg's Correspondence Principle he obtained refinements of Szemer{\'e}di's theorem. More specifically, he did that for $a_i(x)=i[p(x)],$ where $p\in \mathbb{R}[x]$ is a real valued polynomial with  $p(x)\neq cq(x)+d,$ for all $c,d\in \mathbb{R}$ and $q\in \mathbb{Q}[x]$ (this follows by the proof of \cite[Theorem~2.2]{F2}--for a convergence result for general systems and single $T$ with iterates ``strongly independent polynomials'', see \cite{KK}); for $a_i(x)=i[g(x)],$ where $g$ is a Hardy field function of polynomial growth satisfying $\log x\prec |g(x)-cp(x)|$ for every $c\in \mathbb{R}$ and $p\in \mathbb{Z}[x]$ (\cite[Theorem~2.2]{F2}) and, finally, for $a_i(x)=i[g(x)],$ where $g$ is a tempered function from the class $\mathcal{T}$.\footnote{ This last claim follows from the results of \cite{F2} and the fact that a tempered function $g$ from $\mathcal{T}$ satisfies: $|g^{(k+1)}(x)|$ decreases to $0,$ $1/x^k\prec g^{(k)}(x)\prec 1$ and $(g^{(k+1)}(x))^k\prec (g^{(k)}(x))^{k+1}$ for some $k\in\mathbb{N}$ (see also Proposition~\ref{P:main_qaui} below).}

A result that is missing from the picture, and it is one among other general ones for ergodic averages that we are dealing with in this article, is to show that \eqref{E:Multiple}, for general systems, converges to the expected limit for tempered functions of different growth rates; adding one more class to the very short list of families of functions for which we have knowledge of such a limiting behavior. We are doing this by following the corresponding approach of the aforementioned results in the multiple transformations setting. This is a complementary work to the one of Frantzikinakis' from \cite{F2, F3, F4}, covering also some additional, 
for a single w.m. transformation, to \cite{BK} cases.
 
\subsection*{Notation} With $\N=\{1,2,\ldots\},$ $\omega=\N\cup\{0\},$ $\mathbb{Z},$ $\mathbb{Q},$ and $\mathbb{R}$ we denote the set of natural, whole, integer, rational and real numbers respectively. For a measurable function $f$ on a measure space $X$
with a transformation $T:X\to X,$ we denote with $Tf$ the composition $f\circ T.$  For $s\in \N,$ $\T^s =\R^s/\Z^s $ denotes the $s$ dimensional torus, and $(a(n))_n$ denotes a sequence indexed over the natural numbers (i.e., $(a(n))_{n\in \mathbb{N}}$).



\section{Main results}\label{results}

In this section we define the special classes of tempered functions that we work with (see also \cite{BK}) and state the main results of this article.

\begin{definition*}
Let $\mathcal{R}:=\Big\{g\in C^\infty(\mathbb{R}^+):\;\lim_{x\to\infty}\frac{xg^{(j+1)}(x)}{g^{(j)}(x)}\in \mathbb{R}\;\;\text{for all}\;\;j\in\omega\Big\};$

$\mathcal{F}:=\Big\{g\in C^\infty(\mathbb{R}^+):\;g\;\;\text{is Fej\'{e}r and}\;\;\exists\;\alpha\in (0,1]\;\;\text{such that}\;\;\lim_{x\to\infty}\frac{xg''(x)}{g'(x)}=\alpha-1\Big\};$

$\mathcal{T}_i:=\Big\{g\in\mathcal{R}:\;\exists\;i<\alpha\leq i+1,\;\lim_{x\to\infty}\frac{xg'(x)}{g(x)}=\alpha,\;\lim_{x\to\infty}g^{(i+1)}(x)=0\Big\};$ 

and $\mathcal{T}:=\bigcup_{i=0}^\infty \mathcal{T}_i.$
\end{definition*}




We will mainly work with the class of functions $\mathcal{T}.$ 
It is a known fact that every element of $\CT_i$ 
is a tempered function of degree $i$ (see \cite{BK}). Note that a big difference between Hardy field functions (where limits of ratios always exist) and tempered functions from $\CT$ is that in the latter case, since 
\[\frac{g_2^{(k+1)}(x)}{g_1^{(k+1)}(x)}=\frac{x g_2^{(k+1)}(x)}{g_2^{(k)}(x)}\cdot \frac{g_1^{(k)}(x)}{xg_1^{(k+1)}(x)} \cdot \frac{g_2^{(k)}(x)}{g_1^{(k)}(x)},\]
we might not be able to compare growth rates of derivatives of functions (the limit of $x g_1^{(k+1)}(x)/g_1^{(k)}(x)$ may be $0$). This will prevent us from having, even for functions of different growth rates from $\CT$, that non-trivial linear combinations of them are still in $\CT.$\footnote{ This is the main reason why in \cite{BK}, when dealing with Fej{\'e}r functions, the authors assume that the ratios are eventually monotone functions.} About the growth rates, for $g_1,$ $g_2$ real valued functions defined on some half-line $[x_0,\infty)$\footnote{ We denote with $\mathbb{R}^+$ any such half-line.} we recall that we write $g_2\prec g_1$ if $|g_1(x)|/|g_2(x)|\to \infty$ (equiv., $g_1(x)/g_2(x)\to 0$) as $x\to \infty.$

\medskip

In Section~\ref{vN}, exploiting the uniform distribution (or equidistribution) properties of tempered functions, we prove the following von Neumann-type result: 

\begin{theorem}\label{T:vNt}
For $\ell\in \mathbb{N}$ let $(X,\mathcal{B},\mu,T_1,\ldots,T_\ell)$ be a system and $f\in L^2(\mu).$ If $g_\ell\prec \ldots\prec g_1\in\mathcal{T}$ are such that $\sum_{i=1}^{\ell}\lambda_i g_i\in \CT$ for all $(\lambda_1,\ldots,\lambda_\ell)\in \mathbb{R}^\ell\setminus\{\vec{0}\},$\footnote{ Note that this condition is equivalent in saying that any non trivial linear combination of the $g_i$'s is still in $\mathcal{R}$ (this claim follows from III (iv) of the next section).} then \[\lim_{N\to\infty}\norm{\frac{1}{N}\sum_{n=1}^N T_1^{[g_1(n)]}\cdots T_\ell^{[g_\ell(n)]}f-Pf}_2=0,\footnote{ $[\cdot]$ is the integer part function, i.e., for $x\in \mathbb{R},$ $[x]$ is the largest integer which is $\leq x.$}\] where $P$ denotes the projection on the set $\{f\in L^2(\mu):\;T_i f=f$ for all $1\leq i\leq\ell\}.$
\end{theorem}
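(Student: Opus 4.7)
The plan is a spectral reduction of the $L^2$ convergence to an equidistribution statement that exploits the tempered hypothesis. Orthogonally decompose $L^2(\mu) = \mathcal{I} \oplus \mathcal{I}^\perp$, where $\mathcal{I} := \{f \in L^2(\mu): T_i f = f \text{ for all } 1 \leq i \leq \ell\}$ is the joint-invariants subspace and $P$ is the orthogonal projection onto $\mathcal{I}$. On $\mathcal{I}$ every summand equals $f = Pf$, so the averages trivially converge to $Pf$. It remains to prove that for $f \in \mathcal{I}^\perp$ the averages tend to $0$ in $L^2(\mu)$.

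For such $f$, the joint spectral theorem for the commuting unitaries $T_1,\ldots,T_\ell$ furnishes a finite positive measure $\sigma_f$ on $\T^\ell$ with $\sigma_f(\{\vec 0\}) = 0$ (equivalent to $f \in \mathcal{I}^\perp$) and the identity
\[
\left\|\tfrac{1}{N}\sum_{n=1}^{N} T_1^{[g_1(n)]}\cdots T_\ell^{[g_\ell(n)]}f\right\|_2^2 = \int_{\T^\ell} \left|\tfrac{1}{N}\sum_{n=1}^{N} \e\!\left(\textstyle\sum_{i=1}^{\ell}[g_i(n)]\theta_i\right)\right|^2 d\sigma_f(\vec\theta).
\]
Since the integrand is uniformly bounded by $1$, bounded convergence reduces the task to the pointwise estimate: for every $\vec\theta \in \T^\ell\setminus\{\vec 0\}$, $\tfrac{1}{N}\sum_{n=1}^{N}\e\bigl(\sum_i[g_i(n)]\theta_i\bigr) \to 0$.

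To prove this, lift $\vec\theta$ to $\vec\lambda \in \R^\ell \setminus \{\vec 0\}$ (permissible because $[g_i(n)] \in \Z$), and set $y_i(n) := \{g_i(n)\}$, $G := \sum_i \lambda_i g_i$; by hypothesis $G \in \CT$. Writing $[g_i(n)] = g_i(n) - y_i(n)$ recasts the exponential sum as $\tfrac{1}{N}\sum_n F(\vec y(n), G(n) \bmod 1)$, where $F \colon \T^{\ell+1} \to \C$ is the bounded, a.e.-continuous function $F(\vec y, z) = \e(z)\,\e\bigl(-\sum_i \lambda_i y_i\bigr)$ (interpret $y_i \in [0,1)$; its discontinuities lie on the measure-zero locus $\bigcup_i \{y_i = 0\}$). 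Applying Weyl's criterion, together with the tempered equidistribution fact that $\sum \mu_i g_i \bmod 1$ is equidistributed on $\T$ whenever $\vec\mu \neq \vec 0$ (cf.\ \cite{BK}), yields that the orbit $(\vec y(n), G(n) \bmod 1)$ is equidistributed on the closed subgroup $\mathcal{S} \leq \T^{\ell+1}$ whose annihilator is $\mathcal{S}^\perp = \{(\vec k, m) \in \Z^{\ell+1} : k_i + m\lambda_i = 0 \text{ for all } i\}$.

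The conclusion follows by splitting into two cases and computing $\int_\mathcal{S} F\,dm_\mathcal{S}$. If some $\lambda_i$ is irrational, then $\mathcal{S}^\perp = 0$, $\mathcal{S} = \T^{\ell+1}$, and $\int_{\T^{\ell+1}} F = \int_\T \e(z)\,dz \cdot \prod_i \int_0^1 \e(-\lambda_i y_i)\,dy_i = 0$ because $\int_\T \e(z)\,dz = 0$. If every $\lambda_i = p_i/q$ with $\gcd(p_1,\ldots,p_\ell,q) = 1$ and $q \geq 2$, then $\mathcal{S}^\perp = \Z \cdot (-p_1,\ldots,-p_\ell,q)$ and $\mathcal{S} = \{(\vec y, z) \in \T^{\ell+1} : qz \equiv \sum_i p_i y_i \pmod 1\}$; parametrising $\mathcal{S}$ by $(\vec y, j) \in \T^\ell \times \{0,1,\ldots,q-1\}$ with $z = (j + \sum_i p_i y_i)/q$ makes $F \equiv \e(j/q)$ on the $j$-th sheet, hence $\int_\mathcal{S} F\,dm_\mathcal{S} = \tfrac{1}{q}\sum_{j=0}^{q-1}\e(j/q) = 0$. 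Weyl's theorem for Riemann-integrable functions then delivers the required pointwise convergence. The main obstacle is the equidistribution of the orbit on $\mathcal{S}$, which is where the tempered equidistribution results invoked from \cite{BK} intervene; one also has to verify that the orbit visits the discontinuity locus $\{y_i = 0\}$ with density zero, which is a direct consequence of the equidistribution of each $g_i \bmod 1$.
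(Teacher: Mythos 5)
Your proposal is correct, and its skeleton coincides with the paper's: the same splitting of $L^2(\mu)$ into joint invariants and their orthocomplement, the same passage to a spectral measure on $\T^\ell$ via Bochner--Herglotz, and the same reduction (by dominated convergence, using $\sigma_f(\{\vec 0\})=0$) to showing that the exponential sums $\frac1N\sum_{n\le N}e^{2\pi i\sum_i[g_i(n)]\theta_i}$ vanish for $\vec\theta\neq\vec 0$. Where you genuinely diverge is in the proof of this last step, which is the paper's Proposition~\ref{P:vn}. The paper splits into cases according to whether each $\theta_i$ is irrational or rational non-integer, handling the rational case through the identity $[x]=[x/m]m+j$ and a partition of the torus into boxes. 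You instead write $[g_i(n)]=g_i(n)-\{g_i(n)\}$, pass to the joint orbit $(\{g_1(n)\},\dots,\{g_\ell(n)\},G(n))$ in $\T^{\ell+1}$ with $G=\sum_i\lambda_ig_i$, prove via Weyl's criterion that it equidistributes on the closed subgroup $\mathcal{S}$ you identify (every character sum reduces, exactly as in the paper, to the equidistribution mod $1$ of a nontrivial real linear combination of the $g_i$'s, which is where the hypothesis and Fact I~(iv) enter), and then compute $\int_{\mathcal{S}}F\,dm_{\mathcal{S}}=0$ in both the irrational and rational cases. This buys a single unified computation in place of the paper's three cases, and it handles all $\vec\theta\in\T^\ell\setminus\{\vec 0\}$ at once, including those with some zero coordinates, which the paper's proposition leaves to an implicit reduction to fewer functions. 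The price is the extra care you correctly flag: $F$ is only Riemann integrable on $\mathcal{S}$ because of the fractional-part discontinuities along $\bigcup_i\{y_i=0\}$, a closed $m_{\mathcal{S}}$-null set, so one must invoke the version of Weyl's theorem for Riemann-integrable test functions; your verification of $\mathcal{S}^\perp=\Z\cdot(-p_1,\dots,-p_\ell,q)$ in the rational case also silently uses $\gcd(p_1,\dots,p_\ell,q)=1$, which is worth stating. Both approaches rest on the same crux, namely that every nontrivial real linear combination of the $g_i$'s lies in $\mathcal{T}$ and is therefore equidistributed mod $1$.
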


Note here that if the $\mathbb{Z}^\ell$-action $T$ defined by $T^{(n_1,\ldots,n_\ell)}:=T_1^{n_1}\cdots T_\ell^{n_\ell}$ is ergodic, then the previous limit becomes the expected one as it equals to $\int f\;d\mu.$ 

Letting $f={\bf{1}}_A,$ using the relation 
\[\langle f, Pf\rangle=\langle f, P^2f\rangle=\langle Pf, Pf\rangle\geq (\mu(A))^2,\] we get the following corollary of Theorem~\ref{T:vNt}.

\begin{corollary}\label{C:sub1}
Under the assumptions of Theorem~\ref{T:vNt}, for every $A\in \mathcal{B}$ we have:
\[\lim_{N\to\infty}\frac{1}{N}\sum_{n=1}^N \mu(A\cap T_1^{-[g_1(n)]}\cdots T_\ell^{-[g_\ell(n)]}A)\geq (\mu(A))^2.\]
\end{corollary}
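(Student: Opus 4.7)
The plan is to derive the corollary directly from Theorem~\ref{T:vNt} by recasting the measure of the intersection as an inner product and invoking the $L^2$-convergence of the ergodic averages. First, I would set $f=\mathbf{1}_A\in L^2(\mu)$ and observe that, since each $T_i$ is measure preserving,
\[
\mu\bigl(A\cap T_1^{-[g_1(n)]}\cdots T_\ell^{-[g_\ell(n)]}A\bigr)
=\int \mathbf{1}_A\cdot \bigl(T_1^{[g_1(n)]}\cdots T_\ell^{[g_\ell(n)]}\mathbf{1}_A\bigr)\,d\mu
=\bigl\langle f,\; T_1^{[g_1(n)]}\cdots T_\ell^{[g_\ell(n)]}f\bigr\rangle.
\]
Averaging over $n$ and pulling $f$ out of the linear first slot of the inner product gives
\[
\frac{1}{N}\sum_{n=1}^N \mu\bigl(A\cap T_1^{-[g_1(n)]}\cdots T_\ell^{-[g_\ell(n)]}A\bigr)
=\Big\langle f,\; \frac{1}{N}\sum_{n=1}^N T_1^{[g_1(n)]}\cdots T_\ell^{[g_\ell(n)]}f\Big\rangle.
\]

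Next I would apply Theorem~\ref{T:vNt}: the averages on the right converge in $L^2(\mu)$ to $Pf$, where $P$ is the orthogonal projection onto the subspace of functions fixed by every $T_i$. By the Cauchy--Schwarz inequality the inner product is continuous in its second argument, so
\[
\lim_{N\to\infty}\frac{1}{N}\sum_{n=1}^N \mu\bigl(A\cap T_1^{-[g_1(n)]}\cdots T_\ell^{-[g_\ell(n)]}A\bigr)=\langle f,Pf\rangle.
\]

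Finally, I would exploit that $P$ is an orthogonal projection (hence self-adjoint and idempotent), which yields $\langle f,Pf\rangle=\langle f,P^2f\rangle=\langle Pf,Pf\rangle=\|Pf\|_2^2$. Since the constant function $1$ is invariant under each $T_i$, we have $\int Pf\,d\mu=\langle Pf,1\rangle=\langle f,P1\rangle=\langle f,1\rangle=\mu(A)$. The Cauchy--Schwarz inequality $\|Pf\|_2^2\ge\bigl(\int Pf\,d\mu\bigr)^2$ then gives the desired lower bound $(\mu(A))^2$.

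There is essentially no serious obstacle here: the corollary is a soft consequence of Theorem~\ref{T:vNt} combined with the standard properties of the orthogonal projection onto the invariant functions. The only minor point to be careful about is justifying that one may pass the $L^2$-limit through the inner product, which is immediate from the Cauchy--Schwarz inequality since $f=\mathbf{1}_A\in L^2(\mu)$.
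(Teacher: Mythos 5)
Your proposal is correct and follows essentially the same route as the paper: set $f=\mathbf{1}_A$, pass the $L^2$-limit from Theorem~\ref{T:vNt} through the inner product to get $\langle f,Pf\rangle$, and then use $\langle f,Pf\rangle=\norm{Pf}_2^2\geq(\int Pf\,d\mu)^2=(\mu(A))^2$. The paper states this almost verbatim (just more tersely), so there is nothing to add.
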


Using Furstenberg's correspondence principle,\footnote{ We are actually using here the following version of it: given $E\subseteq \mathbb{Z}^\ell$ there is a system $(X,\mathcal{B},\mu,T_1,\ldots,T_\ell)$ and $A\in \mathcal{B}$ with $\mu(A)=d^\ast(E):=\sup_{\{I_n\}_n}\limsup_{n\to\infty}(|E\cap I_n|/|I_n|),$ where the supremun is taken along all the parallelepipeds with $|I_n|\to \infty,$ such that for any $(n_1,\ldots,n_\ell)\in \mathbb{Z}^\ell$ we have  $d^\ast(E\cap(E-(n_1,\ldots,n_\ell)))\geq\mu(A\cap T_1^{-n_1}\cdots T_\ell^{-n_\ell}A).$} Corollary~\ref{C:sub1} implies:

\begin{corollary}\label{C:sub2}
Let $g_1,\ldots,g_\ell$ be as in Theorem~\ref{T:vNt}. Then for every $E\subseteq \mathbb{Z}^\ell$ we have
\[\liminf_{N\to\infty}\frac{1}{N}\sum_{n=1}^N d^\ast(E\cap(E-([g_1(n)],\ldots,[g_\ell(n)])))\geq (d^\ast(E))^2.\]
\end{corollary}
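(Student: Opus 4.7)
My plan is to derive Corollary~\ref{C:sub2} by pairing the version of Furstenberg's correspondence principle quoted in the footnote with Corollary~\ref{C:sub1}; essentially all of the dynamical work has been absorbed into Theorem~\ref{T:vNt}.

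First, given $E\subseteq\mathbb{Z}^\ell$ I invoke the correspondence principle to produce a system $(X,\mathcal{B},\mu,T_1,\ldots,T_\ell)$ together with a set $A\in\mathcal{B}$ satisfying $\mu(A)=d^\ast(E)$ and the pointwise estimate
\[
d^\ast\bigl(E\cap(E-(n_1,\ldots,n_\ell))\bigr)\;\geq\;\mu\bigl(A\cap T_1^{-n_1}\cdots T_\ell^{-n_\ell}A\bigr)
\]
for every $(n_1,\ldots,n_\ell)\in\mathbb{Z}^\ell$. I then specialise to the iterates $n_i=[g_i(n)]$ and average over $n=1,\ldots,N$, obtaining
\[
\frac{1}{N}\sum_{n=1}^N d^\ast\bigl(E\cap(E-([g_1(n)],\ldots,[g_\ell(n)]))\bigr)\;\geq\;\frac{1}{N}\sum_{n=1}^N \mu\bigl(A\cap T_1^{-[g_1(n)]}\cdots T_\ell^{-[g_\ell(n)]}A\bigr).
\]
Taking $\liminf_{N\to\infty}$ on both sides, Corollary~\ref{C:sub1} applied to the system produced above yields the lower bound $(\mu(A))^2$ on the right-hand side, and since $\mu(A)=d^\ast(E)$, this is precisely $(d^\ast(E))^2$, as required.

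There is no real obstacle here beyond knowing that the hypotheses of Theorem~\ref{T:vNt} transfer verbatim to the system supplied by the correspondence principle; the growth/independence assumptions on $g_1,\ldots,g_\ell$ are properties of the functions alone and so are preserved. The only minor technical point is to use the precise form of the correspondence principle stated in the footnote (which allows $d^\ast$ to be realised as a measure through parallelepipeds in $\mathbb{Z}^\ell$), rather than its more standard $\mathbb{Z}$-action version, so that the $\ell$ commuting transformations $T_1,\ldots,T_\ell$ are available simultaneously.
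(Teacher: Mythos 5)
Your proposal is correct and is exactly the paper's argument: the paper deduces Corollary~\ref{C:sub2} directly from Corollary~\ref{C:sub1} via the version of Furstenberg's correspondence principle quoted in the footnote, just as you do.
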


We remark at this point that someone (as in \cite[Theorem~4.3 and Corollary~4.2]{BKS} but for averages along $\N$) can have the corresponding to Corollaries~\ref{C:sub1} and \ref{C:sub2} results for $m$-tuples $(\psi_1,\ldots,\psi_m)=L([g_1],\ldots,[g_\ell]),$ where $L:\mathbb{Z}^\ell\to\mathbb{Z}^m$ is a linear transformation.

\medskip

In Section~\ref{slF} we deal with subclasses of Fej{\'e}r functions.
We show the following result for functions from the class $\mathcal{F}$ which, as in the corresponding Hardy field case (see \cite[Theorem~2.7]{F2}), surprisingly enough, holds without any commutativity assumption on the transformations.

\begin{theorem}\label{T:sub-linear}
For $\ell\in \mathbb{N}$ let $(X,\mathcal{B},\mu,T_i),$ $1\leq i\leq \ell,$ be systems and $f_1,\ldots, f_\ell\in L^\infty(\mu).$ If  $g_\ell\prec \ldots\prec g_1\in \CF$ with $(g'_{i+1}(x)/g'_i(x))$ eventually monotone for all $1\leq i\leq \ell-1,$ 
then we have that \[\lim_{N\to\infty}\norm{\frac{1}{N}\sum_{n=1}^N T^{[g_1(n)]}_1 f_1\cdots T^{[g_\ell(n)]}_\ell f_\ell -\prod_{i=1}^\ell\mathbb{E}(f_i|\mathcal{I}(T_i))}_2=0.\footnote{  $\mathbb{E}(f_i|\mathcal{I}(T_i))$ denotes the conditional expectation of $f_i$ with respect to the $\sigma$-algebra $\mathcal{I}(T_i)$ of the $T_i$-invariant sets (see also Subsection~\ref{Ss:semi}). We remark that the conclusion of Theorem~\ref{T:sub-linear} holds also for functions $g_\ell\prec \ldots\prec g_1\in\mathcal{T}_0$, without any additional assumption on the ratios of the derivatives (see remark after the proof in Section~\ref{slF}).}\]
\end{theorem}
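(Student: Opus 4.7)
My plan is to induct on $\ell$ after first reducing to the case where at least one $f_{i_0}$ satisfies $\mathbb{E}(f_{i_0}|\mathcal{I}(T_{i_0}))=0$. Since each $\mathbb{E}(f_i|\mathcal{I}(T_i))$ is $T_i$-invariant, it is untouched by $T_i^{[g_i(n)]}$ for every $n$, so writing $f_i=\mathbb{E}(f_i|\mathcal{I}(T_i))+f_i'$ with $f_i'$ orthogonal to the $T_i$-invariant sector, and expanding multilinearly, the ``main'' term is exactly $\prod_i\mathbb{E}(f_i|\mathcal{I}(T_i))$ at every $n$, while the remaining terms each contain some $f_{i_0}'$; it suffices to show that the Ces\`aro averages of such mixed products vanish in $L^2(\mu)$. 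The base case $\ell=1$ is the classical Fej\'er ergodic theorem, $\frac{1}{N}\sum_{n=1}^N T^{[g(n)]}f\to \mathbb{E}(f|\mathcal{I}(T))$ in $L^2(\mu)$, valid for any $g\in\mathcal{F}$.

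For the inductive step I will exploit the slowest-growing function $g_\ell$ to partition $\mathbb{N}$ into intervals on which $T_\ell^{[g_\ell(n)]}f_\ell$ is constant. Since $g_\ell\in\mathcal{F}$ has eventually monotone $g_\ell'\downarrow 0$ with $xg_\ell'(x)\to\infty$, the function $g_\ell$ is eventually strictly monotone and the level sets $B_k:=\{n:[g_\ell(n)]=k\}$ are, for large $k$, intervals of integers of length $|B_k|\sim 1/g_\ell'(a_k)\to\infty$, where $a_k:=\min B_k$. This gives the decomposition
\[
\frac{1}{N}\sum_{n=1}^N\prod_{i=1}^\ell T_i^{[g_i(n)]}f_i \;=\; \frac{1}{N}\sum_k (T_\ell^k f_\ell)\sum_{n\in B_k\cap[1,N]}\prod_{i=1}^{\ell-1}T_i^{[g_i(n)]}f_i.
\]
The heart of the argument is to show that the block-normalized inner averages $|B_k|^{-1}\sum_{n\in B_k}\prod_{i<\ell}T_i^{[g_i(n)]}f_i$ converge in $L^2(\mu)$ to $\prod_{i<\ell}\mathbb{E}(f_i|\mathcal{I}(T_i))$ as $k\to\infty$. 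After the change of variables $n=m+a_k$, these become averages of $\prod_{i<\ell}T_i^{[\widetilde g_i(m)]}f_i$ over $m\in[0,|B_k|)$, where $\widetilde g_i(m):=g_i(m+a_k)$ still satisfies $\widetilde g_{\ell-1}\prec\cdots\prec\widetilde g_1\in\mathcal{F}$ with the monotone-ratio property preserved (these being asymptotic invariants of the $g_i$). Applying the inductive hypothesis in a form uniform in the starting point $a_k$ yields the desired block convergence. Substituting this back, and invoking the base case applied to $T_\ell$ and $g_\ell$ to evaluate $\frac{1}{N}\sum_{n=1}^N T_\ell^{[g_\ell(n)]}f_\ell\to\mathbb{E}(f_\ell|\mathcal{I}(T_\ell))$, produces the full limit $\prod_{i=1}^\ell\mathbb{E}(f_i|\mathcal{I}(T_i))$.

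The principal obstacle will be establishing the required uniformity in the inductive step: because $a_k/b_k\to 1$ (the blocks $B_k=[a_k,b_k]$ are ``thin'' relative to their location), one cannot deduce block convergence from the plain $[1,N]$-version of the inductive statement alone. I expect to handle this by proving the stronger form of the theorem (convergence of averages along intervals $[a+1,a+L]$ as $L\to\infty$, uniformly in $a\in\mathbb{N}$) from the outset, using the monotonicity of the ratios $g_{i+1}'/g_i'$ to control the local behavior of the $g_i$ on each block and thereby ensure that the shifted functions $\widetilde g_i$ continue to satisfy the hypotheses needed to invoke the inductive step. The finitely many small-$k$ blocks (where $|B_k|$ has not yet grown large) are absorbed by the trivial $\|f_i\|_\infty$ bounds and contribute $o(1)$ as $N\to\infty$.
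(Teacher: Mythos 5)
Your overall mechanism is the same one the paper uses --- a change of variables that linearizes one of the iterates, local constancy of the remaining iterates on long blocks, and von Neumann's \emph{uniform} mean ergodic theorem at the core --- but you pivot on the slowest function $g_\ell$ (partitioning $\N$ into its level sets $B_k$), whereas the paper pivots on the fastest one, composing with $g_1^{-1}$ so that the \emph{other} iterates $g_i\circ g_1^{-1}$ become locally constant and the whole statement reduces to a lemma about arbitrary sequences $A_i(n)$ of bounded functions whose uniform Ces\`aro averages converge (Lemma~\ref{L:sub-linear}, applied with $A_i(n)=T_i^nf_i$). This inversion is not cosmetic: it is exactly where your inductive step breaks.

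The gap is that the strengthening you propose to prove --- convergence of $\frac{1}{L}\sum_{n=a+1}^{a+L}\prod_{i<\ell}T_i^{[g_i(n)]}f_i$ to $\prod_{i<\ell}\mathbb{E}(f_i|\mathcal{I}(T_i))$ as $L\to\infty$, \emph{uniformly in} $a\in\N$ --- is false already for a single function. Take $g_1(x)=\sqrt{x}\in\CF$, $a=k^2$, $L=2k$: then $[g_1(n)]=k$ on all of $(a,a+L]$, so the average equals $T_1^kf_1$, which does not tend to $\mathbb{E}(f_1|\mathcal{I}(T_1))$ (irrational rotation, nonconstant character). The same phenomenon shows that your shifted functions $\tilde g_i(m)=g_i(m+a_k)$ are not Fej\'er functions ``on the window $[0,|B_k|)$'' in any sense usable by the inductive hypothesis --- on that window they are essentially affine with slope $g_i'(a_k)\to0$, and the relations $\tilde g_{\ell-1}\prec\cdots\prec\tilde g_1$ are asymptotic statements that say nothing about a bounded window whose location $a_k$ runs to infinity. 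What is true, and what you actually need, is uniformity only over windows on which the \emph{fastest} remaining function varies unboundedly, i.e.\ $g_1(a+L)-g_1(a)\to\infty$; on the blocks $B_k$ of $g_\ell$ this does hold, since $|B_k|\,g_1'(a_k)\sim g_1'(a_k)/g_\ell'(a_k)\to\infty$ (this is where the eventual monotonicity of the ratios of derivatives is used, to pass from $g_\ell\prec g_1$ to $g_\ell'\prec g_1'$). The clean way to encode this quantified uniformity is the paper's formulation: prove the statement for sequences $A_i(n)$ with uniformly convergent Ces\`aro averages, linearize via $g_1^{-1}$ using $[g_i(n)]=[\,(g_i\circ g_1^{-1})([g_1(n)])\,]$ on a set of density one, and let the innermost averages be genuine uniform Ces\`aro averages of $T_1^jf_1$. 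Your base case $\ell=1$ and the multilinear reduction via $f_i=\mathbb{E}(f_i|\mathcal{I}(T_i))+f_i'$ are fine, but as written the inductive step rests on a false uniform statement and does not go through.
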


\medskip

In Section~\ref{wm}, we prove the following result for $\mathbb{Z}^\ell$-actions on nice families of tempered functions (nice families are defined in Section~\ref{S:nice} while the seminorms in Subsection~\ref{Ss:semi}):

\begin{proposition}\label{P:1}
For $\ell, m\in \mathbb{N}$ let $(X,\mathcal{B},\mu,T_1,\ldots,T_\ell)$ be a system, $f_1,\ldots,f_m\in L^\infty(\mu)$  and $((g_{1,1},\ldots,g_{\ell,1}),\ldots,(g_{1,m},\ldots,g_{\ell,m}))$ a nice ordered family of $\ell$-tuples of functions with $\deg g_{1,1}= d\in \omega.$ There exists $k\equiv k(d,\ell,m)\in\mathbb{N}$ such that if $\nnorm{f_1}_{k,T_1}=0,$ then
\begin{equation}\label{Prop: nice}
\lim_{N\to\infty}\sup_{E\subseteq \mathbb{N}}\norm{\frac{1}{N}\sum_{n=1}^N\prod_{j=1}^m (T_1^{[g_{1,j}(n)]}\cdots T_\ell^{[g_{\ell,j}(n)]})f_j\cdot {\bf{1}}_E(n)}_2=0.
\end{equation}
\end{proposition}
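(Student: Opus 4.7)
The plan is to prove Proposition~\ref{P:1} by PET induction on the \emph{type} of the nice ordered family of $\ell$-tuples, with van der Corput's inequality (vdC) as the principal reduction tool. At each step the type strictly decreases in the PET well-ordering while niceness is preserved; at the base case the average reduces to one controlled by $\nnorm{f_1}_{k,T_1}$ for some $k = k(d,\ell,m)$, via the equidistribution results on nilmanifolds for tempered iterates developed in Section~\ref{wm}.

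First, I set $v_n := \prod_{j=1}^m (T_1^{[g_{1,j}(n)]} \cdots T_\ell^{[g_{\ell,j}(n)]}) f_j$, a bounded $L^2$-valued sequence. Applying vdC to $v_n\one_E(n)$ and using the identity $\one_E(n+h)\one_E(n) = \one_{E\cap(E-h)}(n)$, I obtain
\[
\sup_{E \subseteq \N}\norm{\frac{1}{N}\sum_{n=1}^N v_n\one_E(n)}_2^2 \leq \frac{2}{H}\sum_{h=1}^H \limsup_N \sup_{F \subseteq \N}\Big|\frac{1}{N}\sum_{n=1}^N \langle v_{n+h}, v_n\rangle \one_F(n)\Big| + o_H(1),
\]
so uniformity over subsets of $\N$ is preserved by the reduction (since $E\cap(E-h)$ is just another subset of $\N$). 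The inner product $\langle v_{n+h}, v_n\rangle$, after expansion, is a correlation that can be re-packaged as an average of the form on the left of \eqref{Prop: nice} but with $2m$ tuples whose iterates have the shape $[g_{i,j}(n+h)] - [g_{i,j'}(n)]$.

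Next, the PET convention selects a ``leading'' function $g_{i_0,j_0}$ and differences all iterates against $[g_{i_0,j_0}(n)]$. Using the derivative asymptotics of $\CT$ (each derivative of strictly lower growth than the previous), Taylor expansion yields approximations such as $g(n+h) - g(n) = hg'(n) + O(h^2|g''(n)|)$; the integer-part corrections are handled by partitioning $\N$ into the finitely many level sets on which the bounded error vector is constant and absorbing the resulting bounded exponents into the $f_j$'s. The new family then has its leading iterate replaced by $[hg'_{i_0,j_0}(n)]$, a strictly lower-degree tempered function, and the niceness definition of Section~\ref{S:nice} is tailored precisely so that this new ordered family is again nice and of strictly smaller PET type. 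Iterating at most $k_0 = k_0(d,\ell,m)$ times, one reaches a base case in which the only nontrivial iterate acting on $f_1$ is $T_1^{[g(n)]}$ for a single $g \in \CT$ (all remaining factors being bounded measurable functions of $n$), and the resulting average is controlled by $\nnorm{f_1}_{k,T_1}$ via the nilmanifold equidistribution results of Section~\ref{wm}, in analogy with \cite{F3}.

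The main obstacle will be preserving niceness through each vdC step. Unlike the Hardy field setting, where ratios of functions always have a limit, $\CT$ is not closed under linear combinations and one cannot track functions merely by their leading degree; one must carry the refined structural data encoded in the nice-family axioms, which is the tempered-function analog of \cite{CFH} and \cite{F3} and is developed in Section~\ref{S:nice}. A secondary, more routine, technical point is ensuring that the bounded integer-part errors do not accumulate uniformly over $h \in \{1,\ldots,H\}$, so that the induction terminates in finitely many steps; this follows from uniform derivative asymptotics for functions in $\CT$.
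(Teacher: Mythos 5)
Your overall architecture (vdC plus PET induction on a complexity that decreases while niceness is preserved, with the seminorm hypothesis propagated to product systems) matches the paper's inductive step, and your attention to preserving the $\sup_E$ and to the failure of $\CT$ to be closed under linear combinations is on target. The genuine gap is in your base case. PET does not terminate with ``the only nontrivial iterate acting on $f_1$ is $T_1^{[g(n)]}$, all remaining factors bounded'': the weight reduction stops when the leading function $g_{1,1}$ lands in $\CT_0$, i.e.\ is sublinear but still unbounded, and at that point all $m$ tuples generically still carry unbounded iterates. The paper's base case therefore requires a separate argument that your proposal does not contain: (i) write $[g_{1,j}(n)]=[\alpha_j[g_{1,1}(n)]]+e_{1,j}(n)$ and $[g_{i,j}(n)]=[\tilde g_{i,j}([g_{1,1}(n)])]+e_{i,j}(n)$ with $\tilde g_{i,j}=g_{i,j}\circ g_{1,1}^{-1}$ and bounded integer errors, frozen by passing to the product system (Lemma~\ref{L:4}); (ii) perform the change of variables $n\mapsto[g_{1,1}(n)]$ via Lemma~\ref{L:sub-linear} (which rests on \cite[Theorem~3.5]{BK}), converting the sublinear iterates of $T_1$ into genuinely \emph{linear} iterates $[\alpha_j n]$ averaged over intervals $I_N$ of growing length; (iii) decompose each $I_N$ into subintervals on which the slower iterates $[\tilde g_{i,j}(n)]$ are constant; and only then (iv) invoke the linear-iterates seminorm estimate Lemma~\ref{L:2} to get control by $\nnorm{\tilde f_1}_{2m,\tilde T_1}$, which vanishes because $\nnorm{f_1}_{2m+1,T_1}=0$.

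Relatedly, your claim that the base case is ``controlled by $\nnorm{f_1}_{k,T_1}$ via the nilmanifold equidistribution results'' points to the wrong mechanism: the proof of Proposition~\ref{P:1} uses no nilmanifold input at all (the equidistribution results of Section~\ref{S:equi} enter only later, in the proof of Theorem~\ref{T:main_general}); the seminorm control for linear iterates comes from Lemma~\ref{L:2}, imported from \cite{F3}. Without the change-of-variables step and the linear-iterates lemma, your induction has no valid terminal case, so the argument as written does not close.
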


This result covers the weakly mixing case:

\begin{corollary}\label{cor:wm}
For $\ell\in \mathbb{N}$ let $(X,\mathcal{B},\mu,T_1,\ldots,T_\ell)$ be a weakly mixing system,\footnote{ By this we mean that the $T_i$'s commute and are weakly mixing.} $f_1,\ldots,f_\ell\in L^\infty(\mu)$ and $g_\ell\prec\ldots\prec g_1\in \mathcal{T}.$ Then we have that \[\lim_{N\to\infty}\norm{\frac{1}{N}\sum_{n=1}^N T_1^{[g_1(n)]}f_1\cdots T_\ell^{[g_\ell(n)]}f_\ell-\prod_{i=1}^\ell \int f_i\; d\mu}_2=0.\]
\end{corollary}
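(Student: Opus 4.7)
The plan is to induct on $\ell$, peeling off $f_1$ at each step and combining Proposition~\ref{P:1} with the triviality of the Host--Kra factors of a weakly mixing transformation. The base case $\ell=1$ is immediate from Theorem~\ref{T:vNt}: the limit equals $Pf_1$, and for a weakly mixing $T_1$ the only $T_1$-invariant functions are constants, so $Pf_1=\int f_1\,d\mu$.

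For the inductive step, decompose $f_1=c_1+h_1$ with $c_1=\int f_1\,d\mu$ and $\int h_1\,d\mu=0$. Since $T_1^{[g_1(n)]}c_1=c_1$, the contribution of the constant piece equals
\[
c_1\cdot\frac{1}{N}\sum_{n=1}^N T_2^{[g_2(n)]}f_2\cdots T_\ell^{[g_\ell(n)]}f_\ell,
\]
to which the inductive hypothesis applies (the $T_2,\ldots,T_\ell$ are commuting and weakly mixing, and $g_\ell\prec\ldots\prec g_2\in\mathcal{T}$), producing the $L^2$-limit $c_1\prod_{i=2}^\ell\int f_i\,d\mu=\prod_{i=1}^\ell\int f_i\,d\mu$. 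It thus suffices to show that the corresponding average with $h_1$ in place of $f_1$ tends to $0$ in $L^2$.

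To do this, I would view the product as being over $m=\ell$ terms and take the $j$-th $\ell$-tuple of iterates to be $(0,\ldots,0,g_j,0,\ldots,0)$, with $g_j$ in position $j$; then the first tuple has leading entry $g_1$ of degree $d=\deg g_1$. Provided this ``diagonal'' family is a nice ordered family in the sense of Section~\ref{S:nice}, Proposition~\ref{P:1} furnishes $k=k(d,\ell,\ell)$ such that $\nnorm{h_1}_{k,T_1}=0$ forces the average to tend to $0$ in $L^2$ (even uniformly in $E\subseteq\mathbb{N}$, which is stronger than what we need). Weak mixing of $T_1$ makes all of its Host--Kra factors trivial, so every Host--Kra seminorm of the mean-zero function $h_1$ vanishes, and the hypothesis of Proposition~\ref{P:1} is met.

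The main obstacle is the verification of niceness. The strict growth hierarchy $g_1\succ\ldots\succ g_\ell$ in $\mathcal{T}$ -- which, as in Theorem~\ref{T:vNt}, is already what rules out non-trivial asymptotic dependencies among the $g_j$'s -- should be precisely the input built into the definition of a nice ordered family; but pinning this down against the actual PET-type conditions requires unpacking the definition given in Section~\ref{S:nice} and checking that each of its clauses is automatic for diagonal $\ell$-tuples drawn from $\mathcal{T}$.
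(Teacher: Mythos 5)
Your proof is correct and is essentially the paper's own argument: the paper derives the corollary from Proposition~\ref{P:1} applied to exactly the diagonal nice family $\mathcal{A}_1=(g_1,0,\ldots,0),\ldots,\mathcal{A}_\ell=(0,\ldots,0,g_\ell)$ (yielding its Proposition~\ref{P:wm_standard}) and then invokes the same ``standard classical argument'' that you spell out (decompose off the mean, use that weak mixing trivializes the Host--Kra seminorms of mean-zero functions, and induct on $\ell$). The niceness verification you defer is routine: since $0\in\mathcal{C}(g_i)$, all three clauses of the definition reduce to $1\prec g_1$ and $g_i\prec g_1$ for $i\geq 2$, both immediate for functions in $\mathcal{T}$ with the given growth hierarchy.
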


We note that the previous result, even though doesn't cover all the cases of \cite[Theorem~B]{BK} for a single transformation $T$ (for example, we don't deal with polynomial iterates) it does cover some additional ones (see Section~\ref{wm} below for more details). 

\medskip

Adding (as in Theorem~\ref{T:vNt}) the assumption that the functions $g_\ell\prec\ldots\prec g_1\in \CT$ are such that any non-trivial linear combination of them is still in $\CT$, fact that will allow us to obtain equidistribution results (analogous to those from \cite{F4}) in Section~\ref{S:equi},  we get the main result of the article (which we prove in Section~\ref{main}), i.e., that the limit of the average of interest is the expected one:

\begin{theorem}\label{T:main_general}
For $\ell\in \mathbb{N}$ let $(X,\mathcal{B},\mu,T_1,\ldots,T_\ell)$ be a system and $f_1,\ldots,f_\ell\in L^\infty(\mu).$  If $g_\ell\prec \ldots\prec g_1\in\mathcal{T}$ are such that $\sum_{i=1}^{\ell}\lambda_i g_i\in \CT$ for all $(\lambda_1,\ldots,\lambda_\ell)\in \mathbb{R}^\ell\setminus\{\vec{0}\},$ then  \[\lim_{N\to\infty}\norm{\frac{1}{N}\sum_{n=1}^N T_1^{[g_1(n)]}f_1\cdots T_\ell^{[g_\ell(n)]}f_\ell-\prod_{i=1}^\ell \mathbb{E}(f_i|\mathcal{I}(T_i))}_2=0.\]
\end{theorem}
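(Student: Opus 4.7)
The plan is to combine the PET-based seminorm control of Proposition~\ref{P:1} with equidistribution results on nilmanifolds developed in Section~\ref{S:equi}, following the scheme Frantzikinakis used in the Hardy field setting. Denote the average of interest by $A_N(f_1,\ldots,f_\ell)$. The overall strategy has three steps: first, reduce to the case where each $f_i$ is measurable with respect to a Host--Kra pro-nilfactor of $(X,T_i)$; second, model $T_i^{[g_i(n)]}f_i$ on that pro-nilfactor by a nilsequence; third, invoke joint equidistribution of $([g_1(n)],\ldots,[g_\ell(n)])$ on the product nilmanifold to factor the limit.

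For the reduction, I apply Proposition~\ref{P:1} with $m=1$ to obtain an integer $k_1 = k(\deg g_1,\ell,1)$ such that $\nnorm{f_1}_{k_1,T_1}=0$ forces $A_N\to 0$ in $L^2(\mu)$. After re-indexing the tuple $(g_1,\ldots,g_\ell)$ together with the corresponding permutation of the transformations (this preserves both the growth ordering up to rescaling and the hypothesis that all non-trivial linear combinations remain in $\CT$), the same PET scheme yields, for every $i\in\{1,\ldots,\ell\}$, an integer $k_i$ such that $\nnorm{f_i}_{k_i,T_i}=0$ implies $A_N\to 0$. By the Host--Kra structure theorem, each seminorm $\nnorm{\cdot}_{k_i,T_i}$ is controlled by the projection onto an appropriate pro-nilfactor $Z_{k_i-1}(T_i)$; decomposing $f_i = \mathbb{E}(f_i\mid Z_{k_i-1}(T_i)) + r_i$ and multilinearly expanding $A_N$, every term containing at least one $r_j$ contributes $0$ in the limit. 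A standard $L^2$-approximation then reduces the problem to the case where $T_i^n f_i(x) = F_i(b_i^n\cdot x_i)$ for some nilmanifold $X_i = G_i/\Gamma_i$, continuous $F_i$, $b_i\in G_i$ and base point $x_i\in X_i$ (with the data $(G_i,\Gamma_i,b_i,x_i)$ depending measurably on $x$).

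It then remains, for a.e.\ $x$, to evaluate
\[
\lim_{N\to\infty}\frac{1}{N}\sum_{n=1}^N F_1(b_1^{[g_1(n)]}\cdot x_1)\cdots F_\ell(b_\ell^{[g_\ell(n)]}\cdot x_\ell),
\]
which is precisely the context of the equidistribution results of Section~\ref{S:equi}. The hypothesis that every non-trivial real linear combination $\sum\lambda_i g_i$ lies in $\CT$ serves as the ``non-resonance'' input needed to upgrade separate equidistribution of the single sequence $[g_i(n)]$ on each $X_i$ to joint equidistribution of the $\ell$-tuple on the product $X_1\times\cdots\times X_\ell$. The limit then factors as a product, over $i$, of Haar integrals of $F_i$ over the orbit closure of $x_i$ under $b_i^{\Z}$; by the pointwise ergodic theorem on each nilfactor, each such integral equals $\mathbb{E}(f_i\mid\mathcal{I}(T_i))(x)$, which yields the desired conclusion.

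The main obstacle is exactly this joint equidistribution step on the product nilmanifold. In contrast to the Hardy case, the class $\CT$ is not closed under addition, so the only leverage for ruling out diagonal resonances between the separate nilorbits $(b_i^{[g_i(n)]}\cdot x_i)_n$ is the standing algebraic assumption on linear combinations. Translating this hypothesis into a genuine Weyl-type equidistribution statement on products of nilmanifolds --- the tempered-function analogue of the results of \cite{F4} --- is the technical heart of the proof, and is by design relegated to Section~\ref{S:equi}; once those tools are in hand, the assembly described above is essentially formal.
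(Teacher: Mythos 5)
Your overall architecture (seminorm reduction, decomposition into structured plus uniform parts, joint equidistribution on a product nilmanifold) matches the paper's, but there are two genuine gaps.

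First, the re-indexing argument is wrong. Proposition~\ref{P:1} controls the average by $\nnorm{f_1}_{k,T_1}$ specifically because $g_{1,1}$ is the \emph{fastest-growing} iterate: condition (ii) of a nice family demands $g_{i,j}\prec g_{1,1}$ for $i\geq 2$, and the whole PET/weight machinery of Section~\ref{S:nice} is anchored on that dominance. If you permute the tuple so that some slower $g_i$ sits in the first slot, the family is no longer nice (the original $g_1$ now violates (ii)), and the induction does not run. Permutation does not ``preserve the growth ordering up to rescaling'' --- it destroys it. Upgrading the control from $\nnorm{f_1}_{k,T_1}=0$ to $\nnorm{f_i}_{k,T_i}=0$ for an arbitrary $i$ is precisely the content of Proposition~\ref{p: general seminorm}, and the paper obtains it not by symmetry but by following Frantzikinakis' correlation-estimate machinery (dual sequences and a weak decomposition, \cite[Sections 6 and 7]{F3}). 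Your proposal substitutes a one-line symmetry claim for this step, and that claim is false.

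Second, you never separate the sublinear case. The equidistribution results of Section~\ref{S:equi} (Theorem~\ref{T:main_equi}, Proposition~\ref{P:main_qaui}) and the seminorm estimates of Propositions~\ref{p: general seminorm} and \ref{p:general with R} are all stated for $g_i\in\bigcup_{i\geq 1}\mathcal{T}_i$, i.e., superlinear functions; they do not apply when some $g_i\in\mathcal{T}_0$. The paper's proof splits into three claims: all $g_i$ sublinear (handled by Theorem~\ref{T:sub-linear} and its $\mathcal{T}_0$ remark, with no nilmanifold input at all), all superlinear (your scheme, via the strong decomposition Theorem~\ref{t: strong decomposition} and \eqref{E:equi_1}), and the mixed case, which needs the additional device of averaging over blocks $Rn+r$ so that the sublinear iterates are essentially constant on each block, combined with Proposition~\ref{p:general with R} and \eqref{E:equi_2}. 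Without this case analysis the argument as written does not cover, for example, $\ell=2$ with $g_2(x)=\sqrt{x}$.
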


Because of the generality of Theorem~\ref{T:main_general}, being valid for all systems (as Theorems~\ref{T:vNt} and \ref{T:sub-linear}), one can easily get various recurrence, combinatorial and topological dynamical applications. While Theorem~\ref{T:main_general} implies all the corresponding applications from \cite[Section~2]{F3}, we chose to indicatively state two of them (see \cite[Corollary~2.5 and Theorem~2.6]{F3}):

\begin{corollary}\label{C:gen1}
Under the assumptions of Theorem~\ref{T:main_general}, for every $A\in \mathcal{B}$ we have:
\[\lim_{N\to\infty}\frac{1}{N}\sum_{n=1}^N \mu(A\cap T_1^{-[g_1(n)]}A\cap\cdots\cap T_\ell^{-[g_\ell(n)]}A)\geq (\mu(A))^{\ell+1}.\]
\end{corollary}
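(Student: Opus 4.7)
The plan is to reduce Corollary~\ref{C:gen1} to Theorem~\ref{T:main_general} by specializing all the test functions to $\mathbf{1}_A$, to convert the resulting $L^2$-limit into a numerical one by pairing against $\mathbf{1}_A$, and to close with a standard lower bound for the integral of an indicator against the product of its conditional expectations onto the invariant factors.

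First I would apply Theorem~\ref{T:main_general} with $f_1=\cdots=f_\ell=\mathbf{1}_A\in L^\infty(\mu)$: the hypotheses on the tempered functions $g_1,\dots,g_\ell$ are identical to those of the corollary, so the theorem delivers
\[
\lim_{N\to\infty}\norm{\frac{1}{N}\sum_{n=1}^N T_1^{[g_1(n)]}\mathbf{1}_A\cdots T_\ell^{[g_\ell(n)]}\mathbf{1}_A-\prod_{i=1}^\ell\mathbb{E}(\mathbf{1}_A\mid\mathcal{I}(T_i))}_2=0.
\]
Next I would take the inner product with $\mathbf{1}_A\in L^2(\mu)$; since $L^2$-convergence of a sequence forces the corresponding inner products against any fixed $L^2$-vector to converge (Cauchy--Schwarz), this turns the above $L^2$-limit into a limit of numbers. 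Using commutativity of the $T_i$ and the fact that they preserve $\mu$, the integrand factors as
\[
\int\mathbf{1}_A\cdot T_1^{[g_1(n)]}\mathbf{1}_A\cdots T_\ell^{[g_\ell(n)]}\mathbf{1}_A\;d\mu=\mu\bigl(A\cap T_1^{-[g_1(n)]}A\cap\cdots\cap T_\ell^{-[g_\ell(n)]}A\bigr),
\]
so the limit of the numerical averages exists and
\[
\lim_{N\to\infty}\frac{1}{N}\sum_{n=1}^N\mu\bigl(A\cap T_1^{-[g_1(n)]}A\cap\cdots\cap T_\ell^{-[g_\ell(n)]}A\bigr)=\int\mathbf{1}_A\prod_{i=1}^\ell\mathbb{E}(\mathbf{1}_A\mid\mathcal{I}(T_i))\;d\mu.
\]

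To finish, I would invoke the standard integral inequality
\[
\int\mathbf{1}_A\prod_{i=1}^\ell\mathbb{E}(\mathbf{1}_A\mid\mathcal{I}(T_i))\;d\mu\geq(\mu(A))^{\ell+1}.
\]
The case $\ell=1$ is just Jensen, as $\int\mathbf{1}_A\,\mathbb{E}(\mathbf{1}_A\mid\mathcal{I}(T_1))\,d\mu=\int\mathbb{E}(\mathbf{1}_A\mid\mathcal{I}(T_1))^2\,d\mu\geq(\mu(A))^2$. I expect this inequality in its full generality to be the only genuinely non-trivial point of the plan: the distinct $\sigma$-algebras $\mathcal{I}(T_i)$ are not automatically positively correlated under the conditional measure on $A$, so a naive iteration of Cauchy--Schwarz or Jensen is insufficient, and one must appeal to the standard Khintchine-type lower bound (used in the same way in the analogous Hardy field result \cite[Corollary~2.5]{F3}). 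Once this bound is granted, the conclusion of Corollary~\ref{C:gen1} is immediate from the two displays above.
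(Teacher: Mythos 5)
Your proposal is correct and is exactly the argument the paper has in mind: the paper itself offers no proof of Corollary~\ref{C:gen1}, deferring to \cite[Corollary~2.5]{F3}, whose proof is precisely your reduction (specialize Theorem~\ref{T:main_general} to $f_1=\cdots=f_\ell=\mathbf{1}_A$, pair with $\mathbf{1}_A$, and invoke the lower bound for $\int \mathbf{1}_A\prod_{i=1}^\ell\mathbb{E}(\mathbf{1}_A\mid\mathcal{I}(T_i))\,d\mu$). The inequality you rightly single out as the only non-trivial step is Lemma~1.6 of \cite{CFH} (already in the bibliography); for the record, it is proved by setting $f_i=\mathbb{E}(\mathbf{1}_A\mid\mathcal{I}(T_i))$, noting $f_i>0$ a.e.\ on $A$, writing $\mu(A)=\int \bigl(\mathbf{1}_A\prod_{i=1}^{\ell}f_i^{1/(\ell+1)}\bigr)\cdot\prod_{i=1}^{\ell}\bigl(\mathbf{1}_A f_i^{-1/(\ell+1)}\bigr)\,d\mu$ and applying H\"older with all exponents equal to $\ell+1$, since $\int \mathbf{1}_A f_i^{-1}\,d\mu=\mu(\{f_i>0\})\le 1$ --- confirming your instinct that a naive iteration of Jensen or Cauchy--Schwarz does not suffice.
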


One can get, by using a variant of Furstenberg correspondence principle, an analogous to Corollary~\ref{C:gen1} combinatorial consequence of Theorem~\ref{T:main_general} (see \cite[Corollary~2.9]{F3}). We chose instead to present a result in topological dynamics (see \cite[Theorem~2.6]{F3}).

\begin{corollary}\label{C:gen2}
Let $g_1,\ldots,g_\ell$ be as in Theorem~\ref{T:main_general}, $(X,d)$ a compact metric space and $T_1,\ldots,T_\ell$ invertible, commuting, minimal transformations from $X$ to itself. Then, for a residual and $T_i$-invariant set of $x\in X$ we have
\[\overline{\{(T_1^{[g_1(n)]}x,\ldots,T_\ell^{[g_\ell(n)]}x):\;n\in \mathbb{N}\}}=X^\ell.\]
\end{corollary}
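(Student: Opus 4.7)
The plan is to reduce the topological statement to Theorem~\ref{T:main_general} by a standard Baire-category argument, whose crux is a positivity assertion for certain multiple averages obtained from minimality alone.

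Fix a countable basis $\{V_k\}_{k\in\N}$ of $X^\ell$ consisting of boxes $V_k=U_1^{(k)}\times\cdots\times U_\ell^{(k)}$, and write $Y_x$ for the orbit closure appearing in the statement. Set
\[
R_k:=\bigcup_{n\in\N}\bigcap_{j=1}^{\ell}T_j^{-[g_j(n)]}U_j^{(k)}\qquad\text{and}\qquad R:=\bigcap_{k}R_k.
\]
Each $R_k$ is open, and $R=\{x\in X:\,Y_x=X^\ell\}$, so $R$ is a $G_\delta$. Commutativity of the $T_i$'s gives $Y_{T_ix}=\widetilde T_i(Y_x)$, where $\widetilde T_i(x_1,\ldots,x_\ell):=(T_ix_1,\ldots,T_ix_\ell)$ is a homeomorphism of $X^\ell$; since $\widetilde T_i(X^\ell)=X^\ell$, the set $R$ is $T_i$-invariant for each $i$.

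To show each $R_k$ is dense, fix a non-empty open $V\subseteq X$. By Krylov--Bogolyubov (applied via Markov--Kakutani to the commuting continuous affine actions of the $T_i$'s on the space of Borel probability measures on $X$), the maps $T_1,\ldots,T_\ell$ admit a common invariant Borel probability measure $\mu$; minimality of each $T_i$ then forces the support of $\mu$ to be all of $X$, whence $\mu(V)>0$ and $\mu(U_j^{(k)})>0$ for all $j$. Applying Theorem~\ref{T:main_general} in the system $(X,\mathcal{B}_X,\mu,T_1,\ldots,T_\ell)$ with $f_j=\mathbf{1}_{U_j^{(k)}}$ and pairing the resulting $L^2$-limit against $\mathbf{1}_V$ yields
\[
\frac{1}{N}\sum_{n=1}^{N}\mu\Bigl(V\cap\bigcap_{j=1}^{\ell}T_j^{-[g_j(n)]}U_j^{(k)}\Bigr)\;\longrightarrow\;\int \mathbf{1}_V\prod_{j=1}^{\ell}\mathbb{E}(\mathbf{1}_{U_j^{(k)}}\,|\,\mathcal{I}(T_j))\,d\mu.
\]
For each $j$, $T_j$-minimality together with compactness of $X$ produces integers $n_1^{(j)},\ldots,n_{K_j}^{(j)}$ with $X=\bigcup_{s}T_j^{-n_s^{(j)}}U_j^{(k)}$, i.e.\ $\sum_{s}\mathbf{1}_{U_j^{(k)}}\circ T_j^{n_s^{(j)}}\geq 1$ pointwise. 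Taking $\mathbb{E}(\cdot\,|\,\mathcal{I}(T_j))$ and using $\mathbb{E}(f\circ T_j^m\,|\,\mathcal{I}(T_j))=\mathbb{E}(f\,|\,\mathcal{I}(T_j))$ for $m\in\mathbb{Z}$ gives $\mathbb{E}(\mathbf{1}_{U_j^{(k)}}\,|\,\mathcal{I}(T_j))\geq 1/K_j$ almost everywhere. Consequently the right-hand integral is at least $\mu(V)\prod_{j}1/K_j>0$, so some $n$ satisfies $\mu(V\cap\bigcap_{j}T_j^{-[g_j(n)]}U_j^{(k)})>0$; in particular that intersection is non-empty, furnishing a point of $V\cap R_k$. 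Thus $R$ is a dense $G_\delta$ which is $T_i$-invariant, i.e., residual and $T_i$-invariant, as required.

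The main obstacle is the positivity of the integral of $\prod_{j}\mathbb{E}(\mathbf{1}_{U_j^{(k)}}\,|\,\mathcal{I}(T_j))$: because the chosen $\mu$ need not be $T_j$-ergodic for any single $j$, each factor is a genuine non-constant $T_j$-invariant function, and without further input the product could vanish on a set meeting $V$ in large measure. Converting ``positive somewhere'' into a uniform strictly positive lower bound is exactly the role of the finite-cover argument driven by minimality, and is what makes Theorem~\ref{T:main_general} powerful enough to close the Baire-category scheme.
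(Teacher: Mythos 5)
Your proof is correct and follows essentially the standard route that the paper delegates to \cite[Theorem~2.6]{F3}: a Baire-category argument on the open sets $R_k$, with density obtained by applying Theorem~\ref{T:main_general} to a common invariant measure of full support and lower-bounding each $\mathbb{E}(\mathbf{1}_{U_j^{(k)}}\,|\,\mathcal{I}(T_j))$ via the finite subcover supplied by minimality. All the individual steps (Markov--Kakutani, full support from minimality, invariance of $R$ under commutativity, and the pointwise bound $\mathbb{E}(\mathbf{1}_{U}\,|\,\mathcal{I}(T_j))\geq 1/K_j$) check out.
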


\begin{remark*}
Letting $\lceil x\rceil$ and $[[x]]$ denote the smallest integer which is $\geq x$ and the closest integer to $x$ respectively, using the relations $\lceil x\rceil=-[-x]$ and $[[x]]=[x+1/2]$,
we see that all the previous results, together with their implications, remain true if, in the expressions of interest, the $[\cdot]$'s are individually and independently replaced by any of $[\cdot],$ $\lceil \cdot \rceil,$ or $[[\cdot]].$ 
\end{remark*}

\section{Facts about tempered functions, \\ van der Corput lemma and background material}

In this section, we state some general facts about tempered functions, describe the sets that were defined in Section~\ref{results} and discuss the crucial tool of van der Corput. We also provide some information on the background material that we use throughout the paper.

\subsection*{I. General facts about tempered functions.} (i) The conditions (1) and (2) of the definition of a tempered function imply that every such function is eventually monotone (fact that is true for Hardy field functions as well).

\medskip

(ii) Any tempered function $g$ of degree $i$ satisfies the growth conditions: $x^i\log x\prec g(x)\prec x^{i+1}$ (see \cite{BK}).

\medskip

For $g_1,$ $g_2$ defined on $\mathbb{R}^+,$ we write $g_2\ll g_1$ if there exists a constant $C$ such that $|g_2(x)|\leq C|g_1(x)|$ for $x$ sufficiently large.

\medskip

(iii) One can be more specific on the growth rates of a function from $\mathcal{T}.$ We will show that every function $g\in\mathcal{T}$ with $\lim_{x\to \infty}(xg'(x)/g(x))=\alpha,$ behaves ``almost'' as $x^\alpha.$

\medskip

If $g\in \CT_{i_0},$ then for $0<\varepsilon<\alpha-i_0$ we eventually have that $x^{\alpha-\varepsilon}\ll g(x)$ (in case $\alpha<i_0+1,$ we also eventually have that $g(x)\ll x^{\alpha+\varepsilon}$ for every $0<\varepsilon<i_0+1-\alpha$).\footnote{ So, the set $\mathcal{T}$ doesn't contain functions which are slower than any power of $x$.}

Indeed, there exists $\alpha\in (i_0,i_0+1]$ with $\lim_{x\to\infty}\frac{xg'(x)}{g(x)}=\alpha,$ so for any $\varepsilon>0$ there exists $M>0$ such that $\alpha-\varepsilon<\frac{xg'(x)}{g(x)}<\alpha+\varepsilon$ for all $x>M,$ hence $$\log\left(\frac{x}{M}\right)^{\alpha-\varepsilon}=\int_M^x \frac{\alpha-\varepsilon}{t}\;dt\leq \log\frac{|g(x)|}{|g(M)|}=\int_{M}^x \frac{g'(t)}{g(t)}\;dt\leq \int_M^x \frac{\alpha+\varepsilon}{t}\;dt=\log\left(\frac{x}{M}\right)^{\alpha+\varepsilon},$$ from which the claim follows.

\medskip

(iv) By a classical result of Fej\'{e}r, we have that if $g$ is a Fej\'{e}r function, then the sequence $(g(n))_n$ is equidistributed in $\T$ (see Subsection~\ref{Ss:ud} below for the definition). Using van der Corput's difference theorem (i.e., $(x_n)_n$ is equidistributed in $\T$ if $(x_{n+h}-x_n)_n$ is equidistributed in $\T$ for any $h\in\mathbb{N}$) and the aforementioned result, we get (see \cite{BK}) that if $g$ is a tempered function, then $(g(n))_n$ is equidistributed in $\T$.\footnote{ Notice here that a Hardy field function doesn't necessarily have this property (but it does when it is ``away'' from polynomials). Also, note that the cutoff on the growth rate is crucial as $(\log n)_n$ is not equidistributed in $\T$ whereas for every $\varepsilon>0,$ $(\log^{1+\varepsilon} n)_n$ is. }

\subsection*{II. Differences between the classes of sublinear functions}

Here we list some facts about functions from the sets $\mathcal{F},$ $\mathcal{T}_0$ and $\mathcal{LE}_0.$\footnote{ We denote with $\mathcal{LE}_0$ the set of logarithmico-exponential functions, $g,$ of degree $0,$ i.e., $\log x\prec g(x)\prec x.$}

\medskip

(i) $\mathcal{F}$ is a proper subset of Fej\'{e}r functions since for example $\log^\beta x,$ $\beta>1$ is a Fej\'{e}r function that doesn't belong to $\mathcal{F}.$

\medskip

(ii) $\CT_0$ is a proper subset of $\CF,$ since $g(x)=x^{\alpha}(1+(\cos x/x^{\beta}))\in\CF\setminus \CT_0$ for $0<\alpha<1,$ $2<\beta<3,$  as the expression $xg'''(x)/g''(x)$ does not have a limit as $x\to\infty$.

\medskip

(iii)  $\mathcal{LE}_0\nsubseteq \CT_0$ since $g(x)=e^{(\log x)^\alpha}\in \mathcal{LE}_0\setminus (\CT_0\cup \CF),$ for $0<\alpha<1,$ and $\CT_0\nsubseteq \mathcal{LE}_0,$ since, from \cite{BK}, we have that $g(x)=x^{1/2} (2+\cos\sqrt{\log x})\in \CT_0\setminus \mathcal{LE}_0$ (the derivative of $g^2$ is not eventually monotone).\footnote{  Hence, our study over the family $\CT$ that includes functions with ``oscillation'', for multiple transformations, will provide us with various new results that add to those of \cite{BK}, \cite{F2}, \cite{F3} and \cite{F4}.}

\subsection*{III. Relations between the growth rates}

(i) Let $g_1, g_2\in \mathcal{T}.$ By \cite[Lemma~2.6]{BK}, assuming that $\lim_{x\to\infty}\frac{xg_i'(x)}{g_i(x)}=\alpha_i,$ $i=1,2,$ we have that if $g_2\prec g_1,$ then $\alpha_2\leq \alpha_1$. Conversely, if $\alpha_2<\alpha_1,$ then $g_2\prec g_1$ and  $g_2'/g_1'$ is eventually monotone. 

Note that it can happen $\alpha_1=\alpha_2$ while $g_2\prec g_1.$ (Indeed, one can take for example $g_1(x)=x^\alpha \log^{\beta_1} x$ and $g_2(x)=x^\alpha \log^{\beta_2} x,$ with $\alpha>0,$ $\alpha\notin \N$ and $\beta_1>\beta_2.$ Then $g_1, g_2\in\CT_{[\alpha]}$ with $g_2\prec g_1,$ and $\lim_{x\to \infty}(xg'_i(x)/g_i(x))=\alpha,$ $i=1,2.$)  

\medskip

(ii) If $g_2\prec g_1\in \mathcal{T},$ with $g_2\in \mathcal{T}_{i_0},$ then $g_2^{(k)}\prec g_1^{(k)}$ for all $k=0,\ldots,i_0+1.$ Indeed, assuming that $\lim_{x\to\infty}\frac{xg_i'(x)}{g_i(x)}=\alpha_i,$ $i=1,2,$ (i) implies $\alpha_2\leq \alpha_1$. The claim follows by
\[\frac{g_1^{(k)}(x)}{g_2^{(k)}(x)}= \left(\prod_{i=1}^k\frac{xg_1^{(i)}(x)}{g_1^{(i-1)}(x)}\cdot  \frac{g_2^{(i-1)}(x)}{xg_2^{(i)}(x)}\right)\cdot \frac{g_1(x)}{g_2(x)}\]  (note here the crucial fact that $k\leq i_0+1$ so $\lim_{x\to\infty}xg_i^{(k)}(x)/g_i^{(k-1)}(x)$ is never $0,$ hence we can freely take limits of ratios of such expressions).\footnote{ In order to show that $g_2^{(k)}\prec g_1^{(k)}$ for all $k,$ we can restrict to the subfamily of $\mathcal{T}$ of functions $g$ where $\lim_{x\to\infty}xg'(x)/g(x)$ is strictly between $i$ and $i+1$ for some $i\geq 0$ (see below, IV (ii), for some weaker assumption). When one deals with, say logarithmico-exponential, $\mathcal{LE}$, Hardy field functions though there are no such issues, as this class is closed under derivatives and analogous expressions always have limits, hence the use of L'Hospital's rule is not restricted. On the other hand, dealing with the class of tempered functions, even the one of Fej{\'e}r ones, one can potentially have various issues. For example, take the sublinear function $g_1(x)=x/\log x\in \mathcal{T}_0.$ Not only $g_1'$ is not a tempered function but also $xg_1''(x)/g_1'(x)$ converges to $0.$ There are even more exotic cases. Namely, take $g_2(x)=x^\alpha(4/\alpha+\sin\log x)^3,$ where $\alpha$ is a sufficiently small positive real number (this special function was firstly introduced and studied in \cite{DKS}). While $g_2$ is Fej{\'e}r, the quantity $xg_2'(x)/g_2(x)$ doesn't even have a limit. Cases like this last one need to (and will) be avoided in our framework.}

\medskip

(iii) If $g\in \CT$ then $g^{(j)}(x)/g^{(i)}(x)\to 0$ for all $j>i\in \omega$ (so, $g^{(j)}(x)\to 0$ for all $j>i_0,$ if $g\in \CT_{i_0}$). Indeed, this follows from the relation:
\[\frac{g^{(j)}(x)}{g^{(i)}(x)}=\left(\prod_{k=i}^{j-1}\frac{xg^{(k+1)}(x)}{g^{(k)}(x)}\right)\cdot\frac{1}{x^{j-i}}.\]

\medskip

For $g_1,$ $g_2$ defined on $\mathbb{R}^+,$ we write $g_1\sim g_2$ if $g_1(x)/g_2(x)$ converges to a non-zero constant as $x\to\infty.$

\medskip

(iv) If $g_1\in \mathcal{T}_{i_0}$ and $g_2\in \mathcal{R}$ with $g_2\sim g_1,$ then $g_2\in \mathcal{T}_{i_0}$ and $\lim_{x\to\infty}(xg'_2(x)/g_2(x))=\lim_{x\to\infty}(xg'_1(x)/g_1(x)).$ 

Indeed, since 
\[\frac{g'_2(x)}{g'_1(x)}=\frac{xg'_2(x)}{g_2(x)}\cdot \frac{g_1(x)}{xg'_1(x)}\cdot \frac{g_2(x)}{g_1(x)},\] we have that the limit $\lim_{x\to\infty}(g'_2(x)/g'_1(x))$ exists  (and so it has the same value as the one of $g_2(x)/g_1(x)$). The second claim now follows by the relation
\[ \frac{x g_2'(x)}{g_2(x)}=\frac{xg_1'(x)}{g_1(x)}\cdot \frac{g_2'(x)}{g_1'(x)} \cdot \frac{g_1(x)}{g_2(x)},\] so $g_2\in\mathcal{T}_{i_0}$ since 
\[g_2^{(i_0+1)}(x)=\frac{g_2^{(i_0+1)}(x)}{g_1^{(i_0+1)}(x)}\cdot g_1^{(i_0+1)}(x)=\left(\prod_{i=1}^{i_0}\frac{xg_2^{(i+1)}(x)}{g_2^{(i)}(x)}\cdot\frac{g_1^{(i)}(x)}{xg_1^{(i+1)}(x)}\right)\cdot\frac{g_2'(x)}{g_1'(x)}\cdot g_1^{(i_0+1)}(x).\]

\medskip

(v) It is easy for one to check that $g\in \CT_{i+1}\;\Leftrightarrow\;g'\in \CT_i,$ $i\geq 0.$\footnote{ This simple observation is crucial for the sequel, as it shows that the differences (i.e., derivatives) reduce the complexity of a function in $\CT.$ Note also that analogously to the previous property we have that $g\in \CT_{i}\;\Leftrightarrow\;xg\in \CT_{i+1},$ $i\in \omega$ (resp. $g\in \mathcal{T}_{i+1} \; \Leftrightarrow\; g/x\in \mathcal{T}_i,$ $i\in \omega$), using the fact that $\frac{d^k}{dx^k}(xg(x))=kg^{(k-1)}(x)+xg^{(k)}(x)$ for all $k\in \mathbb{N}.$}

\subsection*{IV. Linear combinations of functions of $\CT$}

(i) If $g_1\in \CT_{i_0},$ for some $i_0\in \omega,$ then any non-trivial linear combination (i.e., not all coefficients are equal to $0$) of the form 
\[g=\lambda_0 g_1+\ldots+\lambda_{i_0+1}g_1^{(i_0+1)},\] belongs to $\mathcal{T}_{i_0-k^\ast}$ for $k^\ast=\min\{i: \;\lambda_i\neq 0\}$ (where we set $\CT_{-1}$ to be the set of functions that converge to $0$ to which we artificially assign the degree of $-1$).

Using III (iii), we have that \[\frac{g(x)}{g_1^{(k^\ast)}(x)}=\sum_{j=k^\ast}^{i_0+1}\lambda_j\cdot\frac{g_1^{(j)}(x)}{g_1^{(k^\ast)}(x)}\to \lambda_{k^\ast}\in \mathbb{R},\] so $g\sim g_1^{(k^\ast)},$ hence the claim will follow from III (iv) and (v) if we show that $g\in \mathcal{R}.$ This follows by the relation 
\[\frac{xg^{(j+1)}(x)}{g^{(j)}(x)}=\frac{xg_1^{(k^\ast+j+1)}(x)}{g_1^{(k^\ast+j)}(x)}\cdot\frac{1+\sum_{i=k^\ast+1}^{i_0+1}\frac{\lambda_i}{\lambda_{k^\ast}}\cdot\frac{g_1^{(i+j+1)}(x)}{g_1^{(k^\ast+j+1)}(x)}}{1+\sum_{i=k^\ast+1}^{i_0+1}\frac{\lambda_i}{\lambda_{k^\ast}}\cdot\frac{g_1^{(i+j)}(x)}{g_1^{(k^\ast+j)}(x)}},\] and III (iii).

\medskip

(ii) If $g_j\in \CT_{i_j},$ $j=1,\ldots,\ell$ with $g_\ell\prec\ldots\prec g_1$ and $g_j^{(i_j+2)}(x)/g_1^{(i_j+2)}(x)\to 0,$ for all $1<j\leq \ell,$ we have that any linear combination of the form
\[g=\lambda_1 g_1+\ldots +\lambda_\ell g_\ell\] with $\lambda_1\neq 0,$ is a function in $\CT_{i_1}.$\footnote{ Hence provides a sufficient condition for Theorems~\ref{T:vNt} and ~\ref{T:main_general}.}

Indeed, since $g\sim g_1,$ by III (iv), it suffices to show that $g\in \mathcal{R}.$ This follows by the relation 
\[\frac{xg^{(j+1)}(x)}{g^{(j)}(x)}=\frac{xg_1^{(j+1)}(x)}{g_1^{(j)}(x)}\cdot\frac{1+\sum_{i=2}^{\ell}\frac{\lambda_i}{\lambda_{1}}\cdot\frac{g_i^{(j+1)}(x)}{g_1^{(j+1)}(x)}}{1+\sum_{i=2}^{\ell}\frac{\lambda_i}{\lambda_{1}}\cdot\frac{g_i^{(j)}(x)}{g_1^{(j)}(x)}},\] noting that $g_j^{(k)}\prec g_1^{(k)}$ for all $k\in \omega,$ $1<j\leq \ell,$
which holds from III (ii) for $k\leq i_j+1,$ the assumption for $k=i_j+2,$ and the relation 
\[\frac{g_{1}^{(k)}(x)}{g_j^{(k)}(x)}= \left(\prod_{i=i_j+3}^k\frac{xg_1^{(i)}(x)}{g_1^{(i-1)}(x)}\cdot  \frac{g_j^{(i-1)}(x)}{xg_j^{(i)}(x)}\right)\cdot \frac{g^{(i_j+2)}_1(x)}{g_j^{(i_j+2)}(x)}\] otherwise.

\medskip

(iii) Let $g_j\in \CT_{i_0},$ $1\leq j\leq \ell,$ with $g_\ell\prec\ldots\prec g_1.$ If every non-trivial linear combination of $g_i$'s is in $R,$ then any linear combination of the form
\[\lambda_1 g^{(i_0)}_1+\ldots +\lambda_\ell g^{(i_0)}_\ell\] with $\lambda_1\neq 0,$ is a function in $\CT_{0}.$

Indeed, letting $g=\lambda_1 g_1+\ldots +\lambda_\ell g_\ell,$ since $g_j^{(i_0)}\prec g_i^{(i_0)}$ for all $j>i$ by III (ii), we have that $g^{(i_0)}\sim g_1^{(i_0)}\in \CT_0.$ Using the assumption, we have that $g\in \mathcal{R},$ hence $g^{(i_0)}$ is also in $\mathcal{R},$ so the result now follows from III (iv). 

\subsection{The van der Corput lemma}\label{vdC,FCP}\footnote{ Also known as \emph{van der Corput trick}.} In this subsection, we present a crucial tool for our study, namely, the van der Corput lemma. More specifically, we will use the following version of it (for a proof, one can imitate \cite[Lemma~3.1]{Nid}):

\begin{lemma}\label{vdClemma}
Let $(u_n)_n$ be a bounded sequence in a Hilbert space $(\mathcal{H},\norm{\cdot}).$ We have that
\[\limsup_{N\to\infty}\norm{\frac{1}{N}\sum_{n=1}^N u_n}^2\leq 4\limsup_{H\to\infty}\frac{1}{H}\sum_{h=1}^H \limsup_{N\to\infty}\left|\frac{1}{N}\sum_{n=1}^N \langle u_n,u_{n+h}\rangle \right|.\]
\end{lemma}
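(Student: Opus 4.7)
The plan is to execute the classical telescoping-plus-Cauchy--Schwarz argument that underlies every proof of the van der Corput inequality. Let $C := \sup_n \|u_n\| < \infty$, and for each $H \in \mathbb{N}$ introduce the auxiliary ``double average''
\[
V_{N,H} := \frac{1}{N}\sum_{n=1}^N w_n, \qquad w_n := \frac{1}{H}\sum_{h=0}^{H-1} u_{n+h}.
\]
The first step is to compare $V_{N,H}$ with $\frac{1}{N}\sum_{n=1}^N u_n$: swapping the order of the two sums and re-indexing, the two averages coincide except in boundary strips indexed over sets of size $\leq H$, so
\[
\Bigl\| V_{N,H} - \frac{1}{N}\sum_{n=1}^N u_n \Bigr\| = O\!\left(\frac{CH}{N}\right),
\]
and consequently, for each fixed $H$, $\limsup_{N\to\infty}\|\tfrac{1}{N}\sum_n u_n\|^2 = \limsup_{N\to\infty}\|V_{N,H}\|^2$.

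Next I would apply the Cauchy--Schwarz inequality (equivalently, convexity of $\|\cdot\|^2$) to the outer average and expand the inner norm squared:
\[
\|V_{N,H}\|^2 \leq \frac{1}{N}\sum_{n=1}^N \|w_n\|^2 = \frac{1}{H^2}\sum_{h,h'=0}^{H-1} \frac{1}{N}\sum_{n=1}^N \langle u_{n+h}, u_{n+h'}\rangle.
\]
Split this into the diagonal $(h=h')$ and off-diagonal pieces. The diagonal contributes at most $C^2/H$, which vanishes as $H \to \infty$. For the off-diagonal part, a further shift of index (at the cost of another $O(H/N)$ error) turns $\tfrac{1}{N}\sum_n \langle u_{n+h}, u_{n+h'}\rangle$ into $\tfrac{1}{N}\sum_n \langle u_n, u_{n+|h-h'|}\rangle$; the number of pairs with $|h-h'| = k$ is $2(H-k) \leq 2H$, and passing to $\limsup_N$ inside the finite sum over $(h,h')$ (which is legitimate because for fixed $H$ the sum is finite) yields a Ces{\`a}ro-type bound of the form $\frac{C'}{H}\sum_{h=1}^{H-1}\limsup_N |\tfrac{1}{N}\sum_n \langle u_n, u_{n+h}\rangle|$. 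Finally, sending $H \to \infty$ kills the residual diagonal term and produces the claimed inequality.

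Every step is elementary; there is no real obstacle beyond careful index bookkeeping. The one point that must be handled with discipline is the order of limits: one must send $N \to \infty$ \emph{before} $H \to \infty$ so that the two boundary corrections of size $O(H/N)$ vanish at each stage while the diagonal correction of size $O(1/H)$ survives until the outer $\limsup_H$ removes it. The precise constant on the right-hand side depends on how loosely one estimates $2(H-k)$ against $H$ and on whether one absorbs the diagonal into a uniform bound; the value $4$ claimed in the statement arises naturally from the loosest such bookkeeping and matches the proof strategy of \cite[Lemma~3.1]{Nid}.
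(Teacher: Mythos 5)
Your argument is correct and is precisely the classical telescoping-plus-Cauchy--Schwarz proof that the paper itself defers to (it gives no proof, citing \cite[Lemma~3.1]{Nid} and saying one can imitate it). As a minor remark, careful bookkeeping of the $2(H-k)\leq 2H$ off-diagonal count actually yields the inequality with constant $2$, so the stated constant $4$ follows a fortiori.
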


Its iterated use will allow us to eventually reduce the complexity of our system sufficiently enough.
What we do is that we shift our functions and calculate the inner product of the new shifted expression with the initial one. Letting, 
for $h\in\mathbb{R}$ and $g\in \mathcal{T},$  \[(S_h g)(x)=g(x+h),\footnote{ Even though some results are stated generally for $h\in \mathbb{R},$ we will actually use them only for $h\in \mathbb{N}$.}\]
 we naturally define, following the definition from \cite{F3}, the van der Corput operation:

\medskip

Let $(\CA_1,\ldots,\CA_\ell):=((g_{1,1},\ldots,g_{\ell,1}),\ldots,$ $(g_{1,m},\ldots,g_{\ell,m}))$ be an ordered family of $\ell$-tuples of functions,\footnote{ Note that each $\mathcal{A}_j$ records the iterates of the function $f_j$ in \eqref{Prop: nice}, $1\leq j\leq \ell$.} $(\tilde{g}_1,\ldots,\tilde{g}_\ell)\in (\CA_1,\ldots,\CA_\ell),$ i.e., $(\tilde{g}_1,\ldots,\tilde{g}_\ell)=(g_{1,j},\ldots,g_{\ell,j})$ for some $1\leq j\leq m,$  and $h\in \mathbb{N}.$ The \emph{van der Corput operation} (\emph{vdC-operation}) acting on  $(\CA_1,\ldots,\CA_\ell),$ gives the family 
\[ (\tilde{g}_1,\ldots,\tilde{g}_\ell,h)\text{-}\text{vdC}(\CA_1,\ldots,\CA_\ell)
 \] of the ordered $\ell$-tuples:
 \[ \{(S_h g_{1,i}-\tilde{g}_1,\ldots,S_h g_{\ell,i}-\tilde{g}_\ell):\;i\in I\}\cup\{(g_{1,j}-\tilde{g}_1,\ldots,g_{\ell,j}-\tilde{g}_\ell):\; j\in J\}\] where $I, J\subseteq \{1,\ldots,m\}$ from which we have discarded the $\ell$-tuples of bounded functions.\footnote{ This removal will be justified later by the use of the Cauchy-Schwarz inequality.}

Later (see Section~\ref{slF}), after defining what a nice family of tempered functions is,\footnote{ Yet again following the corresponding definition for Hardy field functions from \cite{F3}.} we will show that this notion is preserved under a special, in the sense that it reduces the complexity, vdC-operation which will provide the required inductive scheme.

\medskip

One may wonder, since, after applying the vdC-operation, we are getting (up to) double the number of $\ell$-tuples of iterates (the initial ones together with their shifts) how one succeeds in reducing the ``complexity'' of the expressions of interest.\footnote{ This ``complexity'' is what we later define (in Section~\ref{S:nice}) as weight of the family.} This is achieved by the fact that our transformations are measure preserving, hence we can always subtract an $\ell$-tuple of iterates, discarding also the bounded ones.
 For differences of the same function one can easily show (see Lemma~\ref{L:1} below) that if $g\in \CT_i,$ then for any non-zero $h\in \mathbb{R}$ we have that $S_h g - g \sim g'\sim g/x\in \mathcal{T}_{i-1},$ hence the new iterate is of lower complexity. More generally, by carefully picking an $\ell$-tuple, we will show (imitating the proof of \cite[Lemma~5.5]{F3}) in Lemma~\ref{L:vdC red} that we get a new family with the required property. We also show that this can be done by simultaneously preserving the niceness property.

\subsection{Background material} In this subsection we list some background material that we use throughout the paper.

\subsubsection{The seminorms $\nnorm{\cdot}_k$}\label{Ss:semi}

We follow \cite{HK99} and \cite{CFH} for the inductive definition of the seminorms $\nnorm{\cdot}_k$ that we will use to control our averages. More specifically, the definition that we use here follows from \cite{HK99} (in the ergodic case), \cite{CFH} (in the general case) and the use of von Neumann's ergodic theorem.

\medskip

Let $(X,\mathcal{B},\mu,T)$ be a system and $f\in L^\infty(\mu).$  We define inductively the seminorms $\nnorm{f}_{k,\mu,T}$ (or just $\nnorm{f}_k$ if there is no confusion) as follows: 
$$ \nnorm{f}_{1,\mu,T}:= \norm{\E(f|\mathcal{I}(T))}_{2},$$
where $\mathcal{I}(T)$ is the $\sigma$-algebra of $T$-invariant sets and $\E(f|\mathcal{I}(T))$ the conditional expectation of $f$ with respect to $\mathcal{I}(T),$ satisfying $\int \E(f|\mathcal{I}(T))\;d\mu=\int f\;d\mu$ and $ T\E(f|\mathcal{I}(T))=\E(Tf|\mathcal{I}(T)).$

\medskip

For $k\geq 1,$ we let
\[\nnorm{f}^{2^{k+1}}_{k+1,\mu,T}:=\lim_{N-M\to\infty}\frac{1}{N-M}\sum_{n=M}^{N-1}\nnorm{\bar{f}\cdot T^n f}^{2^k}_{k,\mu,T}.\]
All the aforementioned limits exist and define seminorms (see \cite{HK99}). By using von Neumann's ergodic theorem, we get $\nnorm{f}^2_{1,\mu,T}=\lim_{N-M\to\infty}\frac{1}{N-M}\sum_{n=M}^{N-1}\int \bar{f}\cdot T^n f\;d\mu$ and, more generally, for every $k\geq 1$ we have that
\begin{equation}\label{E:norms}
\nnorm{f}^{2^k}_{k,\mu,T}=\lim_{N-M\to\infty}\frac{1}{N-M}\sum_{n_1=M}^{N-1} \cdots \lim_{N-M\to\infty}\frac{1}{N-M}\sum_{n_k=M}^{N-1}\int \prod_{\vec{\epsilon}\in \{0,1\}^k}\mathcal{C}^{|\vec{\epsilon}|}T^{\vec{\epsilon}\cdot\vec{n}}f\;d\mu,
\end{equation}
 where $\vec{\epsilon}=(\epsilon_1,\ldots,\epsilon_k),$ $\vec{n}=(n_1,\ldots,n_k),$ $|\vec{\epsilon}|=\epsilon_1+\ldots+\epsilon_k,$ $\vec{\epsilon}\cdot\vec{n}=\epsilon_1 n_1+\ldots+\epsilon_k n_k$ and for $z\in \C,$ $k\in\omega$ we let $ \mathcal{C}^k z=\left\{ \begin{array}{ll} z& \quad \;\text{if}\;k\;\text{is even} \\ \bar{z}&  \quad \;\text{if}\;k\;\text{is odd}\end{array} \right.$.

\medskip

Also, we remark that $\nnorm{f\otimes\bar{f}}_{k,\mu\times\mu,T\times T}\leq \nnorm{f}^2_{k+1,\mu,T}$ and $\nnorm{f}_{k,\mu,T}=\nnorm{f}_{k,\mu,T^{-1}}$ for all $k\in \N,$ which follow from \eqref{E:norms} and the ergodic theorem, 
 and, finally, $\nnorm{f}_{k,\mu,T}\leq \nnorm{f}_{k+1,\mu,T}$ for all $k\in\N$ (using \cite[Lemma~3.9]{HK99}).



\subsubsection{Nilmanifolds and nilsequences}

Let $ G $ be a $ k $-step nilpotent Lie group, meaning $ G_{k+1}=\{e\} $ for some $ k\in \N $, where $ G_k=[G,G_{k-1}] $ denotes the $ k $-th commutator subgroup, and $ \Gamma $ a discrete cocompact subgroup of $ G $. The  compact homogeneous space
$ X=G/\Gamma $ is called \emph{$k$-step nilmanifold} (or just \emph{nilmanifold}). A \emph{$k$-step nilsequence} is a sequence of the form  $(F(g^n x))_n,$ where $F$
is a continuous function on a $k$-step nilmanifold X. The group $ G $ acts on $ G/\Gamma $ by left translation where the translation  by an element $ b\in G $ is given by $ T_b(g\Gamma)=(bg)\Gamma $. We denote by $ m_X $ the normalized \emph{Haar measure} on $ X,$ i.e., the unique probability measure that is invariant under the action of $ G $ by left translations. 

\subsubsection{Equidistribution on nilmanifolds}\label{Ss:ud} For a connected and simply connected Lie group $G,$ let $ \exp :\mathfrak{g}\to G $ be the exponential map, where $ \mathfrak{g} $ is the Lie algebra of $ G $. For $ b \in G $ and $ s\in \R $ we define the element $ b^s $ of $ G $ as follows: If $ X\in \mathfrak{g} $ is such that $ \exp (X)=b $, then $ b^s=\exp(sX) $ (this is well defined since under the aforementioned assumptions $ \exp$ is a bijection). If $ (a(n))_{n} $ is a sequence of real numbers and $ X=G/\Gamma $ is a nilmanifold with $ G $ connected and simply connected, we say that the sequence
$ (b^{a(n)}x)_{n} $ is \emph{equidistributed} in $ X $, if for every $ F\in C(X) $ we have
\begin{equation}\label{E:equi}
    \lim_{N\to \infty} \frac{1}{N}\sum_{n=1}^N F(b^{a(n)}x)=\int F\; d m_X.\footnote{ If the sequence $ (a(n))_{n} $ takes only integer values, we are not obliged to assume that $ G $ is connected and simply connected.}
\end{equation}  


\noindent A nilrotation $ b\in G $ is \emph{ergodic}, or \emph{acts ergodically} on $ X $, if the sequence $ (b^n\Gamma)_{n} $ is dense in $X.$ If $ b\in G $ is ergodic, then for every
$ x\in X $ the sequence $ (b^nx)_{n} $ is equidistributed in $ X $.



The orbit closure
$\overline{(b^n\Gamma)}_{n} $ of $ b \in G$ has the structure of a nilmanifold; furthermore, the sequence $ (b^n\Gamma)_{n} $ is equidistributed in $ \overline{(b^n\Gamma)}_{n}$. If $ G $ is connected and simply connected and $ b\in G $, then
$ \overline{(b^s\Gamma)}_{s\in \R} $ is a nilmanifold with the nilflow $ (b^s\Gamma)_{s\in \R} $ being equidistributed in $ \overline{(b^s\Gamma)}_{s\in \R} $.
For the special case of $\T^\ell,$ by \cite{Weyl}, an equivalent to \eqref{E:equi} condition for $ (a(n))_{n}\subseteq \R^\ell $ to be equidistributed in $ \T^\ell $ (or equidistributed ($\text{mod}$ 1)) is to satisfy \emph{Weyl's criterion}, i.e., 
\[ \lim_{N\to \infty}\frac{1}{N}\sum_{n=1}^Ne^{2\pi i a(n)\cdot \vec{h}}=0, \]
for every non-zero $ \vec{h}\in \Z^\ell $, where $ a(n)\cdot \vec{h} $ denotes the inner product of
 $ a(n) $ with $\vec{h}$.






\section{A von Neumann-type result for tempered functions}\label{vN}

This short section is dedicated to the proof of Theorem~\ref{T:vNt}. We first recall a Hilbert space splitting theorem and a version of the classical Bochner-Herglotz theorem which we'll use for the space $L^2.$

\begin{theorem}[\cite{B1}]\label{T:split}
For $\ell\in \mathbb{N}$ let $U_1,\ldots,U_\ell$ be commuting unitary operators on a Hilbert space $(\mathcal{H},\norm{\cdot}).$ If 
\[\mathcal{H}_{\text{inv}}:=\{f\in \mathcal{H}:\;U_i f=f\;\;\text{for all}\;\;1\leq i\leq \ell\},\] and 
\[\mathcal{H}_{\text{erg}}:=\left\{f\in \mathcal{H}:\;\lim_{N_1,\ldots,N_\ell\to\infty}\norm{\frac{1}{N_1\cdots N_\ell}\sum_{n_1=1}^{N_1}\cdots\sum_{n_\ell=1}^{N_\ell}U_1^{n_1}\cdots U_\ell^{n_\ell}f}=0\right\},\] then
\[\mathcal{H}=\mathcal{H}_{\text{inv}}\oplus \mathcal{H}_{\text{erg}}.\]
\end{theorem}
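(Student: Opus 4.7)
The plan is to show that $\mathcal{H}_{\text{erg}}$ coincides with the orthogonal complement $\mathcal{H}_{\text{inv}}^\perp;$ this identification gives at once the orthogonal direct-sum decomposition $\mathcal{H}=\mathcal{H}_{\text{inv}}\oplus\mathcal{H}_{\text{erg}}.$ Both subspaces are routinely seen to be closed: $\mathcal{H}_{\text{inv}}$ is the intersection of kernels of the continuous operators $U_i-I,$ and $\mathcal{H}_{\text{erg}}$ is closed because the multi-parameter averaging operators
\[A_{\vec{N}}:=\frac{1}{N_1\cdots N_\ell}\sum_{n_1=1}^{N_1}\cdots\sum_{n_\ell=1}^{N_\ell}U_1^{n_1}\cdots U_\ell^{n_\ell}\]
are contractions, as convex combinations of unitaries.

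For the inclusion $\mathcal{H}_{\text{inv}}^\perp\subseteq \mathcal{H}_{\text{erg}},$ I would first identify $\mathcal{H}_{\text{inv}}^\perp$ with the closure $V$ of $\Span\{U_ih-h:\;h\in \mathcal{H},\;1\leq i\leq \ell\}.$ The standard \emph{coboundary} computation gives this: $f\perp V$ iff $\langle U_i^{\ast}f-f,h\rangle=0$ for every $h$ and $i,$ which via unitarity ($U_i^{\ast}=U_i^{-1}$) forces $U_if=f$ for every $i.$ For an elementary coboundary, the $i$-th sum telescopes to give
\[\norm{A_{\vec{N}}(U_ih-h)}\leq \frac{2\norm{h}}{N_i},\]
which tends to $0$ as $\vec{N}\to\infty;$ crucially, the estimate is uniform in the remaining indices $N_j,\;j\neq i.$ By linearity and an $\varepsilon/2$-approximation argument (using the contraction bound $\norm{A_{\vec{N}}}\leq 1$), the same conclusion extends to the entire closed subspace $V.$

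For the reverse inclusion $\mathcal{H}_{\text{erg}}\subseteq \mathcal{H}_{\text{inv}}^\perp,$ I would take $f\in\mathcal{H}_{\text{erg}}$ and $g\in\mathcal{H}_{\text{inv}}.$ Since $U_1^{n_1}\cdots U_\ell^{n_\ell}g=g$ for every $\vec{n},$ averaging the trivial identity $\langle f,g\rangle=\langle f,U_1^{n_1}\cdots U_\ell^{n_\ell}g\rangle$ over $\vec{n}$ gives $\langle f,g\rangle=\langle A_{\vec{N}}f,g\rangle,$ and the right-hand side tends to $0$ by Cauchy-Schwarz, so $f\perp g.$ The only delicate point, in both directions, is the passage from coordinate-wise bookkeeping (telescoping in a single index) to the genuine multi-parameter limit; the contraction property of $A_{\vec{N}}$ together with the uniformity of the telescoping estimate handles this cleanly, and I do not anticipate any deeper obstruction.
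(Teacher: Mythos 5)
Your proof is correct, and it is essentially the same argument as the one behind the cited result in \cite{B1}: the paper does not reprove this theorem but refers to the standard multi-parameter von Neumann splitting, namely identifying $\mathcal{H}_{\text{inv}}^{\perp}$ with the closed span of the coboundaries $U_ih-h$ and using the telescoping estimate together with the contraction property of the averaging operators. The only cosmetic point is in the reverse inclusion, where the identity you average is more cleanly written as $\langle f,g\rangle=\langle U_1^{n_1}\cdots U_\ell^{n_\ell}f,g\rangle$ (using unitarity and the invariance of $g$), which directly yields $\langle f,g\rangle=\langle A_{\vec{N}}f,g\rangle$.
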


\begin{theorem}\label{T:BH}
For $\ell\in \mathbb{N}$ let $U_1,\ldots,U_\ell$ be commuting unitary operators on a Hilbert space $\mathcal{H}$ and $f\in \mathcal{H}.$ There exists a measure $\nu_f$ on $\mathbb{T}^\ell$ such that
\[\langle U_1^{n_1}\cdots U_\ell^{n_\ell}f,f\rangle=\int_{\mathbb{T}^\ell}e^{2\pi i(n_1\gamma_1+\ldots+n_\ell \gamma_\ell)}\;d\nu_f(\gamma_1,\ldots,\gamma_\ell),\] 
for all $(n_1,\ldots,n_\ell)\in \mathbb{Z}^\ell.$
\end{theorem}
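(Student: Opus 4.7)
The plan is to reduce the statement to the classical Bochner theorem for the discrete locally compact abelian group $\Z^\ell$, whose Pontryagin dual is $\T^\ell$. First I would introduce the scalar function $\varphi\colon \Z^\ell\to\C$ defined by
\[\varphi(\vec n)=\langle U_1^{n_1}\cdots U_\ell^{n_\ell}f,f\rangle,\qquad \vec n=(n_1,\ldots,n_\ell)\in\Z^\ell.\]
Because the $U_i$ commute pairwise, the operator $U^{\vec n}:=U_1^{n_1}\cdots U_\ell^{n_\ell}$ is a unitary representation of $\Z^\ell$ on $\CH$, and in particular $U^{\vec n-\vec m}=U^{\vec n}(U^{\vec m})^{-1}=U^{\vec n}(U^{\vec m})^{*}$.

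The key step is to verify that $\varphi$ is positive definite on $\Z^\ell$: for any finite collection of vectors $\vec n_1,\ldots,\vec n_k\in\Z^\ell$ and scalars $c_1,\ldots,c_k\in\C$, the commutativity of the $U_i$ together with $U_i^{*}=U_i^{-1}$ yields
\[\sum_{i,j=1}^{k} c_i\bar c_j\,\varphi(\vec n_i-\vec n_j)
=\sum_{i,j=1}^{k}c_i\bar c_j\,\langle U^{\vec n_i}f,U^{\vec n_j}f\rangle
=\Bigl\Vert\sum_{i=1}^{k} c_i\,U^{\vec n_i}f\Bigr\Vert^2\ge 0.\]
Boundedness, $|\varphi(\vec n)|\le\|f\|^2$, is automatic since each $U^{\vec n}$ is an isometry. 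Thus $\varphi$ is a bounded positive definite function on $\Z^\ell$.

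I would then invoke Bochner's theorem in the form valid for every LCA group $G$: every continuous positive definite function on $G$ is the Fourier transform of a unique finite non-negative Borel measure on the dual group $\widehat G$. Since $\Z^\ell$ is discrete, continuity is vacuous, and $\widehat{\Z^\ell}=\T^\ell$ under the pairing $\langle\vec n,\vec\gamma\rangle=e^{2\pi i\vec n\cdot\vec\gamma}$. Applied to $\varphi$, this produces a unique finite non-negative Borel measure $\nu_f$ on $\T^\ell$ satisfying
\[\varphi(\vec n)=\int_{\T^\ell} e^{2\pi i(n_1\gamma_1+\cdots+n_\ell\gamma_\ell)}\,d\nu_f(\gamma_1,\ldots,\gamma_\ell)\]
for every $\vec n\in\Z^\ell$, which is precisely the desired representation. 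The only substantive point is the positive-definiteness verification above; after that, the conclusion is an immediate citation of Bochner's theorem. As an alternative route, one could instead invoke the joint spectral theorem for a commuting family of unitary operators to obtain a projection-valued measure $E$ on $\T^\ell$ with $U^{\vec n}=\int e^{2\pi i\vec n\cdot\vec\gamma}\,dE(\vec\gamma)$ and then set $\nu_f(A):=\langle E(A)f,f\rangle$; this yields the same measure but requires more machinery than the direct Bochner argument.
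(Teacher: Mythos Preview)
Your argument is correct: the function $\varphi(\vec n)=\langle U^{\vec n}f,f\rangle$ is positive definite on $\Z^\ell$ by the computation you gave, and Bochner's theorem on the LCA group $\Z^\ell$ (with dual $\T^\ell$) then yields the measure $\nu_f$. This is the standard proof of the Bochner--Herglotz theorem in this setting.

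Regarding comparison with the paper: the paper does not actually prove this statement. It is presented as a classical result (``a version of the classical Bochner-Herglotz theorem'') and simply invoked. So there is no paper proof to compare against; your write-up supplies exactly the justification the paper omits, and the alternative you mention via the joint spectral theorem for commuting unitaries is indeed the other standard route to the same conclusion.
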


We also prove the following Weyl-type result, which reveals the equidistribution properties of our functions:

\begin{proposition}\label{P:vn}
Under the assumptions of Theorem~\ref{T:vNt}, we have that
\begin{equation}\label{udg}
    \lim_{N\to\infty}\frac{1}{N}\sum_{n=1}^N e^{2\pi i ([g_1(n)]\gamma_1+\ldots+[g_\ell(n)]\gamma_\ell)}=0,
\end{equation}
for all $\gamma_1,\ldots,\gamma_\ell \in \mathbb{R}\setminus\mathbb{Z}.$
\end{proposition}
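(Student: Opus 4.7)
The plan is to reduce the statement to an equidistribution property for nontrivial linear combinations of the $g_i$'s. Writing $[x]=x-\{x\}$, I would first split
\[
\prod_{i=1}^\ell e^{2\pi i [g_i(n)]\gamma_i}\;=\;e^{2\pi i \sum_{i=1}^\ell g_i(n)\gamma_i}\cdot \prod_{i=1}^\ell \phi_i(g_i(n)),
\]
where $\phi_i(x):=e^{-2\pi i\{x\}\gamma_i}$ is a bounded, $1$-periodic, Riemann-integrable function of $x$ whose Fourier coefficients $c_{i,k}=(1-e^{-2\pi i\gamma_i})/(2\pi i(\gamma_i+k))$ are well defined and of order $O(1/|k|)$. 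This is precisely where the hypothesis $\gamma_i\notin\Z$ enters, since it guarantees $\gamma_i+k\ne 0$ for every $k\in\Z$.

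Next I would approximate each $\phi_i$ by its Fej\'er means $\phi_{i,K}(x):=\sum_{|k|\le K}(1-|k|/K)c_{i,k}e^{2\pi ikx}$, which are trigonometric polynomials satisfying $\|\phi_{i,K}\|_\infty\le 1$ and $\|\phi_i-\phi_{i,K}\|_{L^1(\T)}\to 0$ as $K\to\infty$. For each fixed $K$, substituting and expanding yields
\[
\frac{1}{N}\sum_{n=1}^N e^{2\pi i\sum_i \gamma_i g_i(n)}\prod_i\phi_{i,K}(g_i(n))=\sum_{|k_1|,\ldots,|k_\ell|\le K}\Big(\prod_i (1-|k_i|/K)c_{i,k_i}\Big)\cdot\frac{1}{N}\sum_{n=1}^N e^{2\pi i\sum_i(\gamma_i+k_i)g_i(n)},
\]
a finite linear combination of exponential averages whose phases are $\sum_i(\gamma_i+k_i)g_i$. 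Since every $\gamma_i+k_i\ne 0$, the coefficient vector is nonzero, so by hypothesis this phase lies in $\mathcal{T}$; by item I(iv) above, its integer values are equidistributed mod~$1$, and hence each inner average tends to $0$ as $N\to\infty$. Thus for every fixed $K$ the full approximated term vanishes in the limit.

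For the error introduced by the Fej\'er approximation, I would use the telescoping estimate $|\prod_i a_i-\prod_i b_i|\le \sum_i |a_i-b_i|$ (valid when $|a_i|,|b_i|\le 1$) together with $|e^{2\pi i\sum_i \gamma_i g_i(n)}|=1$ to bound the difference between the original average and the Fej\'er-approximated one by
\[
\sum_{i=1}^\ell \frac{1}{N}\sum_{n=1}^N |\phi_i(g_i(n))-\phi_{i,K}(g_i(n))|.
\]
Since $g_i\in\mathcal{T}$, the sequence $(g_i(n))_n$ is itself equidistributed mod~$1$, and $|\phi_i-\phi_{i,K}|$ is bounded and Riemann integrable on $\T$, so each $n$-average on the right converges to $\|\phi_i-\phi_{i,K}\|_{L^1(\T)}$ as $N\to\infty$, which in turn tends to $0$ as $K\to\infty$. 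Taking $N\to\infty$ first and then $K\to\infty$ delivers the conclusion.

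The only real subtlety is the usual two-parameter limit bookkeeping in such Fourier truncation arguments: the Fourier series of $\phi_i$ is not absolutely convergent (the $c_{i,k}$ are merely $O(1/|k|)$), which forces the use of Fej\'er means rather than sharp partial sums and requires the limit in $N$ to be taken for each fixed $K$ before letting $K\to\infty$.
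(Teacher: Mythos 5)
Your argument is correct, and it takes a genuinely different route from the paper. The paper proves the proposition (for $\ell=2$, which contains all the ideas) by a case analysis on whether each $\gamma_i$ is irrational or rational non-integer: in the irrational case it lifts to the equidistribution of $(\gamma_1 g_1(n), g_1(n),\gamma_2 g_2(n), g_2(n))_n$ in $\T^4$ and checks via Weyl's criterion that every nontrivial integer character produces a nontrivial real combination $(a\gamma_1+b)g_1+(c\gamma_2+d)g_2\in\CT$; in the rational case it computes directly by partitioning $\T^2$ into cells on which the fractional parts $\{g_i(n)/m\}$ determine $[g_i(n)]\bmod m$; the mixed case is handled by combining the two. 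You instead expand the sawtooth factor $e^{-2\pi i\gamma_i\{x\}}$ in a Fourier series and use Fej\'er summation, so that the hypothesis $\gamma_i\notin\Z$ enters only through the nonvanishing of $\gamma_i+k$, and all cases are treated uniformly; the double-limit bookkeeping ($N\to\infty$ first for fixed $K$, then $K\to\infty$) is handled correctly, and the error term is controlled by the equidistribution of each $(g_i(n))_n$ against the Riemann-integrable function $|\phi_i-\phi_{i,K}|$. Both proofs ultimately rest on exactly the same input, namely that every nontrivial real linear combination $\sum_i\lambda_i g_i$ lies in $\CT$ and is therefore equidistributed $\bmod\ 1$ by I~(iv). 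Your version is arguably more streamlined and generalizes more transparently to arbitrary $\ell$, at the cost of invoking slightly more Fourier-analytic machinery than the paper's elementary cell-counting in the rational case.
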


\begin{proof} We actually present here the $\ell=2$ case for convenience as it contains all the details for the general $\ell\in\mathbb{N}$ statement. To do so we split the proof into three cases.

\medskip

{\bf Case 1.} $\gamma_1,$ $\gamma_2\in \mathbb{R}\setminus\mathbb{Q}.$ 

\medskip

The proof of this step follows \cite[Lemma~5.12]{BK}. To show \eqref{udg} it suffices to show that $([g_1(n)]\gamma_1, [g_2(n)]\gamma_2)$ is equidistributed in $\mathbb{T}^2$ (which is actually a characterization that the numbers $\gamma_1,$ $\gamma_2\in \mathbb{R}\setminus\mathbb{Q}$). So, it suffices to show that the sequence \[\left(\gamma_1 g_1(n), g_1(n), \gamma_2 g_2(n), g_2(n)\right)_n\] is equidistributed in $\mathbb{T}^4,$ which is true if and only if \[\left((a\gamma_1+b) g_1(n)+(c\gamma_2+d) g_2(n)\right)_n\] is equidistributed in $\mathbb{T}$ for all $(a,b,c,d)\in \mathbb{Z}^4\setminus\{(0,0,0,0)\}.$ Since $\gamma_1,$ $\gamma_2\in \mathbb{R}\setminus\mathbb{Q},$ using the assumption on the $g_i$'s, we have that $(a\gamma_1+b) g_1+(c\gamma_2+d) g_2$ is a tempered function, so the result follows by I (iv).


\medskip

{\bf Case 2.} $\gamma_1,$ $\gamma_2\in \mathbb{Q}\setminus\mathbb{Z}.$

\medskip

If $\gamma_i=\frac{p_i}{q_i},$ we set $m=q_1q_2.$ Using some algebra, \eqref{udg} follows in this case if we show that  \[\lim_{N\to \infty} \frac{1}{N}\sum_{n=1}^N e^{2\pi i\frac{1}{m}(h_1[g_1(n)]+h_2[g_2(n)])}=0,\] for all $m\geq 2,$ $1\leq h_1, h_2\leq m-1.$

\medskip 


By the assumption on the $g_i$'s, we have that $(g_1(n)/m,g_2(n)/m)$ is equidistributed in $\mathbb{T}^2$ for all $m\geq 2.$ 
So, since $[x]=[\frac{x}{m}]m+j$ if $\frac{j}{m}\leq \left\{\frac{x}{m}\right\}\leq \frac{j+1}{m},$ $0\leq j\leq m-1,$ where $\{\cdot\}$ denotes the fractional part function, setting $E_{j_1,j_2}=[\frac{j+1}{m},\frac{j_1+1}{m})\times [\frac{j_2}{m},\frac{j_2+1}{m}),$ $0\leq j_1, j_2\leq m-1,$ we have:
	\begin{equation*}
	\begin{split}
	&\quad \lim_{N\to \infty} \frac{1}{N}\sum_{n=1}^N e^{2\pi i\frac{1}{m}(h_1[g_1(n)]+h_2[g_2(n)])}
	\\&= \lim_{N\to \infty} \frac{1}{N}\sum_{n=1}^N\sum_{j_1,j_2=0}^{m-1} e^{2\pi i\frac{1}{m}(h_1([\frac{g_1(n)}{m}]m+j_1)+h_2([\frac{g_1(n)}{m}]m+j_2))}{\bf{1}}_{E_{j_1,j_2}}(\{g_1(n)/m\},\{g_2(n)/m\})
		\\&= \lim_{N\to \infty}\frac{1}{N}\sum_{n=1}^N\sum_{j_1,j_2=0}^{m-1} e^{2\pi i\frac{1}{m}(h_1 j_1+h_2 j_2)}{\bf{1}}_{E_{j_1,j_2}}(\{g_1(n)/m\},\{g_2(n)/m\})
		\\&=\sum_{j_1,j_2=0}^{m-1} e^{2\pi i\frac{1}{m}(h_1 j_1+h_2 j_2)}\int_0^1\int_0^1 {\bf{1}}_{E_{j_1,j_2}}(x,y)\;dx dy=\frac{1}{m^2}\sum_{j_1,j_2=0}^{m-1} e^{2\pi i\frac{1}{m}(h_1 j_1+h_2 j_2)}=0.
	\end{split}
    \end{equation*}


{\bf Case 3.} $\gamma_1\in \mathbb{R}\setminus\mathbb{Q}$ and $\gamma_2\in \mathbb{Q}\setminus\mathbb{Z}.$\footnote{ The case where $\gamma_2\in \mathbb{R}\setminus\mathbb{Q}$ and $\gamma_1\in \mathbb{Q}\setminus\mathbb{Z}$ is analogous since we only care about the growth rates of the functions which are different.}

\medskip

For all $m\geq 2$ we have that $(g_1(n)\gamma_1,g_1(n),g_2(n)/m)_n$ is equidistributed in $\mathbb{T}^3$ since $(a\gamma_1+b)g_1+\frac{c}{m}g_2$ is a tempered function for all $(a,b,c)\in\mathbb{Z}^3\setminus \{(0,0,0)\}.$ It follows that $(g_1(n)\gamma_1,g_1(n),[g_2(n)])_n$ is equidistributed in $\mathbb{T}^2\times \mathbb{Z},$ therefore \[\lim_{N\to\infty}\frac{1}{N}\sum_{n=1}^N e^{2\pi i(\gamma_1[g_1(n)]+\frac{h}{m}[g_2(n)])}=0,\] for all $m\geq 2,$ $1\leq h\leq m-1,$ hence \eqref{udg} follows.
\end{proof}

We are now ready to prove Theorem~\ref{T:vNt}.

\begin{proof}[Proof of Theorem~\ref{T:vNt}]
Using Theorem~\ref{T:split}, we have that $L^2(\mu)=L^2(\mu)_{\text{inv}}\oplus L^2(\mu)_{\text{erg}}.$ For $f\in L^2(\mu)_{\text{inv}}$ we have that $T_1^{[g_1(n)]}\cdots T_\ell^{[g_\ell(n)]}f=f,$ so, it suffices to show that for $f\in L^2(\mu)_{\text{erg}}$ we have 
\[\lim_{N\to\infty}\norm{\frac{1}{N}\sum_{n=1}^N T_1^{[g_1(n)]}\cdots T_\ell^{[g_\ell(n)]}f}_2=0.\] This follows though from Theorem~\ref{T:BH} and Proposition~\ref{P:vn}. Indeed,
\begin{eqnarray*}
\norm{\frac{1}{N}\sum_{n=1}^N \left(\prod_{i=1}^\ell T_i^{[g_i(n)]}\right)f}^2_2 & = &  \frac{1}{N^2}\sum_{n,m=1}^N \langle \left(\prod_{i=1}^\ell T_i^{[g_i(n)]-[g_i(m)]}\right)f,f\rangle \\
& = & \frac{1}{N^2}\sum_{n,m=1}^N \int e^{2\pi i\left(\sum_{i=1}^\ell ([g_i(n)]-[g_i(m)])\cdot \gamma_i\right)}\;d\nu_f(\gamma_1,\ldots,\gamma_\ell) \\
& = & \int \left| \frac{1}{N}\sum_{n=1}^N e^{2\pi i\left(\sum_{i=1}^\ell [g_i(n)]\cdot \gamma_i\right)}\right|^2 \;d\nu_f(\gamma_1,\ldots,\gamma_\ell)
\end{eqnarray*}
which goes to $0$ as $N\to \infty$ since $f\in L^2(\mu)_{\text{erg}},$ hence $\nu_f(\{(0,\ldots,0)\})=0.$
\end{proof}

\section{The sub-linear case, Fej{\'e}r functions}\label{slF}

In this short section we treat the sub-linear case, i.e., when all the functions are Fej\'{e}r, separately. The main reason for doing this is that in this case, as in \cite{F2} for sub-linear Hardy field functions, we can show convergence to the expected limit, without using any commutativity assumptions on the $T_i$'s. We will actually prove this (Theorem~\ref{T:sub-linear}) for functions from $\mathcal{F},$ following arguments from \cite{F2}, \cite{F3} and \cite{BK}.\footnote{ In \cite{DKS} we dealt with the (a.e.) pointwise convergence of averages, addressing the sub-linear case for a large family of functions which implies the corresponding to Theorem~\ref{T:sub-linear} result for functions from $\mathcal{T}_0$.} This result will also be used in the proof of the base case of Proposition~\ref{P:1}.  

\begin{lemma}\label{L:sub-linear}
For $\ell\in \mathbb{N}$ let $g_\ell\prec \ldots\prec g_1$ functions from $\CF$ with $g'_i/g'_j$ eventually monotone for all $j\leq i,$ and $(A_i(n))_n$ sequences of functions in $L^\infty(\mu)$ with uniformly bounded norm such that the limits $\tilde{A}_i=\lim_{N-M\to\infty}\frac{1}{N-M}\sum_{n=M}^{N-1} A_i(n),$ $1\leq i\leq \ell,$ exist in $L^2(\mu).$ Then \[\lim_{N\to\infty}\norm{\frac{1}{N}\sum_{n=1}^N A_1([g_1(n)])\cdots A_\ell([g_\ell(n)])-\prod_{i=1}^\ell \tilde{A}_i}_2=0.\]
\end{lemma}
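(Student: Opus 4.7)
The plan is to induct on $\ell$, with the statement strengthened so that the convergence $\frac{1}{N-M}\sum_{n=M}^{N-1}\prod_{i=1}^\ell A_i([g_i(n)])\to \prod_{i=1}^\ell \tilde A_i$ in $L^2(\mu)$ holds along any windows $[M,N-1]$ with $M\to\infty$ and $N-M\to\infty$; this costs nothing, since the hypothesis on $(A_i(n))_n$ is already given in that windowed form. The philosophy follows \cite{F2}: one peels off the slowest function $g_\ell$ by partitioning the summation range into the long level sets on which $[g_\ell(n)]$ is constant, applies the inductive hypothesis on each such interval, and closes with a one-function argument for $g_\ell$.

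\emph{Base case $\ell=1$.} Since $g_1\in \CF$, the derivative $g_1'$ decreases monotonically to $0$ while $xg_1'(x)\to\infty$, so $g_1$ is eventually strictly monotone and the level sets $I_k=\{n:[g_1(n)]=k\}$ are intervals of length comparable to $1/g_1'(\min I_k)\to\infty$. The defining relation $xg_1''(x)/g_1'(x)\to\alpha-1$ with $\alpha\in(0,1]$ makes $g_1'$ regularly varying, so consecutive level-set lengths satisfy $|I_{k+1}|/|I_k|\to 1$. Writing $\tilde I_k:=I_k\cap[M,N-1]$, the identity $\sum_k|\tilde I_k|=N-M$ yields
\[\frac{1}{N-M}\sum_{n=M}^{N-1}A_1([g_1(n)])-\tilde A_1 = \frac{1}{N-M}\sum_k|\tilde I_k|\,(A_1(k)-\tilde A_1),\]
which, after summation by parts against the remainders $R_K:=\sum_{k\le K}(A_1(k)-\tilde A_1)$ (satisfying $\norm{R_K}_2=o(K)$ by the windowed Cesàro hypothesis), tends to $0$ in $L^2(\mu)$.

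\emph{Inductive step.} Assume the strengthened statement for $(\ell-1)$-tuples, and set $B_k:=\{n\in[1,N]:[g_\ell(n)]=k\}$. Each $B_k$ is an interval with $|B_k|\to\infty$, and on it $A_\ell([g_\ell(n)])=A_\ell(k)$ is constant, so
\[\frac{1}{N}\sum_{n=1}^N\prod_{i=1}^\ell A_i([g_i(n)])=\frac{1}{N}\sum_{k} |B_k|\,A_\ell(k)\cdot\left(\frac{1}{|B_k|}\sum_{n\in B_k}\prod_{i=1}^{\ell-1}A_i([g_i(n)])\right).\]
The monotonicity of the ratios $g_i'/g_j'$ and the Fej\'er property are inherited on sub-intervals, so the strengthened inductive hypothesis applies on each $B_k=[a_k,b_k]$ (with $a_k,b_k-a_k\to\infty$), and the bracketed term converges in $L^2(\mu)$ to $\prod_{i=1}^{\ell-1}\tilde A_i$ as $k\to\infty$. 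Substituting this limit reduces the expression to $\prod_{i=1}^{\ell-1}\tilde A_i\cdot\frac{1}{N}\sum_{n=1}^N A_\ell([g_\ell(n)])$, which by the base case applied to $g_\ell$ converges to $\prod_{i=1}^\ell\tilde A_i$.

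The main obstacle is turning the pointwise-in-$k$ convergence of the brackets into the norm convergence of the full weighted sum: writing the discrepancy as $\frac{1}{N}\sum_k |B_k|\,A_\ell(k)\,\varepsilon_k$ with $\varepsilon_k$ the $L^2(\mu)$-error of the $k$-th bracket, the uniform $L^\infty$-bound on $A_\ell$ and the triangle inequality reduce matters to $\frac{1}{N}\sum_k|B_k|\,\norm{\varepsilon_k}_2\to 0$, which follows by splitting the sum at a slowly growing threshold since $\norm{\varepsilon_k}_2\to 0$ eventually and $\sum_k|B_k|\le N$. An analogous uniform reduction legitimizes the Abel-summation step in the $L^2(\mu)$-valued base case.
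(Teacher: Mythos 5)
Your overall scheme --- peeling off the \emph{slowest} function by partitioning into its level sets, rather than (as the paper does) re-parametrizing time by $g_1^{-1}$ so that the fastest function becomes the new time variable and the remaining iterates $[\,g_i(g_1^{-1}(n))\,]$ become locally constant --- is a legitimate alternative route, but the strengthened induction hypothesis on which it rests is false as stated. The windowed hypothesis is given for the raw sequences $A_i(n)$, not for the composed sequences $A_i([g_i(n)])$, and composition with $[g_i(\cdot)]$ destroys it: a window of length $N-M$ in the variable $n$ is compressed to a window of length roughly $g_i(N)-g_i(M)$ in the variable $k=[g_i(n)]$, which need not tend to infinity even when $N-M$ does, because the level sets of $[g_i]$ have length $\sim 1/g_i'\to\infty$. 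Concretely, take $g_1(x)=\sqrt{x}\in\mathcal{F}$ and $A_1(n)=(-1)^n f$ for a fixed nonzero $f\in L^\infty(\mu)$, so that $\tilde A_1=0$ with uniform Ces\`aro convergence; for the windows $[M,N-1]=[K^2,K^2+K-1]$ one has $[\sqrt{n}]=K$ throughout, hence $\frac{1}{N-M}\sum_{n=M}^{N-1}A_1([g_1(n)])=(-1)^K f$, which does not converge. So the claim that the strengthening ``costs nothing'' is wrong, and both your base case and the inner brackets of your inductive step are unproved (indeed false) in the generality in which you invoke them.

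The gap is repairable but requires reformulating the strengthening: one must restrict to windows containing unboundedly many level sets of the slowest function present (equivalently, $g_\ell(N)-g_\ell(M)\to\infty$), and then verify that the windows $B_k$ produced at each stage of the recursion satisfy this for the remaining tuple. They do, because eventual monotonicity of $g_{\ell-1}'/g_\ell'$ together with $g_\ell\prec g_{\ell-1}$ forces (via L'Hospital) $g_{\ell-1}'/g_\ell'\to\infty$, so each level set of $g_\ell$ contains about $g_{\ell-1}'/g_\ell'\to\infty$ level sets of $g_{\ell-1}$; one also needs to justify the regular-variation facts about $|I_k|$ and the weighted Toeplitz/Abel step in this windowed form. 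This bookkeeping is exactly what the paper's substitution $\tilde g_i=g_i\circ g_1^{-1}$ (via Lemmas~2.7 and~2.12 and Theorems~3.5--3.6 of \cite{BK}) avoids: after re-parametrization the only windowed average ever needed is that of $A_1(n)$ itself, which is the hypothesis. As written, your proof has a genuine hole at its central claim.
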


\begin{proof}
We use induction on $\ell.$ Assuming that $\tilde{A}_1=0$ we will show that the required limit is equal to $0.$
For $\ell=1$ and $g_1$ is positive, we define $\phi(n):=|\{m\in \mathbb{N}:\;[g_1(m)]=n\}|$ and $\Phi(n):=\sum_{k=0}^n \phi(k).$ By \cite[Theorem~3.5]{BK}, we have that \[\lim_{N\to\infty}\norm{\frac{1}{N}\sum_{n=1}^N A_1([g_1(n)])}_2=\lim_{N\to\infty}\norm{\frac{1}{\Phi(N)}\sum_{n=1}^N \phi(n)A_1(n)}_2.\] We have to show that this last limit is equal to $0.$ This follows from the hypothesis by using \cite[Theorem~3.6]{BK}, since, from \cite[Lemma~2.5]{BK}, we have that $\lim_{n\to\infty}\frac{\phi(n)}{\Phi(n)}=0$ and from \cite[Lemma~2.4]{BK} that $\lim_{n\to\infty}\phi(n)=\infty.$

The case where $g_1$ is negative, follows by the fact that $[g_1(n)]=-[-g_1(n)]-1$ in a set of density $1.$ Indeed, $[g_1(n)]=-[-g_1(n)]$ only happens when $g_1(n)$ is an integer, i.e., at most $g_1(N)$ times up to time $N$. The claim now follows by the sub-linearity of $g_1.$\footnote{ This base case can be viewed as a ``change of variable'' method
with the crucial remark that the uniform Ces\'{a}ro average is been replaced with a standard one (see also the proof of Proposition~\ref{P:1}).}

Assuming that the result holds for $\ell-1$ terms, we will show that $$\lim_{N\to\infty}\norm{\frac{1}{N}\sum_{n=1}^N A_1([g_1(n)])\cdots A_\ell([g_\ell(n)])}_2=0.$$ For $2\leq i\leq \ell$ we let $\tilde{g}_i(x)=g_i(g^{-1}_1(x))$ which belong, by \cite[Lemma~2.7]{BK}, in $\CF.$ By \cite[Lemma~2.12]{BK}, we have that $$[g_i(n)]=[\tilde{g}_i([g_1(n)])]$$ for a set of $n$'s of density $1.$
By the $\ell=1$ case it suffices to show that \[\frac{1}{|I_N|}\sum_{n\in I_N}  A_1(n)\cdot A_2([\tilde{g}_{2}(n)])\cdots A_{\ell}([\tilde{g}_{\ell}(n)])\] converges to $0$ in $L^2(\mu)$ as $N\to\infty,$ where $(I_N)_N$ is a sequence of intervals of integers with lengths increasing to infinity.

Since $\tilde{g}_i(x+1)-\tilde{g}_i(x)$ converge to $0$ and have eventually constant sign, as in the proof of the base case of \cite[Proposition~4.2]{F3}, each interval $I_N,$ $N\in \mathbb{N},$ can be decomposed (except a finite set of fixed cardinality) into subintervals with lengths tending (as $N\to\infty$) to infinity, in such a way that the sequences $([\tilde{g}_i(n)])_n,$ $2\leq i\leq \ell,$ are constant on each of the subintervals.   So, without loss of generality, we may assume that  there exist sequences of integers $(c_{i,N})_N,$ $2\leq i\leq \ell,$ such that $[\tilde{g}_i(n)]=c_{i,N}$ for all $n\in I_N.$ Using the fact that the quantity $ A_2(c_{2,N})\cdots A_{\ell}(c_{\ell,N})$ has uniformly bounded $L^\infty(\mu)$-norm and the hypothesis on the uniform convergence of the average of $A_1(n),$ we get the result. 
\end{proof}

\begin{remark*} By the relation 
\[\frac{g'_{i}}{g'_j}=\prod_{k=j}^{i-1} \left(\frac{g'_{k+1}}{g'_k}\right),\] and working inductively, using also the fact that a function which converges monotonically to $0$ has the opposite sign from its derivative, we have that the assumption ``$g'_{i+1}/g'_i$ is eventually monotone,'' implies that ``$g'_i/g'_j$ is eventually monotone for any $j\leq i.$''
\end{remark*}

Immediate implication of Lemma~\ref{L:sub-linear}, together with the remark after it, is the following.

\begin{proof}[Proof of Theorem~\ref{T:sub-linear}]
For any $1\leq i\leq \ell$ we let $A_i(n)=T_i^n f_i.$ Using von Neumann's uniform mean ergodic theorem, we have $$\lim_{N-M\to\infty}\norm{\frac{1}{N-M}\sum_{n=M}^{N-1} A_i(n)-\mathbb{E}(f_i|\mathcal{I}(T_i))}_2=0,$$ for all $1\leq i\leq \ell,$ and the result follows by Lemma~\ref{L:sub-linear}.
\end{proof}

\begin{remark*}
As it was mentioned in Section~\ref{results}, the conclusion of Theorem~\ref{T:sub-linear} also holds for functions $g_\ell\prec \ldots\prec g_1\in \mathcal{T}_0.$

Indeed, this follows by the fact that, using III (ii), the corresponding pointwise result \cite[Theorem~1.1]{DKS} holds for functions $g_\ell\prec \ldots\prec g_1\in \mathcal{T}_0$  (via \cite[Proposition~2.2]{DKS} and \cite[Theorem~4.1]{DKS}--see comments after \cite[Corollary~4.2]{DKS}).  
\end{remark*}


\section{Nice families of tempered functions \\ and their invariance under the van der Corput operation}\label{S:nice}

As in the polynomial (\cite{CFH}) and Hardy field functions case (\cite{F3}), in order to show convergence under the weakly mixing assumption for multiple (commuting) $T_i$'s, we do so in a more general setting, namely, for a ``nice'' family of functions (see definition below).



\medskip

We start by reminding the reader that we denote the shift of a function $g$ by $h\in \mathbb{R}$ with $S_h g,$ and by recalling a result from \cite{BK} which will be used many times in what follows:

\begin{lemma}[\mbox{\cite[Lemma~2.3]{BK}}]\label{L:BK}
Let $g\in C^\infty(\mathbb{R}^+)$ such that $\lim_{x\to\infty}\frac{xg'(x)}{g(x)}\in\mathbb{R}.$ Then, for every $h\in \mathbb{R},$ $\lim_{x\to \infty}\frac{S_h g(x)}{g(x)}=1.$
\end{lemma}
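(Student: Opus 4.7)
The plan is to reduce the multiplicative limit $S_h g(x)/g(x) \to 1$ to an additive statement about $\log g$, exploiting the fact that the hypothesis is precisely an asymptotic statement about the logarithmic derivative $g'/g$.

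First I would note that for $xg'(x)/g(x)$ to have a real limit, $g$ must be eventually nonzero, and hence (by continuity) of constant sign on some halfline $[x_0,\infty)$; replacing $g$ by $-g$ if necessary, assume $g>0$ on $[x_0,\infty)$. Set $\alpha:=\lim_{t\to\infty} tg'(t)/g(t)$. Then $\log g$ is smooth on $[x_0,\infty)$ with $(\log g)'=g'/g$, so for $x$ sufficiently large that $x$ and $x+h$ both exceed $x_0$,
\[
\log \frac{g(x+h)}{g(x)} \;=\; \int_x^{x+h} \frac{g'(t)}{g(t)}\,dt.
\]

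The main step is then a routine estimate. Given $\varepsilon>0$, the hypothesis produces $M\geq x_0$ with $|tg'(t)/g(t)|\leq |\alpha|+\varepsilon$ for all $t\geq M$, i.e.\ $|g'(t)/g(t)|\leq (|\alpha|+\varepsilon)/t$. For $x$ large enough that the interval between $x$ and $x+h$ lies in $[M,\infty)$,
\[
\left|\int_x^{x+h} \frac{g'(t)}{g(t)}\,dt\right| \;\leq\; (|\alpha|+\varepsilon)\left|\log\frac{x+h}{x}\right| \;\longrightarrow\; 0 \qquad (x\to\infty).
\]
Exponentiating yields $S_h g(x)/g(x) = g(x+h)/g(x) \to e^0 = 1$, which is the claim.

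There is essentially no obstacle: the entire content lies in recognising that the hypothesis means $g'/g = \alpha/t + o(1/t)$, after which the conclusion drops out from $\log((x+h)/x)\to 0$. The one point that deserves care is verifying that $g$ is eventually nonzero and of constant sign so that $\log g$ is legitimate; as noted, this is automatic from the existence of a finite real limit for $xg'(x)/g(x)$.
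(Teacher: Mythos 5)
Your proof is correct. The paper itself gives no argument for this lemma --- it is imported verbatim as \cite[Lemma~2.3]{BK} --- so there is nothing internal to compare against; your route (writing $\log\frac{g(x+h)}{g(x)}=\int_x^{x+h}\frac{g'(t)}{g(t)}\,dt$, bounding the integrand by $(|\alpha|+\varepsilon)/t$, and using $\log\frac{x+h}{x}\to 0$) is the standard one, and you correctly handle the two points that need care, namely that $g$ is eventually of constant sign so that $\log g$ makes sense, and that $h$ may be negative.
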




\begin{definition*}
Let $g\in \CT$ and $\CC(g)$ be the family of functions which contains all integer combinations of shifts of $g,$ i.e.,
\[\CC(g)=\Big\{\sum_{i=1}^\ell k_i S_{h_i}g:\;k_i\in \Z,\;h_i\in \omega,\;\ell\in\N \Big\}.\]
\end{definition*}

A nice family of functions consists of linear combinations of functions from $\mathcal{T}$. 

\begin{definition*}\label{D:1}
Let $g_1,\ldots,g_\ell\in \CT,$ $g_{i,j}\in\CC(g_i)$ for $1\leq i\leq \ell,$ $1\leq j\leq m$ and $\CA_i:=(g_{i,1},\ldots,g_{i,m}),$ $1\leq i\leq \ell.$ We call the ordered family $(\CA_1,\ldots,\CA_\ell):=((g_{1,1},\ldots,g_{\ell,1}),\ldots,$ $(g_{1,m},\ldots,g_{\ell,m}))$\footnote{ 
Note that $\CA_i$ records the iterates of $T_i,$ while  $(\CA_1,\ldots,\CA_\ell)$ records the iterates of the products in the order that they appear in the expression of Proposition~\ref{P:1}.} of $\ell$-tuples of functions \emph{nice}  if:
\begin{enumerate}
\item $1\prec g_{1,1}-g_{1,j}$ and $g_{1,j}\ll g_{1,1}$ for $2\leq j\leq m;$

\item  $g_{i,j}\prec g_{1,1}$ for $2\leq i\leq \ell,$ $1\leq j\leq m;$ and

\item $g_{i,1}-g_{i,j}\prec g_{1,1}-g_{1,j}$ for $2\leq i\leq \ell,$ $2\leq j\leq m.$
\end{enumerate}

\end{definition*}

While running the PET induction, using vdC-operations, we deal with integer combinations of shifts of functions, so, the first step is to understand how the iterates behave and how their ``complexity'' changes through these operations. We first show some helpful lemmas and then, in Lemma~\ref{L:vdC red}, that the niceness notion is preserved under the vdC-operations and that this can be done in a way that the new nice family which is obtained has strictly smaller complexity than the previous one.

\medskip

It is easy to see that if $g\in \CT_i,$ $i\geq -1,$ then, for every $h\neq 0,$ we also have that $S_h g\in \CT_i.$ (Indeed, by Lemma~\ref{L:BK} we have that the limit, as $x\to\infty$, of
\[\frac{x (S_h g)^{(j+1)}(x)}{(S_h g)^{(j)}(x)}=\frac{x g^{(j+1)}(x)}{g^{(j)}(x)}\cdot\frac{g^{(j+1)}(x+h)}{g^{(j+1)}(x)}\cdot\frac{g^{(j)}(x)}{g^{(j)}(x+h)}\] exists for all $j\geq 0,$ hence $S_h g\in \mathcal{R}.$ The claim now follows by III (iv) as $S_h g\sim g$ again by Lemma~\ref{L:BK}.) The following lemma informs us about the ``order'' of the difference $S_h g - g.$

\begin{lemma}\label{L:1}
Let $g\in \CT.$ Then for any non-zero $h\in \mathbb{R}$ we have that $S_h g - g \sim g'\sim g/x.$
\end{lemma}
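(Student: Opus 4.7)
The plan is to establish the two asymptotic equivalences $S_h g - g \sim g'$ and $g'\sim g/x$ separately. The second one is essentially immediate from the definition of $\CT$: if $g\in \CT_i$, then $\lim_{x\to\infty} xg'(x)/g(x)=\alpha$ for some $\alpha\in (i,i+1]$, in particular $\alpha\neq 0$, so $g'(x)/(g(x)/x)\to\alpha$, which is exactly $g'\sim g/x$. As a by-product, $g'$ is eventually nonzero and of constant sign.

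For the first equivalence, I would begin with the integral representation
\[
(S_h g - g)(x)=g(x+h)-g(x)=\int_0^h g'(x+t)\,dt,
\]
and aim to show $(S_h g-g)(x)/g'(x)\to h$ as $x\to\infty$; since $h\neq 0$, this gives $S_h g-g\sim g'$. Everything reduces to proving that $g'(x+t)/g'(x)\to 1$ uniformly in $t$ on the closed interval joining $0$ and $h$. (This is slightly stronger than what a direct application of Lemma~\ref{L:BK} to the function $g'$ gives, namely pointwise convergence in $t$; the hypothesis of that lemma is satisfied by $g'$ because $g\in \mathcal R$ ensures $xg''(x)/g'(x)$ has a real limit.)

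To upgrade to uniform convergence I would exploit $g\in\CR$ more directly. From $xg''(x)/g'(x)\to\beta\in\R$ one gets $|g''(x)/g'(x)|\leq C/x$ for some constant $C$ and all $x$ large enough. Integrating this bound on the interval joining $x$ and $x+t$ yields
\[
\left|\log\frac{g'(x+t)}{g'(x)}\right|\leq \int_x^{x+t}\frac{|g''(s)|}{|g'(s)|}\,ds \leq C\,\log\!\left(1+\frac{|t|}{x}\right),
\]
which tends to $0$ uniformly for $t$ in a bounded interval. Hence $g'(x+t)/g'(x)\to 1$ uniformly, so $\int_0^h g'(x+t)/g'(x)\,dt\to h$, as required.

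The only real obstacle is a bookkeeping one: one must ensure that $g'$ stays nonzero on the whole interval $[x,x+h]$ so that the ratios and logarithms above make sense. This follows from $xg'/g\to\alpha\neq 0$, which forces $g'$ to be nonzero and of constant sign for all sufficiently large $x$, combined with the bound $|g''/g'|\leq C/x$, which prevents $g'$ from vanishing on any bounded neighbourhood once it is nonzero far enough out. Apart from this, the argument is a routine application of the fundamental theorem of calculus together with the defining asymptotic properties of $\CT\subseteq \CR$.
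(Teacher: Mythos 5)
Your proof is correct, and for the main equivalence $S_h g-g\sim g'$ it takes a genuinely different (though related) route from the paper's. The paper writes $g(x+h)-g(x)=hg'(\xi_x)$ by the mean value theorem and squeezes $g'(\xi_x)/g'(x)$ between $1$ and $g'(x+h)/g'(x)$ using the eventual monotonicity of $g'$ (true, but left implicit: $g'$ is again in $\CT$ of one degree lower, or, in degree $0$, tends monotonically to $0$ by the Fej\'er condition); the convergence $g'(x+h)/g'(x)\to 1$ is then extracted from the $j=0$ ratio condition alone via the identity $\frac{g'(x+h)}{g'(x)}=\frac{(x+h)g'(x+h)}{g(x+h)}\cdot\frac{g(x+h)}{g(x)}\cdot\frac{x}{x+h}\cdot\frac{g(x)}{xg'(x)}$ together with Lemma~\ref{L:BK} applied to $g$. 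You instead represent the increment as $\int_0^h g'(x+t)\,dt$ and obtain the uniform convergence $g'(x+t)/g'(x)\to 1$ by integrating the bound $|g''/g'|\leq C/x$, which uses the $j=1$ condition in the definition of $\CR$. The trade-off: the paper's argument consumes only the first ratio condition but silently relies on monotonicity of $g'$, whereas yours dispenses with monotonicity entirely at the cost of invoking one more derivative-ratio condition; your bookkeeping on the eventual non-vanishing of $g'$ along the whole interval $[x,x+h]$ (forced by $xg'/g\to\alpha\neq 0$) is exactly what is needed to make the logarithmic integration legitimate. The treatment of $g'\sim g/x$ is identical in both and immediate from $\alpha>i\geq 0$.
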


\begin{proof}
By the definition of $\CT,$ we immediately get that $g'\sim g/x.$ 

Suppose without loss of generality that $h>0.$ Using the mean value theorem we get \[ g(x+h)-g(x)=hg'(\xi_x) \] for some $\xi_x\in (x,x+h).$ Because of monotonicity, $g'(\xi_x)/g'(x)$ is squeezed between $1=g'(x)/g'(x)$ and $g'(x+h)/g'(x).$ Using Lemma~\ref{L:BK} we have that $g(x+h)/g(x)\to 1,$ so \[\frac{g'(x+h)}{g'(x)}=\frac{(x+h)g'(x+h)}{g(x+h)}\cdot \frac{g(x+h)}{g(x)}\cdot \frac{x}{x+h}\cdot \frac{g(x)}{xg'(x)}\to \alpha\cdot 1\cdot 1\cdot \frac{1}{\alpha}=1,\]  from where it follows that $S_h g-g\sim g'.$ 
\end{proof}

\begin{lemma}\label{L:-1}
Let $g\in \CT_i$ and $g_1\in \CC(g).$ Then either $g_1(x)\to 0$ as $x\to\infty,$ or there exists $i_0\in \{0,\ldots,i\}$ such that $g_1\sim g^{(i_0)}.$ In both cases, we have that $g_1\in \CT_{i-i_0}$ for some $0\leq i_0\leq i+1.$\footnote{ Hence, for each function $g_1\in  \CC(g),$ $g\in \CT,$ we have that $\deg g_1$ is well defined.}
\end{lemma}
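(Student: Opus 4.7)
The plan is to combine the definition of $\CC(g)$ with a Taylor expansion of the shifts $S_{h_\ell}g$ around $x$ and then isolate the leading asymptotic term. Writing $g_1=\sum_{\ell=1}^L k_\ell S_{h_\ell}g$ and using that $g\in\CT_i\subseteq C^\infty(\mathbb{R}^+)$, I would expand to any fixed order $N$:
\[g(x+h_\ell)=\sum_{j=0}^{N}\frac{h_\ell^{j}}{j!}g^{(j)}(x)+\frac{h_\ell^{N+1}}{(N+1)!}g^{(N+1)}(\xi_\ell(x)),\]
with $\xi_\ell(x)\in(x,x+h_\ell)$. Summing over $\ell$ yields
\[g_1(x)=\sum_{j=0}^{N}\lambda_j g^{(j)}(x)+R_N(x),\qquad \lambda_j:=\sum_{\ell=1}^L k_\ell\,\frac{h_\ell^j}{j!},\]
where $R_N(x)\to 0$ once $N\geq i$, since $g^{(N+1)}(x)\to 0$ by III (iii) and the remainder inherits this via uniform smallness on $[x,x+h_\ell]$.

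Next I would set $i_0:=\min\{j\geq 0:\lambda_j\neq 0\}$ whenever that set is nonempty (otherwise $g_1\equiv R_N\to 0$). Choosing $N$ larger than $i_0$ and invoking III (iii) twice—once to see that $g^{(j)}(x)/g^{(i_0)}(x)\to 0$ for $j>i_0$, and once to see that $R_N(x)/g^{(i_0)}(x)\to 0$—gives $g_1\sim\lambda_{i_0}g^{(i_0)}$. If $i_0\geq i+1$, then $g^{(i_0)}(x)\to 0$, so $g_1\to 0$ as well, placing $g_1$ in $\CT_{-1}$ and corresponding to $i_0=i+1$ under the convention of IV (i). If $i_0\leq i$, then iterating III (v) gives $g^{(i_0)}\in\CT_{i-i_0}$, and the task reduces to promoting the asymptotic equivalence $g_1\sim g^{(i_0)}$ to membership $g_1\in\CT_{i-i_0}$ via III (iv).

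For III (iv) I need $g_1\in\mathcal{R}$. Since shifting commutes with differentiation, $g_1^{(k)}=\sum_\ell k_\ell S_{h_\ell}g^{(k)}$ has exactly the same structural form as $g_1$, so applying the identical Taylor analysis to $g^{(k)}$ in place of $g$ yields $g_1^{(k)}\sim\lambda_{i_0}\,g^{(k+i_0)}$ for every $k\in\omega$ (the scalars $\lambda_j$ are invariant under differentiation). Consequently
\[\lim_{x\to\infty}\frac{x g_1^{(k+1)}(x)}{g_1^{(k)}(x)}=\lim_{x\to\infty}\frac{x g^{(k+i_0+1)}(x)}{g^{(k+i_0)}(x)},\]
and the right-hand limit exists in $\mathbb{R}$ because $g\in\mathcal{R}$. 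Hence $g_1\in\mathcal{R}$ and III (iv) finishes the argument. The main subtlety I foresee is sustaining the equivalence $g_1^{(k)}\sim\lambda_{i_0}g^{(k+i_0)}$ in the regime $k+i_0>i$, where the denominator itself tends to zero; the remedy is to take the Taylor order $N$ large enough that III (iii) forces the remainder $g^{(k+N+1)}$ to be negligible against the (also vanishing) leading term $g^{(k+i_0)}$.
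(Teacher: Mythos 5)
Your proof is correct and follows essentially the same route as the paper: Taylor-expand each shift $S_{h_\ell}g$, identify the leading derivative term $g^{(i_0)}$, and conclude via III (iii)--(v) (the paper phrases the final step as ``imitating the argument in IV (i)''). Your extra care in verifying $g_1\in\mathcal{R}$ by re-running the expansion on each $g^{(k)}$ is a legitimate, and somewhat more explicit, way of carrying out that imitation in the presence of the Taylor remainder.
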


\medskip

\begin{proof}
Using Taylor's theorem, each $S_h g$ has the form
\[S_h g(x)=\sum_{j=0}^{i}\frac{g^{(j)}(x)}{j!}h^j+\frac{g^{(i+1)}(\xi_{x,h})}{(i+1)!}h^{i+1},\] for some $\xi_{x,h}$ between $x$ and $x+h.$ Noting by the proof of the previous lemma that $g^{(i+1)}(\xi_{x,h})\sim g^{(i+1)}(x),$ writing $g_1$ in the form \begin{equation}\label{E:g-form}
    g_1(x)=\sum_{j=i_0}^i c_j g^{(j)}(x)+e(x),
\end{equation} 
 where $c_j\in \mathbb{R}$ and $e\sim g^{(i+1)},$ imitating the argument in IV (i), we have the first conclusion if every $c_j$ is $0,$ while the second one follows for the smallest $0\leq i_0\leq i$ with $c_{i_0}\neq 0.$ 
\end{proof}




 The form \eqref{E:g-form} will be used in the sequel and we will refer to it as the \emph{$g$-form} of $g_1.$
 
 \medskip
 
\noindent We now alter the definition from \cite{F3} of equivalent functions in order to fit our setting.
 
\begin{definition*} We say that $g_1, g_2\in \mathcal{C}(g),$ with $g_1, g_2\in \CT_{i_0}$ are \emph{equivalent}, and we write $g_1\cong g_2,$ if $g_1-g_2\in \CT_{j_0}$ for some $-1\leq j_0<i_0.$\footnote{ Reflecting once more the restrictions that one has when working with the class of functions $\CT,$ in \cite{F3} the notion of equivalence is defined in general for functions of polynomial growth. It is sufficient though for one to define it for functions $g_1, g_2\in \mathcal{C}(g)$ to have $g_1-g_2\in \CT$ (see remark after the definition).}
\end{definition*} 

\begin{remark*}
One here has to be careful with the condition $g_1, g_2\in \mathcal{C}(g)$ as the difference of two functions $g_1, g_2\in \CT_{i_0},$ $g_1-g_2,$ may not even be in $\CT.$   

Indeed, let for example $g_1(x)=x^{1/2}+\log^{1+\varepsilon}x$ and $g_2(x)=x^{1/2}.$ Then, even though $g_1, g_2\in \CT_0$ we have that $g_1-g_2\notin \CT.$

\medskip

On the other hand, let $g\in \CT_{i_0}$ and $g_1, g_2\in \mathcal{C}(g)$ with $1\prec g_1, g_2$ and $g$-forms
\[g_1(x)=\sum_{j=i_1}^{i_0} c_j g^{(j)}(x)+e_1(x),\;\;\text{and}\;\;g_2(x)=\sum_{j=i_2}^{i_0} c'_j g^{(j)}(x)+e_2(x).\]
We have that $g_1\ncong g_2$ iff either $i_1\neq i_2$ or $i_1=i_2$ and $c_{i_1}\neq c'_{i_1}.$ So, $g_1\cong g_2$ iff $i_1=i_2$ and $c_{i_1}=c'_{i_1}.$ Hence $g_1\cong g_2$ iff $\lim_{x\to\infty}(g_1(x)/g_2(x))=1.$ 

This last bi-conditional statement is yet again false without the assumption $g_1, g_2\in \mathcal{C}(g),$ as in the aforementioned explicit example, $\lim_{x\to\infty}(g_1(x)/g_2(x))=1$ but $g_1\ncong g_2.$ 
\end{remark*}

\begin{lemma}\label{L:L2}
Let $g\in \CT$ and  $g_1, g_2\in \CC(g)$ with $g_2\ll g_1$ and $g_1(x)\nrightarrow 0$ as $x\to \infty.$
\begin{enumerate}
\item If $g_1\ncong g_2,$ then $S_h g_1-g_2\sim g_1$ for all $h\in \mathbb{R}.$

\item If $g_1\cong g_2,$ then $S_h g_1-g_2\ll g_1'$ for all $h\in \mathbb{R},$ and $S_h g_1-g_2\sim g_1'$ for all $h\in \mathbb{R}\setminus\{0\}.$
\end{enumerate}
\end{lemma}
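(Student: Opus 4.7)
The plan is to work with the $g$-form representations of $g_1$ and $g_2$ supplied by Lemma~\ref{L:-1}: write
\[g_1(x)=\sum_{j=i_1}^{i_0}c_j g^{(j)}(x)+e_1(x),\qquad g_2(x)=\sum_{j=i_2}^{i_0}c'_j g^{(j)}(x)+e_2(x),\]
with leading coefficients $c_{i_1},c'_{i_2}\ne 0$ (and remainders $e_i\sim g^{(i_0+1)}$), and compare everything to the growth rates of the derivatives $g^{(j)}$. By III (iii) the orders of the $g^{(j)}$ are strictly decreasing, which, combined with the hypothesis $g_2\ll g_1$, forces $i_1\le i_2$.

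For part (1), $g_1\not\cong g_2$ splits into two subcases: either $i_2>i_1$, in which case $g_2/g_1\to 0$ by III (iii); or $i_2=i_1$ with $c'_{i_1}\ne c_{i_1}$, in which case $g_2/g_1\to c'_{i_1}/c_{i_1}\ne 1$. In either situation Lemma~\ref{L:BK} gives $S_h g_1/g_1\to 1$, so
\[\frac{S_h g_1-g_2}{g_1}\longrightarrow 1-\lim_{x\to\infty}\frac{g_2(x)}{g_1(x)}\ne 0,\]
which is exactly the conclusion $S_h g_1-g_2\sim g_1$.

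For part (2), the hypothesis $g_1\cong g_2$ forces $i_1=i_2$ and $c_{i_1}=c'_{i_1}$, so that the leading $g^{(i_1)}$-terms cancel and
\[g_1-g_2=\sum_{j=i_1+1}^{i_0}(c_j-c'_j)g^{(j)}(x)+(e_1-e_2)(x)\]
has order at most that of $g^{(i_1+1)}$. Since $g_1\sim g^{(i_1)}$, property III (iv) upgrades this to $g_1'\sim g^{(i_1+1)}$, and hence $g_1-g_2\ll g_1'$. I then split $S_h g_1-g_2=(S_h g_1-g_1)+(g_1-g_2)$ and invoke Lemma~\ref{L:1}, which supplies $S_h g_1-g_1\sim g_1'$ for $h\ne 0$ and the identically zero function for $h=0$. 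Together with the previous bound this immediately yields $S_h g_1-g_2\ll g_1'$ for every $h$. For the refinement $\sim g_1'$ when $h\ne 0$, I would read off from the mean-value step in the proof of Lemma~\ref{L:1} that $(S_h g_1-g_1)/g_1'\to h$, and then check by a leading-coefficient computation that the asymptotic ratio of $(S_h g_1-g_2)$ to $g_1'$ has a nonzero limit.

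The main obstacle is this last refinement in case~(2): a priori the $h$-linear contribution coming from $S_h g_1-g_1$ could cancel the $h$-independent contribution from $g_1-g_2$ at an exceptional value of $h$. Ruling this out requires a careful inspection of the $g^{(i_1+1)}$-level coefficients, exploiting the fact that both $g_1$ and $g_2$ belong to $\mathcal{C}(g)$ so that their $g$-forms align consistently. The remaining estimates are routine applications of Lemma~\ref{L:BK}, Lemma~\ref{L:1}, and the $g$-form decomposition from Lemma~\ref{L:-1}.
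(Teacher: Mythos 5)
Your route is the same as the paper's: $g$-forms from Lemma~\ref{L:-1}, Lemma~\ref{L:BK} to get $S_hg_1\sim g_1$, and Lemma~\ref{L:1} for the first difference. Part (i) and the bound $S_hg_1-g_2\ll g_1'$ in part (ii) are complete and agree in substance with the paper's argument.

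The obstacle you flag in the refinement of (ii) is genuine, and it is exactly the point at which the paper's own proof is thin. The paper writes
\[S_h g_1-g_2=c_{i_1}\bigl(S_h g^{(i_1)}-g^{(i_1)}\bigr)+\sum_{j=i_1+1}^{i}\bigl(c_{j}S_h g^{(j)}-c'_{j}g^{(j)}\bigr)+S_h e_1-e_2\]
and asserts that the first term, which is $\sim c_{i_1}h\,g^{(i_1+1)}$, dominates; but the $j=i_1+1$ summand contributes $(c_{i_1+1}-c'_{i_1+1})g^{(i_1+1)}$, of exactly the same order. One finds
\[\frac{S_hg_1-g_2}{g^{(i_1+1)}}\longrightarrow c_{i_1}h+c_{i_1+1}-c'_{i_1+1},\]
an affine function of $h$ with nonzero slope, which therefore vanishes for exactly one $h\in\R$. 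The cancellation cannot be ruled out by a coefficient inspection: take $g_1=g$ and $g_2=S_{h_0}g$ with $h_0\in\N$; then $g_2\ll g_1$ and $g_1\cong g_2$, yet $S_{h_0}g_1-g_2\equiv 0\not\sim g_1'$. So the correct form of the refinement is that $S_hg_1-g_2\sim g_1'$ for all but at most one $h$ (equivalently, whenever $c_{i_1}h+c_{i_1+1}-c'_{i_1+1}\neq 0$), which is all the van der Corput applications over $h\in\N$ require, since an exceptional $h$ produces a bounded difference that is discarded. In short, your proof is as complete as the paper's; the step you could not close is not closable as stated, and the refinement needs the one-exceptional-$h$ caveat.
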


\begin{proof}
Let $g\in \CT_i.$ If $g_1\sim g^{(i_1)}$ for some $0\leq i_1\leq i,$ then, by Lemma~\ref{L:-1}, and the comment right before Lemma~\ref{L:1}, we have that both $g_1, S_h g_1\in \mathcal{T}_{i-i_1}.$ 


(i) If $g_2(x)\to 0$ as $x\to\infty,$ then the result follows by Lemma~\ref{L:BK}. Otherwise, let \[g_1(x)=\sum_{j=i_1}^{i} c_j g^{(j)}(x)+e_1(x),\;\;\text{and}\;\;g_2(x)=\sum_{j=i_2}^{i} c'_j g^{(j)}(x)+e_2(x)\] be the $g$-forms of $g_1, g_2.$ Since $ g_1\ncong g_2,$ we get that $\lim_{x\to\infty}(g_2(x)/g_1(x))=0$ if $i_2>i_1$ and $\lim_{x\to\infty}(g_2(x)/g_1(x))=c'_{i_1}/c_{i_1}\neq 0,$ if $i_1=i_2,$ so, in any case $S_h g_1-g_2\sim g_1.$

(ii) Since $g_1\cong g_2$ we get that $i_1=i_2$ and $c_{i_1}=c'_{i_1}.$ So $$S_h g_1-g_2=c_{i_1}(S_h g^{(i_1)}-g^{(i_1)})+\sum_{j=i_1+1}^i (c_{j}S_h g^{(j)}-c'_{j}g^{(j)})+S_h e_1-e_2.$$
Using Lemma~\ref{L:1}, we get that for any $h\neq 0,$ we have $$S_h g_1-g_2\sim S_h g^{(i_1)}-g^{(i_1)}\sim g^{(i_1+1)}\sim g'_1.$$ Finally, for $h=0,$ we have that $$g_1-g_2\ll g^{(i_1+1)}\sim g'_1.$$ The proof is now complete.
\end{proof}

\medskip

In the following, we assume without loss of generality that a nice family $(\CA_1,\ldots,\CA_\ell)$ contains no $\ell$-tuples of bounded functions (equivalently, of functions which converge to $0$)  for otherwise we can remove them (with the use of Cauchy-Schwarz inequality). 

\medskip

We will now define the weight (i.e., complexity) of a nice family.

\begin{definition*}
Let $g_1\in \mathcal{T}_d,$ for some $d\in \omega.$ We define \[\CA_1'=\{g_{1,j}\in \CA_1:\;|g_{1,j}(x)|\to\infty\;\;\text{as}\;\;x\to\infty\};  \]
and, for $2\leq i\leq \ell,$ 
\[\CA_i'=\{g_{i,j}\in \CA_i:\;|g_{i,j}(x)|\to\infty\;\;\text{and}\;\;g_{i',j}(x)\to 0\;\;\text{as}\;\;x\to\infty, \;\;\text{for}\;\;i'<i\}. \]
For $1\leq i\leq \ell$ and $0\leq j\leq d,$ let $w_{i,j}$ be the number of non-equivalent distinct classes of functions from $\mathcal{T}_j$ in $\CA_i'.$ The \emph{weight} of $(\CA_1,\ldots,\CA_\ell)$ is defined to be the matrix $(w_{i,d-j})_{1\leq i\leq \ell, 0\leq j\leq d}$. We order the weights, after viewing the matrix $(w_{i,d-j})$ as the vector $(w_{1,d},\ldots,w_{1,0},\ldots,w_{\ell,0}),$ lexicographically.\footnote{ The weight can be defined for any family of functions from $\mathcal{T},$ but we will only consider nice ones.}
\end{definition*}


\medskip

Analogously to \cite[Lemma~5.3]{F3}, we have that every strictly decreasing sequence of weights of nice families of $\ell$-tuples of functions from $\mathcal{T}$ is finite and eventually terminates at the zero vector.

\medskip

Following the proof of \cite[Lemma~5.5]{F3}, using the lemmas that we proved above, we will now show that not only the niceness notion is preserved via the vdC-operations but also under specific ones we achieve reduction of the complexity of the family.

\begin{lemma}\label{L:vdC red}
Let $(\CA_1,\ldots,\CA_\ell)$ be a nice family of $\ell$-tuples of functions, and suppose that $\deg(g_{1,1})\geq 1$. Then there exists $(\tilde{g}_1,\dots,\tilde{g}_\ell)\in \CA_1\cup\{0\}\times\cdots\times \CA_\ell\cup\{0\}$ such that for every $h\in\N,$ the family $(\tilde{g}_1,\ldots,\tilde{g}_\ell,h)\text{-}\emph{vdC}(\CA_1,\ldots,\CA_\ell)$ is nice with weight strictly smaller than that of $(\CA_1,\ldots,\CA_\ell).$ 
\end{lemma}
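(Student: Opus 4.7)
The strategy, following \cite[Lemma~5.5]{F3} and adapting it to the tempered setting, is to select the pivot by choosing an index $j_0 \in \{1, \ldots, m\}$ such that $g_{1, j_0}$ has minimal $\CT$-degree among the elements of $\CA_1'$, and setting
\[\tilde{g}_i := \begin{cases} g_{i, j_0} & \text{if } g_{i, j_0} \text{ is unbounded,} \\ 0 & \text{otherwise,} \end{cases}\]
for each $1 \leq i \leq \ell$. This mimics the classical polynomial PET choice of a pivot of minimal degree. Writing $d_0 := \deg g_{1, j_0}$, I would then verify two claims: (A) the family $(\tilde{g}_1, \ldots, \tilde{g}_\ell, h)\text{-}\text{vdC}(\CA_1, \ldots, \CA_\ell)$ is again nice, and (B) its weight is strictly smaller in the lex order.

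For (A), the new dominant $\ell$-tuple is inherited from the original $j = 1$ tuple: since $g_{1, 1}$ has degree $d$ and $\tilde{g}_1$ has degree $d_0 \leq d$, the first coordinate $g_{1, 1} - \tilde{g}_1$ retains degree $d$ by Lemma~\ref{L:L2}. The three niceness conditions transfer by direct computation using the structural Lemmas~\ref{L:BK}, \ref{L:1}, \ref{L:-1}, and~\ref{L:L2}: condition~(1) follows since $S_h(g_{1, 1} - g_{1, j}) \sim g_{1, 1} - g_{1, j}$ by Lemma~\ref{L:BK}, preserving unboundedness; condition~(2) holds because $\tilde{g}_i \prec g_{1, 1}$, so subtracting $\tilde{g}_i$ does not affect the dominating $\prec$-relation; condition~(3) reduces to the original via the cancellation $(g_{i, 1} - \tilde{g}_i) - (g_{i, j} - \tilde{g}_i) = g_{i, 1} - g_{i, j}$ (and the shifted analogue, where Lemma~\ref{L:BK} supplies the requisite $\sim$-equivalence).

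For (B), the key claim is $w_{1, d'}^{\mathrm{new}} = w_{1, d'}^{\mathrm{old}}$ for every $d' > d_0$ while $w_{1, d_0}^{\mathrm{new}} = w_{1, d_0}^{\mathrm{old}} - 1$, which yields a strict lex decrease. At degrees $d' > d_0$, for any $g_{1, j}$ of degree $d'$ both $g_{1, j} - \tilde{g}_1$ and $S_h g_{1, j} - \tilde{g}_1$ are $\cong g_{1, j}$, since their respective differences with $g_{1, j}$ have $\CT$-degree at most $\max(d' - 1, d_0) < d'$ by Lemmas~\ref{L:1} and~\ref{L:L2}, and equivalence classes are preserved. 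At degree $d_0$, the pivot contributes $g_{1, j_0} - \tilde{g}_1 = 0$ (discarded) and $S_h g_{1, j_0} - \tilde{g}_1 \sim g_{1, j_0}'$ of degree $d_0 - 1$, so the class $[\tilde{g}_1]$ leaves degree $d_0$ entirely; for every other $g_{1, j}$ at degree $d_0$ with $g_{1, j} \ncong \tilde{g}_1$, the $g$-form analysis of Lemma~\ref{L:-1} shows that the new first coordinates land in a distinct degree-$d_0$ class parameterised by the difference $c_j - c_0$ of the leading $g$-form coefficients, giving exactly the $w_{1, d_0}^{\mathrm{old}} - 1$ remaining classes. Since $\CA_1'$ has no members of degree below $d_0$ by the minimality of $d_0$, no new degree-$d_0$ class is spawned from below. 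The main obstacle is the careful bookkeeping at degree $d_0$ in (B), particularly tracking $\ell$-tuples whose first coordinate becomes bounded (for instance, when $g_{1, j} \cong \tilde{g}_1$ in the strict sense of the paper) and ensuring such tuples are correctly disposed of; this is delicate because, in contrast to the Hardy field setting of \cite{F3}, constants are not members of any $\CT_{j_0}$, so the boundedness-based discarding mechanism plays a more prominent role here than mere $\cong$-collisions.
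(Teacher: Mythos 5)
Your pivot selection rule is not the one that makes the lemma work, and it fails on the most important instance. You always choose $\tilde g_1=g_{1,j_0}$ unbounded from $\CA_1'$ (which is never empty, since $g_{1,1}\in\CA_1'$). But the lemma must also cover families containing $\ell$-tuples whose first coordinate is bounded while some later coordinate is unbounded, i.e.\ $\CA_i'\neq\emptyset$ for some $i\geq 2$; in that situation the paper takes the \emph{largest} such $i$, sets $\tilde g_1=\cdots=\tilde g_{i-1}=0$ and picks $\tilde g_i\in\CA_i'$, precisely so that the first $i-1$ rows of the weight matrix are untouched. With your rule, subtracting an unbounded $\tilde g_1$ from a bounded first coordinate $g_{1,j}$ produces the unbounded function $g_{1,j}-\tilde g_1\sim-\tilde g_1$, which creates a \emph{new} equivalence class in row $1$ at degree $\deg\tilde g_1$. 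Concretely, take $\ell=m=2$ and the nice family $((g_1,0),(0,g_2))$ with $g_2\prec g_1\in\CT_1$, say $g_1(x)=x^{3/2}$, $g_2(x)=x^{4/3}$ --- exactly the family used to deduce Proposition~\ref{P:wm_standard}. Its weight vector is $(1,0,1,0)$. Your rule forces $\tilde g_1=g_1$, $\tilde g_2=0$, and the vdC-operation yields the tuples $(S_hg_1-g_1,0)$, $(-g_1,S_hg_2)$, $(-g_1,g_2)$: the new $\CA_1'$ contains the degree-$1$ class of $-g_1$ \emph{and} the degree-$0$ class of $g_1'$, so the new weight vector begins $(1,1,\ldots)$, strictly \emph{larger} than before; moreover niceness fails, since $-g_1$ is not $\ll S_hg_1-g_1\sim g_1'$. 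So both of your claims (A) and (B) break here, and your stated identity $w^{\mathrm{new}}_{1,d_0}=w^{\mathrm{old}}_{1,d_0}-1$ is false in general.

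There is a second, smaller gap in the case where every first coordinate is unbounded: ``minimal $\CT$-degree in $\CA_1'$'' does not pin down the correct pivot. If $\CA_1'$ contains several inequivalent classes all of the top degree $d$, your rule may select $\tilde g_1\cong g_{1,1}$; then $S_hg_{1,1}-\tilde g_1\sim g_{1,1}'$ drops a degree while $S_hg_{1,2}-\tilde g_1$ does not (the leading $g$-form coefficients do not cancel), so the new dominant function no longer dominates the family and condition (i) of niceness fails. The correct rule is: if all of $\CA_1$ lies in a single equivalence class (or is a singleton), take $(\tilde g_1,\ldots,\tilde g_\ell)=(g_{1,1},\ldots,g_{\ell,1})$; otherwise take $\tilde g_1\ncong g_{1,1}$ of minimal degree in $\CA_1'$. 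Relatedly, your justification of condition (2) via ``$\tilde g_i\prec g_{1,1}$'' is false for $i=1$. You did flag the tuples with bounded first coordinate as delicate, but the resolution is not bookkeeping within your choice of pivot --- it is a different choice of pivot (namely $\tilde g_1=0$) whenever such tuples are present.
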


\begin{proof}  Let $1\leq i\leq \ell$ be the largest integer such that $\CA'_i\neq \emptyset.$ If $i\neq 1,$ we take $\tilde{g}_1=0,\ldots,\tilde{g}_{i-1}=0$ and $\tilde{g}_i$ a function in $\CA'_i$ (this defines the rest of the functions in the $\ell$-tuple). Then, via the vdC-operation, for any $h\in\mathbb{N},$ the first $(i-1)(d+1)$ coordinates of the vector remain unchanged, while there is a reduction in the next $d+1$.

If $i=1$ and $\CA_1$ is a singleton, we choose $(\tilde{g}_1,\dots,\tilde{g}_\ell)=(g_{1,1},\ldots,g_{\ell,1}).$ The reduction of the complexity follows now from Lemma~\ref{L:1}. If $\CA_1$ has more than one elements and $g\cong g_{1,1}$ for all $g\in\CA_1,$ we choose $(\tilde{g}_1,\dots,\tilde{g}_\ell)=(g_{1,1},\ldots,g_{\ell,1});$ otherwise, we choose $(\tilde{g}_1,\dots,\tilde{g}_\ell)$ with $\tilde{g}_1\ncong g_{1,1}$ with minimal degree in $\CA'_1.$ Using Lemmas~\ref{L:1} and ~\ref{L:L2} we get a reduced (because of the first $d+1$ coordinates) weight vector.

\medskip

We will now show that, for all $h\in\mathbb{N},$ the family $(\tilde{g}_1,\dots,\tilde{g}_\ell,h)$-vdC$(\CA_1,\ldots,\CA_\ell)$ is nice.\\

\noindent {\bf{Claim 1.}} The (i) property of a nice family holds for all $h\in\mathbb{N}.$ 

Indeed, we will show that for any $h\in\mathbb{N}$ we have:

(a) $S_h g_{1,1}-S_h g_{1,j}\to\infty$ for $2\leq j\leq m;$

(b) $S_h g_{1,1}-g_{1,j}\to\infty$ for $1\leq j\leq m;$

(c) $S_h g_{1,j}-\tilde{g_1}\ll S_h g_{1,1}-\tilde{g}_1$ for $2\leq j\leq m;$

(d) $g_{1,j}-\tilde{g}_1\ll S_h g_{1,1}-\tilde{g}_1$ for $1\leq j\leq m.$

\medskip

(a): It follows immediately since $g_{1,1}-g_{1,j}\to\infty$ for all $2\leq j\leq m.$

(b): If $g_{1,1}\ncong g_{1,j},$ we have that $S_h g_{1,1}-g_{1,j}\sim g_{1,1},$ while if $g_{1,1}\cong g_{1,j},$ we have that $S_h g_{1,1}-g_{1,j}\sim g'_{1,1}.$ The result follows in both cases by the growth condition of $g_{1,1}.$

(c) and (d): If $g_{1,1}\ncong \tilde{g}_{1},$ then $S_h g_{1,1}-\tilde{g}_1\sim g_{1,1}$ and the result follows since $g_{1,j}\ll g_{1,1}.$ If $g_{1,1}\cong \tilde{g}_{1},$ then by the construction, $g_{1,j}\cong \tilde{g}_{1}$ for all $1\leq j\leq m,$ so, $S_h g_{1,j}-\tilde{g}_1\sim g'_{1,j}$ and $g_{1,j}-\tilde{g}_1\ll g'_{1,j},$ hence the result follows since $g'_{1,j}/g'_{1,1}\to 1.$

\medskip

\noindent {\bf{Claim 2.}} Property (ii) of a nice family holds for all $h\in\mathbb{N}.$

It suffices to show that for $h\in\mathbb{N}$ we have:

(a) $S_h g_{i,j}-\tilde{g}_i\prec S_h g_{1,1}-\tilde{g}_1,$ for $2\leq i\leq \ell,$ $1\leq j\leq m;$ and 

(b) $g_{i,j}-\tilde{g}_i\prec S_h g_{1,1}-\tilde{g}_1,$ for $2\leq i\leq \ell,$ $1\leq j\leq m.$

\medskip

If $g_{1,1}\ncong \tilde{g}_{1},$ then $S_h g_{1,1}-\tilde{g}_1\sim g_{1,1}$ and the result follows for both (a) and (b) since $g_{i,j}\prec g_{1,1}.$

If $g_{1,1}\cong \tilde{g}_{1},$ we have $g_{1,j}\cong \tilde{g}_{1}$ for all $1\leq j\leq m,$ and $\tilde{g}_{i}=g_{i,1}$ for all $1\leq i\leq \ell.$ Then $S_h g_{1,1}-\tilde{g}_1\sim g'_{1,1}$ and (b) follows since $g_{i,j}-\tilde{g}_i=g_{i,j}-g_{i,1}\prec g_{1,1}-g_{1,j}\ll g'_{1,1}.$ To show (a), write, $S_h g_{i,j}-\tilde{g}_i=(S_h g_{i,j}-g_{i,j}) + (g_{i,j}-\tilde{g}_i).$ Since $g_{i,j}\prec g_{1,1},$ we get that $S_h g_{i,j}-g_{i,j}\sim g'_{i,j}\prec g'_{1,1},$ and also, by (b), that  $g_{i,j}-\tilde{g}_i\prec g'_{1,1},$ from which the result follows.

\medskip

\noindent {\bf{Claim 3.}} Property (iii) of a nice family holds for all $h\in\mathbb{N}.$

We will show that for any $h\in\mathbb{N}$ we have

(a) $S_h g_{i,1}-S_h g_{i,j}\prec S_h g_{1,1}-S_h g_{1,j},$ for $2\leq i\leq \ell,$ $2\leq j\leq m;$ and 

(b) $S_h g_{i,1}-g_{i,j}\prec S_h g_{1,1}-g_{1,j},$ for $2\leq i\leq \ell,$ $1\leq j\leq m.$

\medskip

(a): It follows immediately from the definition of the nice family.

(b): If $g_{1,1}\ncong g_{1,j},$ then $S_h g_{1,1}-g_{1,j}\sim g_{1,1}$ and the result follows from the fact that $g_{i,j}\prec g_{1,1}$ for all $2\leq i\leq \ell,$ $1\leq j\leq m.$ If $g_{1,1}\cong g_{1,j},$ then $S_h g_{1,1}-g_{1,j}\sim g'_{1,1}.$ Write $S_h g_{i,1}-g_{i,j}= (S_h g_{i,1}-g_{i,1}) + (g_{i,1}-g_{i,j})$ and note that $S_h g_{i,1}-g_{i,1}\sim g'_{i,1}\prec g'_{1,1}$ and $ g_{i,1}-g_{i,j}\prec g_{1,1}-g_{1,j}\ll g'_{1,1}$ to obtain the result.
\end{proof}

\section{The weakly mixing case}\label{wm}

\medskip

In this section we show Proposition~\ref{P:1} which implies the case where our commuting transformations are weakly mixing (Corollary~\ref{cor:wm}). 
In order to do so, we follow the approach of \cite{F3}. More specifically, using some intermediate results, we show the base case, which corresponds to sublinear iterates, and then finish the proof with (PET) induction, using Lemma~\ref{L:vdC red}.

\medskip

The following result informs us that an average with linear iterates can be bounded by a single Host-Kra seminorm.

\begin{lemma}[\cite{F3}]\label{L:2}
For $m\in \mathbb{N}$ let $(X,\mathcal{B},\mu,T)$ be a system, $f_1,\ldots,f_m\in L^\infty(\mu)$ functions bounded by $1,$ and $\alpha_1,\ldots,\alpha_m$ non-zero real numbers with $\alpha_i\neq \alpha_1$ for all $2\leq i\leq m.$ Then there exists $C\equiv C(m,\alpha_2,\ldots,\alpha_m)$ such that $$\limsup_{N-M\to\infty}\sup_{\norm{f_2}_\infty,\ldots,\norm{f_m}_\infty\leq 1}\norm{\frac{1}{N-M}\sum_{n=M}^{N-1} T^{[\alpha_1 n]}f_1\cdots T^{[\alpha_m n]}f_m}_2\leq C\nnorm{f_1}_{2m,T}.$$
\end{lemma}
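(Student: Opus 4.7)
The plan is to establish this inequality by iterated applications of the van der Corput lemma (Lemma~\ref{vdClemma}), following the familiar PET-type reduction for linear integer-part iterates. Set $u_n := T^{[\alpha_1 n]}f_1 \cdots T^{[\alpha_m n]}f_m$. A single application of Lemma~\ref{vdClemma} reduces the squared norm of the Ces\`{a}ro average of $u_n$ to an expression involving the inner products $\langle u_n, u_{n+h}\rangle$, which, after composing with $T^{-[\alpha_1 n]}$ (norm-preserving by $T$-invariance of $\mu$), becomes an integral of a product of $2m$ functions with iterates of the form $[\alpha_1(n+h)]-[\alpha_1 n]$ (applied to $f_1$, paired against $\bar{f}_1$) and $[\alpha_i n]-[\alpha_1 n]$, $[\alpha_i(n+h)]-[\alpha_1 n]$ for $2\leq i\leq m$.

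The key observation is that for each $i$ and each $h\in\N$, the sequence $[\alpha_i(n+h)]-[\alpha_i n]$ differs from the constant $[\alpha_i h]$ by an integer in $\{-1,0,1\}$ whose value is determined by a sign condition on the fractional parts $\{\alpha_i n\}$ and $\{\alpha_i h\}$. Partitioning the sum over $n$ according to the finitely many possible sign patterns as $i$ ranges over $\{1,\ldots,m\}$, one can pull out the $h$-dependent but $n$-independent factor $T^{[\alpha_1 h]}f_1\cdot \bar{f}_1$ and is left, on each piece, with an inner average of products of $\bar{f}_i, f_i$ ($2\leq i\leq m$) whose iterates are asymptotically of the form $[(\alpha_i-\alpha_1)n]$ up to bounded correction. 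The existence of the Ces\`{a}ro limit of each indicator for a sign pattern follows from Weyl equidistribution of $(\alpha_i n)$ modulo $1$ for $\alpha_i$ irrational, and is trivial in the rational case.

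Iterating the vdC reduction, using Cauchy--Schwarz at each stage to absorb the uniformly bounded $f_2,\ldots,f_m$ and the partitioning above to control the integer parts, one successively eliminates the auxiliary functions. After $2m-1$ such steps one is left with a $2^{2m}$-fold self-correlation of $f_1$ along iterates $\vec\epsilon\cdot\vec n$, $\vec\epsilon\in\{0,1\}^{2m}$, which by formula \eqref{E:norms} equals $\nnorm{f_1}_{2m,T}^{2^{2m}}$, up to a constant $C(m,\alpha_2,\ldots,\alpha_m)$ that absorbs the bookkeeping of sign patterns (at most $3^m$ per step) and the cumulative Cauchy--Schwarz factors.

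The main obstacle is the careful tracking of the integer-part corrections across the $2m-1$ vdC iterations: at each level one must verify that the sign-pattern partitions give convergent Ces\`{a}ro sub-averages, that the supremum over $\norm{f_i}_\infty\leq 1$ for $i\geq 2$ is preserved through the successive Cauchy--Schwarz steps, and that the residual iterate differences $[(\alpha_{i}-\alpha_1)n]$ remain essentially distinct in the sense needed to iterate (which is automatic, since $\alpha_i-\alpha_1\neq 0$). Once this bookkeeping is in place, the residual self-correlation of $f_1$ matches the defining expression for $\nnorm{f_1}_{2m,T}$ and yields the claimed bound.
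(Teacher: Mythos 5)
The paper does not actually prove this lemma: it is imported verbatim from \cite{F3}, where it is established by induction on $m$ via van der Corput. So your proposal must stand on its own, and as written it has a genuine structural gap at the very first reduction.

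After applying Lemma~\ref{vdClemma} you compose with $T^{-[\alpha_1 n]}$, so the pair coming from $f_1$ sits at the \emph{bounded} iterate $[\alpha_1(n+h)]-[\alpha_1 n]=[\alpha_1 h]+O(1)$, and you then ``pull out'' the $n$-independent factor $T^{[\alpha_1 h]}f_1\cdot\bar f_1$, leaving an inner average involving only $f_2,\dots,f_m$. At that moment all information about $f_1$ beyond $\Vert T^{[\alpha_1 h]}f_1\cdot\bar f_1\Vert_2\le 1$ has been discarded: the remaining average cannot be majorized by any seminorm of $f_1$ (take $f_2=\dots=f_m=1$), and the extracted factor satisfies $\frac{1}{H}\sum_{h=1}^{H}\Vert\bar f_1\cdot T^{[\alpha_1 h]}f_1\Vert_2=1$ for every unimodular $f_1$, including ones with $\nnorm{f_1}_{2m,T}=0$ (e.g.\ a mean-zero unimodular function on a Bernoulli system). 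Hence the first step yields only a trivial bound, and the later claim that ``after $2m-1$ such steps one is left with a $2^{2m}$-fold self-correlation of $f_1$'' is inconsistent with it --- once $f_1$ has been removed from the average, no further vdC step can reintroduce it (note also that a $2^{2m}$-fold correlation as in \eqref{E:norms} requires $2m$ averaging parameters, not $2m-1$). The correct reduction goes the other way: difference along the iterate of one of the \emph{other} functions, say $[\alpha_m n]$, so that $f_m$ is the function whose iterate becomes bounded and is absorbed by Cauchy--Schwarz, while the two copies of $f_1$ survive at the common growing iterate $[(\alpha_1-\alpha_m)n]+O(1)$ and merge into the single function $\bar f_1\cdot T^{c(h)}f_1$. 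The inductive hypothesis for $m-1$ functions (in its form with the supremum over the bounded auxiliary functions, which is exactly why that supremum appears in the statement) bounds the inner average by $\nnorm{\bar f_1\cdot T^{c(h)}f_1}_{k(m-1),T}$, and averaging over $h$ together with H\"older and the defining recursion of the seminorms lifts this to a higher seminorm of $f_1$. Your handling of the integer-part errors by sign patterns (equivalently, by the product-system device of Lemma~\ref{L:4}) and your observation that the new coefficients $\alpha_i-\alpha_m$ inherit the hypothesis are fine; but the choice of differencing direction is the heart of the lemma, and with the choice you made the argument does not produce the stated bound.
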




Differences of integer parts lead to (bounded) error terms, i.e., terms which take finitely many values. By passing to a product system, we can bound averages 
with such error terms by averages 
where the error terms are fixed. 

\begin{lemma}[\cite{F3}]\label{L:4}
For $\ell, m\in \mathbb{N}$ let $(X,\mathcal{B},\mu,T_1,\ldots,T_\ell)$ be a system, $f_1,\ldots,f_m\in L^\infty(\mu),$ and, for $1\leq i\leq m,$ $1\leq j\leq \ell,$ $(a_{i,j}(n))_n$ sequences with integer values and $(e_{i,j}(n))_n$ sequences taking values in some finite set of integers $F.$ Then for any $N\in\mathbb{N}$ we have $$\sup_{E\subseteq \mathbb{N}}\norm{\frac{1}{N}\sum_{n=1}^N \prod_{i=1}^m (T_1^{a_{i,1}(n)+e_{i,1}(n)}\cdots T_\ell^{a_{i,\ell}(n)+e_{i,\ell}(n)})f_i\cdot {\bf{1}}_E(n)}^2_{L^2(\mu)}$$ $$\leq |F|^{2\ell m}\cdot \max_{c_{i,j}\in F} \norm{\frac{1}{N}\sum_{n=1}^N \prod_{i=1}^m (\tilde{T}_1^{a_{i,1}(n)+c_{i,1}}\cdots \tilde{T}_\ell^{a_{i,\ell}(n)+c_{i,\ell}})\tilde{f}_i}_{L^2(\tilde{\mu})},$$ where $\tilde{T}=T\times T,$ $\tilde{\mu}=\mu\times\mu$ and $\tilde{f}=f\otimes \bar{f}.$ 
\end{lemma}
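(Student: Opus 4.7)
The plan is to proceed in three steps: (1) partition $\{1,\ldots,N\}$ according to the values of the error tuples, (2) apply Cauchy--Schwarz to replace a sum of $|F|^{\ell m}$ terms by its maximum (this is where the factor $|F|^{2\ell m}$ enters), and (3) invoke the standard product--system trick to absorb both the indicator ${\bf{1}}_E(n)$ and the ``square versus first power'' asymmetry between the two sides of the inequality.

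For step (1), for each tuple $\mathbf{c}=(c_{i,j})_{1\leq i\leq m,\,1\leq j\leq \ell}\in F^{\ell m}$ set $E_{\mathbf{c}}:=\{n\in\N:\ e_{i,j}(n)=c_{i,j}\ \text{for all}\ i,j\}$ and $g_n^{\mathbf{c}}:=\prod_{i=1}^m (T_1^{a_{i,1}(n)+c_{i,1}}\cdots T_\ell^{a_{i,\ell}(n)+c_{i,\ell}})f_i$. The sets $E_{\mathbf{c}}$ partition $\N$, so the average on the LHS equals the sum over the $|F|^{\ell m}$ tuples $\mathbf{c}$ of $\frac{1}{N}\sum_{n=1}^N g_n^{\mathbf{c}}\cdot {\bf{1}}_{E\cap E_{\mathbf{c}}}(n)$.

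For step (2), Cauchy--Schwarz in $\ell^2$ of dimension $|F|^{\ell m}$ bounds the squared $L^2(\mu)$-norm of this sum by $|F|^{\ell m}$ times the sum of squared norms, which in turn is at most $|F|^{2\ell m}$ times the maximum squared norm over $\mathbf{c}\in F^{\ell m}$. Since the resulting bound is already independent of $E$, taking $\sup_{E\subseteq\N}$ at the end costs nothing.

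For step (3), with $\mathbf{c}$ now fixed, expand the squared $L^2(\mu)$-norm as a double sum of inner products $\langle g_n^{\mathbf{c}},g_{n'}^{\mathbf{c}}\rangle$, pass to absolute values (harmless since the factor ${\bf{1}}_{E\cap E_{\mathbf{c}}}(n){\bf{1}}_{E\cap E_{\mathbf{c}}}(n')$ is bounded by $1$ and thus disappears), and apply Cauchy--Schwarz to the double sum over $(n,n')\in\{1,\ldots,N\}^2$ to obtain
\[ \norm{\frac{1}{N}\sum_{n=1}^N g_n^{\mathbf{c}}\cdot {\bf{1}}_{E\cap E_{\mathbf{c}}}(n)}_{L^2(\mu)}^4\leq \frac{1}{N^2}\sum_{n,n'=1}^N |\langle g_n^{\mathbf{c}},g_{n'}^{\mathbf{c}}\rangle|^2. \]
The right-hand side equals $\norm{\frac{1}{N}\sum_{n=1}^N \tilde g_n^{\mathbf{c}}}_{L^2(\tilde\mu)}^2$, because the identity $\tilde g_n^{\mathbf{c}}=g_n^{\mathbf{c}}\otimes\bar g_n^{\mathbf{c}}$ (immediate from $\tilde T_j=T_j\times T_j$ and $\tilde f_i=f_i\otimes\bar f_i$) gives $\int \tilde g_n^{\mathbf{c}}\,\overline{\tilde g_{n'}^{\mathbf{c}}}\,d\tilde\mu=|\langle g_n^{\mathbf{c}},g_{n'}^{\mathbf{c}}\rangle|^2$. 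Taking square roots of the resulting inequality $\|h_{\mathbf{c}}\|_{L^2(\mu)}^4\leq\|\tilde h_{\mathbf{c}}\|_{L^2(\tilde\mu)}^2$ and combining with step (2) yields the stated bound. The only mildly delicate point is this last ``square versus first power'' asymmetry, which is precisely what the product system construction is designed to provide; everything else is bookkeeping, and the indicator ${\bf{1}}_E$ disappears as soon as the complex inner products are replaced by their absolute values.
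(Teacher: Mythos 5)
Your proof is correct, and it is essentially the standard argument behind this lemma (which the paper itself does not prove but imports from \cite{F3}): partition $\{1,\ldots,N\}$ by the values of the error tuples, apply Cauchy--Schwarz twice to pick up the factor $|F|^{2\ell m}$ and pass to a maximum, and then use the identity $|\langle g_n,g_{n'}\rangle|^2=\langle \tilde g_n,\tilde g_{n'}\rangle_{L^2(\tilde\mu)}$ in the product system to absorb the indicator and account for the square-versus-first-power asymmetry. All the individual estimates check out, so nothing further is needed.
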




 

We are now ready to prove Proposition~\ref{P:1}, a crucial step towards the proof of Theorem~\ref{T:main_general}.

\begin{proof}[Proof of Proposition~\ref{P:1}]
The base case is when $g_{1,1}\in \CT_0.$ We will show that if $\nnorm{f_1}_{2m+1,T_1}=0,$ then we have the result.

If $g_1\in \CT_d,$ we have that $g_{1,1}\sim g_1^{(d)},$ so, for $1\leq j\leq \ell,$ we can write (by Lemma~\ref{L:-1}) $g_{1,j}=\lambda_{1,j}g_1^{(d)}+\tilde{e}_{1,j}$ where $\tilde{e}_{1,j}(x)\to 0$ as $x\to\infty$  and $\lambda_{1,1}\neq 0.$ Note that $g_{1,1}\nrightarrow 0$ because of property (i) of the nice family. So, for any $2\leq j\leq m,$ if $\alpha_{j}:=\lambda_{1,j}/\lambda_{1,1}\in \mathbb{R}$ and $c_j:=\tilde{e}_{1,j}-\alpha_j\tilde{e}_{1,1},$ then we have $g_{1,j}=\alpha_j g_{1,1}+c_j.$ For $2\leq i\leq \ell,$ $2\leq j\leq m,$ let 
$$\tilde{g}_{i,j}:=g_{i,j}\circ g^{-1}_{1,1}.$$ 
 Note that $\tilde{g}_{i,j}(x)\prec x$ 
  for all $2\leq i\leq \ell,$ $2\leq j\leq m.$ Also, for all $2\leq i\leq \ell,$ $2\leq j\leq m,$ we can write
   $[g_{1,j}(n)]=[\alpha_j[g_{1,1}(n)]]$
   $+e_{1,j}(n),$ and $[g_{i,j}(n)]=[\tilde{g}_{i,j}([g_{1,1}(n)])]+e_{i,j}(n),$ where $(e_{i,j}(n))_n$ are sequences of integers taking finitely many values. Note also that if some $g_{i,j}(x)\to 0,$ as $x\to\infty,$  then we have $[g_{i,j}(n)]=e_{i,j}(n),$ so, we can assume without loss of generality that every $g_{i,j}$ is in $\CT_0.$ 
   
   Using Lemma~\ref{L:4}, it suffices to show that $$\lim_{N\to\infty}\norm{\frac{1}{N}\sum_{n=1}^N (\tilde{T}_1^{[g_{1,1}(n)]}\prod_{i=2}^\ell \tilde{T}_i^{[\tilde{g}_{i,1}([g_{1,1}(n)])]})\tilde{f}_1\cdot\prod_{j=2}^m (\tilde{T}_1^{[\alpha_j[g_{1,1}(n)]]}\prod_{i=2}^\ell \tilde{T}_i^{[\tilde{g}_{i,j}([g_{1,1}(n)])]})\tilde{f}_j}_{L^2(\tilde{\mu})}=0,$$
    where $\tilde{T}_i=T_i\times T_i,$ $\tilde{\mu}=\mu\times\mu$ and $\tilde{f}_j=\prod_{i=1}^\ell\tilde{T}_i^{\rho_{i,j}}(f_j\otimes \bar{f}_j)$ ($\rho_{i,j}$ are the constants that we get from Lemma~\ref{L:4} to obtain the maximum value), $1\leq i\leq \ell,$ $1\leq j\leq m.$
   
According to Lemma~\ref{L:sub-linear} (for $\ell=1,$ using $\mathcal{T}_0\subseteq \mathcal{F}$) it suffices to show that $$\lim_{N\to\infty}\norm{\frac{1}{|I_N|}\sum_{n\in I_N}(\tilde{T}_1^{n}\prod_{i=2}^\ell \tilde{T}_i^{[\tilde{g}_{i,1}(n)]})\tilde{f}_1\cdot\prod_{j=2}^m (\tilde{T}_1^{[\alpha_j n]}\prod_{i=2}^\ell \tilde{T}_i^{[\tilde{g}_{i,j}(n)]})\tilde{f}_j}_{L^2(\tilde{\mu})}=0,$$ where $(I_n)_n$ is a sequence of intervals with lengths increasing to infinity. 

$g_{i,j}$'s, as functions in $\CT_0,$ are eventually monotone, hence $\tilde{g}_{i,j}$ are eventually monotone, and since $\tilde{g}_{i,j}\prec x,$ we have that $\tilde{g}_{i,j}(x+1)-\tilde{g}_{i,j}(x)$ converge to $0$ and have eventually constant sign. So, we can assume without loss of generality that the sequences $([\tilde{g}_{i,j}(n)])_n$ are constant in each interval $I_N,$ since we can decompose $I_N$ (except a finite set of fixed cardinality) into sub-intervals of lengths tending to infinity in a way that $([\tilde{g}_{i,j}(n)])_n$ are fixed in each sub-interval. So, using Lemma~\ref{L:2} (for every $N$ pick the $j$-th function to be $\prod_{i=2}^\ell \tilde{T}_i^{[\tilde{g}_{i,j}(n)]}\tilde{f}_j$) and the fact that $\nnorm{f_1}_{2m+1,T_1}=0,$ we have, by the properties of the seminorms, that $\nnorm{\tilde{f}_1}_{2m,\tilde{T}_1}=0,$ completing the base case.
\medskip

For the inductive step, let $(\mathcal{A}_1,\ldots,\mathcal{A}_\ell)$ be a nice family of $m$ $\ell$-tuples of functions with weight $W,$ where $g_{1,1}\in \mathcal{T}_i$ for some $i\geq 1,$ and assume that the statement is true for nice families of $2m$ $\ell$-tuples of functions with weight $W'<W$ with $k(W',2m)$ being the integer for which the conclusion holds. We will show that $k=\max_{W'<W}k(W',2m)+1$ is the required $k$ and we will complete the proof.    

Assuming that $\nnorm{f_1}_{k,T_1}=0$, via Lemma~\ref{vdClemma}, it suffices to show that, for sufficiently large $h,$ we have that the averages of
\[\int \prod_{j=1}^m \left(\prod_{i=1}^\ell T_i^{[g_{i,j}(n+h)]}\right)f_j\cdot \prod_{j=1}^m \left(\prod_{i=1}^\ell T_i^{[g_{i,j}(n)]}\right)\bar{f}_j\;d\mu\]
converge to $0$ as $N\to\infty.$ If $(\tilde{g}_1,\ldots,\tilde{g}_\ell)$ denotes the $\ell$-tuple guaranteed by Lemma~\ref{L:vdC red}, it suffices to show that the averages of 
\[\prod_{j=1}^m \left(\prod_{i=1}^\ell T_i^{[g_{i,j}(n+h)-\tilde{g}_i(n)]+e_{i,j}(n)}\right)f_j\cdot \prod_{j=1}^m \left(\prod_{i=1}^\ell T_i^{[g_{i,j}(n)-\tilde{g}_i(n)]+e_{i+\ell,j}(n)}\right)\bar{f}_j\] converge to $0$ in $L^2(\mu),$ where $e_{i,j}(n),$ $1\leq i\leq 2\ell,$ $1\leq j\leq m$ take values in $\{0,1\}.$
Using Lemma~\ref{L:4}, we have to show that the average
\[\prod_{j=1}^m \left(\prod_{i=1}^\ell \tilde{T}_i^{[g_{i,j}(n+h)-\tilde{g}_i(n)]}\right)\tilde{f}_j\cdot \prod_{j=1}^m \left(\prod_{i=1}^\ell \tilde{T}_i^{[g_{i,j}(n)-\tilde{g}_i(n)]}\right)\tilde{f}_{j+m}\] converges to $0$ in $L^2(\tilde{\mu})$ as $N\to\infty,$ where $\tilde{T}_i=T_i\times T_i,$ $\tilde{\mu}=\mu\times \mu,$ $\tilde{f}_j=\prod_{i=1}^\ell \tilde{T}^{\rho_{i,j}}_i(f_j\otimes \bar{f}_j)$ and $\tilde{f}_{j+m}=\prod_{i=1}^\ell \tilde{T}^{\rho_{i+\ell,j}}_i (\bar{f}_j\otimes f_j)$ (as in the base case, $\rho_{i,j},$ $1\leq i\leq 2\ell,$ $1\leq j\leq m$ are the constants that we get from Lemma~\ref{L:4}). Note that in this procedure we remove any term whose iterate is bounded as they do not contribute on the convergence to $0$.

By Lemma~\ref{L:vdC red}, we have that the family $(\tilde{g}_1,\ldots,\tilde{g}_\ell,h)-\text{vdC}(\mathcal{A}_1,\ldots,\mathcal{A}_\ell)$ is nice with weight $W'<W$. Its first iterate, under the transformations $\tilde{T}_i$'s, is the $\ell$-tuple $([g_{1,1}(n+h)-\tilde{g}_1(n)],\ldots,[g_{\ell,1}(n+h)-\tilde{g}_\ell(n)])$ and it is applied to the function $\tilde{f}_1.$ The claim now follows by induction, since $\nnorm{f_1}_{k,T_1}=0$ implies $\nnorm{\tilde{f}_1}_{k(W',2m),\tilde{T}_1}=\nnorm{f_1\otimes \bar{f}_1}_{k(W',2m),\tilde{T}_1}=0.$
\end{proof}

Immediate implication of the previous result for the nice family $\mathcal{A}_1=(g_1,0,\ldots,0),$ $\ldots,$ $\mathcal{A}_\ell=(0,\ldots,0,g_\ell)$ is the following:

\begin{proposition}\label{P:wm_standard}
For $\ell \in \mathbb{N}$ let $(X,\mathcal{B},\mu,T_1,\ldots,T_\ell)$ be a system, $f_1,\ldots,f_\ell\in L^\infty(\mu),$ and $g_\ell\prec\ldots\prec g_1\in \CT$ with $g_1\in \CT_d,$ $d\in \omega.$ There exists $k\equiv k(d,\ell)\in\mathbb{N}$ such that if $\nnorm{f_1}_{k,T_1}=0,$ then  \[\lim_{N\to\infty}\norm{\frac{1}{N}\sum_{n=1}^N T_1^{[g_1(n)]}f_1\cdots T_\ell^{[g_\ell(n)]}f_\ell}_2=0.\] 
\end{proposition}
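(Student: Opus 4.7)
The statement is an immediate corollary of Proposition~\ref{P:1}. The plan is to repackage the list of iterates $([g_1(n)],\ldots,[g_\ell(n)])$ acting diagonally on $(f_1,\ldots,f_\ell)$ as a family of $\ell$-tuples so that each $f_j$ is formally acted on by every $T_i$ (with a trivial iterate when $i\neq j$). Explicitly, I would set $m=\ell$ and let $g_{i,j}:=g_i$ if $i=j$ and $g_{i,j}:=0$ otherwise, so that $\mathcal{A}_i=(g_{i,1},\ldots,g_{i,\ell})$ has $g_i$ in its $i$-th slot and zeros elsewhere. With this choice the product $\prod_{j=1}^{m}T_1^{[g_{1,j}(n)]}\cdots T_\ell^{[g_{\ell,j}(n)]}f_j$ collapses exactly to $T_1^{[g_1(n)]}f_1\cdots T_\ell^{[g_\ell(n)]}f_\ell$.

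The only thing to verify is that $(\mathcal{A}_1,\ldots,\mathcal{A}_\ell)$ is a nice ordered family in the sense of Section~\ref{S:nice}. Each $g_{i,j}\in\mathcal{C}(g_i)$ (the function $0$ arising as the trivial integer combination). Condition (i) requires $1\prec g_{1,1}-g_{1,j}$ and $g_{1,j}\ll g_{1,1}$ for $2\leq j\leq m$; since $g_{1,1}=g_1\in\CT_d$ satisfies $|g_1(x)|\to\infty$ (by fact I (ii)) and $g_{1,j}=0$ for $j\geq 2$, both demands are immediate. Condition (ii) demands $g_{i,j}\prec g_1$ for $2\leq i\leq\ell$ and all $j$; the only nonzero option is $g_{i,j}=g_i$ with $i\geq 2$, and $g_i\prec g_1$ holds by hypothesis. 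Condition (iii) demands $g_{i,1}-g_{i,j}\prec g_{1,1}-g_{1,j}=g_1$ for $i,j\geq 2$; but $g_{i,1}=0$ and $g_{i,j}\in\{0,g_i\}$, and once again $g_i\prec g_1$ closes the case.

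Having checked niceness, I would apply Proposition~\ref{P:1} to the family $(\mathcal{A}_1,\ldots,\mathcal{A}_\ell)$ with $m=\ell$, specializing to $E=\mathbb{N}$ in~\eqref{Prop: nice} so that $\mathbf{1}_E(n)\equiv 1$. This produces an integer $k\equiv k(d,\ell,\ell)$, which I would rename $k(d,\ell)$, such that $\nnorm{f_1}_{k,T_1}=0$ forces the $L^2$-norm of the average $N^{-1}\sum_{n=1}^{N}T_1^{[g_1(n)]}f_1\cdots T_\ell^{[g_\ell(n)]}f_\ell$ to tend to $0$. There is no substantive obstacle here: the entire content of the proposition is the bookkeeping that embeds the original product into the nice-family framework already handled by Proposition~\ref{P:1}.
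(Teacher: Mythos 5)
Your proposal is correct and is exactly the paper's argument: the paper deduces this proposition from Proposition~\ref{P:1} applied to the nice family $\mathcal{A}_1=(g_1,0,\ldots,0),\ldots,\mathcal{A}_\ell=(0,\ldots,0,g_\ell)$, i.e.\ the same diagonal choice $g_{i,j}=g_i\delta_{ij}$ with $m=\ell$ and $E=\mathbb{N}$. Your verification of the three niceness conditions is a correct spelling-out of what the paper leaves implicit.
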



Another implication is Corollary~\ref{cor:wm}, i.e., for weakly mixing transformations and for iterates of the form $a_i=[g_i],$ where $g_i\in\mathcal{T}$ are of different growth, the limit of \eqref{E:Multiple} is the expected one. This follows by a standard classical argument (which we omit), using the fact that every $T_i$ as weakly mixing is also ergodic, hence, we have $\mathbb{E}(f_i|\mathcal{I}(T_i))=\int f_i\;d\mu.$

\begin{remark*}
As it was highlighted in Section~\ref{results}, Corolarry~\ref{cor:wm} covers some additional to \cite{BK} cases. Consider for example the pair of functions from $\mathcal{T}_0$ of different growths:
\[\left\{g_1(x)=\sqrt{x}\log x, g_2(x)=\sqrt{x}(2+\cos\sqrt{\log x})\right\}.\] While \cite[Theorem~B]{BK} cannot be applied,\footnote{ The functions $g_1, g_2$ don't satisfy the $R$-property (see \cite{BK}).} for $\ell=2$ and $T_1=T_2$ w.m., Corollary~\ref{cor:wm} implies the convergence of the corresponding average \eqref{E:Multiple} to the expected limit.
\end{remark*}

\section{Towards the convergence to the expected limit, \\ equidistribution results on nilmanifolds for several nil-orbits}\label{S:equi}

In the proof of Theorem~\ref{T:main_general}, we will consider separately the case where all the $g_i$'s are sublinear (case that follows from things that have already been discussed), and the cases where all (Case 2) or some (Case 3, which actually follows by similar arguments to Case~2) of them are superlinear (i.e., have positive degree).

The first step towards this direction is to replace the condition of Proposition~\ref{P:wm_standard}, $\nnorm{f_1}_{k,T_1}=0,$ with $\nnorm{f_i}_{k,T_i}=0$ for some $1\leq i\leq \ell.$ Frantzikinakis, in \cite{F3}, overcame this technicality by using dual sequences (see \cite[Subsection~3.3]{F3}) and a weak decomposition result (\cite[Proposition~3.4]{F3}), obtaining correlation estimates (see \cite[Sections~6 and 7]{F3}) for the multiple ergodic averages of interest. As 
one can follow the arguments from \cite{F3} (replacing the results about Hardy field functions with the corresponding ones for functions in $\mathcal{T}$ proved in previous sections), we skip the proofs of the next two results.  

\begin{proposition}$($\emph{Analogous to} \cite[Proposition~7.1]{F3}$)$\label{p: general seminorm}
For $\ell\in \mathbb{N}$ let $(X,\mathcal{B},\mu,T_1,\ldots,T_\ell)$ be a system, $f_1,\ldots,f_\ell\in L^\infty(\mu)$ and $g_\ell\prec\ldots\prec g_1\in \bigcup_{i= 1}^d\mathcal{T}_i,$ $d\in \mathbb{N}.$ There exists $k=k(d,\ell)\in \mathbb{N}$ such that if $\nnorm{f_i}_{k,T_i}=0$ for some $1\leq i\leq \ell,$ then \[\lim_{N\to\infty}\norm{\frac{1}{N}\sum_{n=1}^N T_1^{[g_1(n)]}f_1\cdots T_\ell^{[g_\ell(n)]}f_\ell}_2=0.\]
\end{proposition}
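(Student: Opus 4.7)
The plan is to adapt the argument of \cite[Proposition~7.1]{F3} to the tempered-function setting, substituting the Hardy-field inputs used there by their counterparts developed in the previous sections of this paper. Observe first that the case $i=1$ is already covered by Proposition~\ref{P:wm_standard}, so the content of the present result lies in allowing the seminorm-vanishing hypothesis on an arbitrary coordinate $f_i$.

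The mechanism for transferring control from $f_1$ to $f_i$ is a duality argument based on dual sequences, following \cite[Subsection~3.3]{F3}. For test functions $h\in L^\infty(\mu)$, measure-preservation of $T_i$ lets one rewrite the inner product as
\[\Big\langle \frac{1}{N}\sum_{n=1}^N \prod_{j=1}^\ell T_j^{[g_j(n)]}f_j,\; h\Big\rangle=\int f_i\cdot D_N^{(h)}\,d\mu,\]
where
\[D_N^{(h)}=\frac{1}{N}\sum_{n=1}^N T_i^{-[g_i(n)]}h\cdot\prod_{j\neq i}\bigl(T_j^{[g_j(n)]}T_i^{-[g_i(n)]}\bigr)f_j.\]
The crux is to show that every $L^2$ weak limit point of $(D_N^{(h)})_N$ lies in the characteristic factor of $(X,T_i)$ associated with the seminorm $\nnorm{\cdot}_{k,T_i}$, for a $k=k(d,\ell)$ depending only on the degree $d$ of $g_1$ and on $\ell$. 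Combined with a weak decomposition of $f_i$ into a structured plus $\nnorm{\cdot}_{k,T_i}$-small part (analogous to \cite[Proposition~3.4]{F3}), this yields $\nnorm{f_i}_{k,T_i}=0\Rightarrow\int f_i\cdot D_N^{(h)}\,d\mu\to 0$ for all $h$, hence the desired $L^2$-convergence to zero.

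To obtain the characteristic-factor bound, I would apply Lemma~\ref{vdClemma} to $D_N^{(h)}$ and then iterate the vdC-operation of Section~\ref{S:nice}, using Lemma~\ref{L:vdC red} to strictly reduce the weight of the nice family of iterates at each step. The new leading iterate after dualization is $g_1-g_i$ (or $-g_i$ when $i=1$), which still dominates all other members of the family, so the complexity parameters are controlled by $d$. Niceness of the shifted family would be verified using parts III and IV of Section~3, together with Lemmas~\ref{L:1}, \ref{L:-1}, and \ref{L:L2}. After boundedly many reductions (in terms of $d$ and $\ell$) one reaches a sublinear configuration to which Lemma~\ref{L:sub-linear} and the base case of Proposition~\ref{P:wm_standard} apply, yielding the required bound on the dual functions by $\nnorm{f_i}_{k,T_i}$.

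The main obstacle, as stressed throughout Section~3, is that differences of tempered functions are not automatically tempered, so one must check at every vdC step that the family of shifted iterates $\{g_j(\cdot+h)-g_i(\cdot)\}$ and $\{g_j-g_i\}$ remains inside $\CT$ and assembles into a nice family in the sense of Section~\ref{S:nice}. The linear-combination criteria IV~(i)--(iii) and the equivalence analysis around Lemma~\ref{L:L2} are precisely the tempered analogues of the Hardy-field tools used in \cite{F3} for this purpose. Once this bookkeeping is executed, the induction on weight closes exactly as in the proof of Proposition~\ref{P:1}, and the required $k=k(d,\ell)$ is the one produced by the PET induction.
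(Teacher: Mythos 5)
Your proposal follows exactly the route the paper itself indicates: the paper does not write out a proof of this proposition but explicitly defers to the dual-sequence and weak-decomposition machinery of \cite[Subsection~3.3, Proposition~3.4, Sections~6--7]{F3}, with the Hardy-field inputs replaced by the tempered-function lemmas of Sections~3, 5 and 6 — which is precisely the substitution you describe. Your sketch is therefore a faithful (and somewhat more detailed) reconstruction of the intended argument, so it takes essentially the same approach as the paper.
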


Actually the following variant of the previous result also holds (via the use of the intermediate results of the corresponding expressions composed with products of dual sequences):

\begin{proposition}$($\emph{Analogous to} \cite[Proposition~7.3]{F3}$)$\label{p:general with R}
For $\ell\in \mathbb{N}$ let $(X,\mathcal{B},\mu,T_1,\ldots,T_\ell)$ be a system, $f_1,\ldots,f_\ell\in L^\infty(\mu)$ and $g_\ell\prec\ldots\prec g_1\in \bigcup_{i= 1}^d\mathcal{T}_i,$ $d\in \mathbb{N}.$ There exists $k=k(d,\ell)\in \mathbb{N}$ such that if $\nnorm{f_i}_{k,T_i}=0$ for some $1\leq i\leq \ell,$ then for every $R\in \mathbb{N}$ we have 
\[\limsup_{N\to\infty}\frac{1}{N}\sum_{n=1}^N\norm{\frac{1}{R}\sum_{r=1}^R T_1^{[g_1(Rn+r)]}f_1\cdots T_\ell^{[g_\ell(Rn+r)]}f_\ell}^2_2\leq \frac{1}{R}.\]
\end{proposition}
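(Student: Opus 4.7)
The plan is to follow closely the proof of [F3, Proposition~7.3], using the tempered-function machinery of Sections~\ref{S:nice}--\ref{wm} in place of its Hardy-field analogues. Normalize so that $\norm{f_i}_\infty\le 1$ for all $i$, and set $X_{r,n}:=\prod_{i=1}^\ell T_i^{[g_i(Rn+r)]}f_i$. Expanding the inner $L^2$-norm, one has
\[
\frac{1}{N}\sum_{n=1}^N \norm{\frac{1}{R}\sum_{r=1}^R X_{r,n}}_2^2
=\frac{1}{R^2}\sum_{r=1}^R\frac{1}{N}\sum_{n=1}^N\norm{X_{r,n}}_2^2
+\frac{1}{R^2}\sum_{r\neq s}\frac{1}{N}\sum_{n=1}^N\langle X_{r,n},X_{s,n}\rangle.
\]
The diagonal part is bounded by $1/R$ since $\norm{X_{r,n}}_2\le 1$, so everything reduces to showing that each off-diagonal Ces\`aro correlation vanishes: for every fixed pair $r\neq s$ in $\{1,\ldots,R\}$,
\[
\lim_{N\to\infty}\frac{1}{N}\sum_{n=1}^N\langle X_{r,n},X_{s,n}\rangle=0.
\]

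Since $\bigl|\int F\,d\mu\bigr|\le\norm{F}_2$ on the probability space, exchanging sum and integral yields that it suffices to prove
\[
\lim_{N\to\infty}\norm{\frac{1}{N}\sum_{n=1}^N\prod_{i=1}^\ell\bigl(T_i^{[g_i(Rn+r)]}f_i\bigr)\bigl(T_i^{[g_i(Rn+s)]}\bar f_i\bigr)}_2=0.
\]
This is a $2\ell$-term multiple ergodic average under the $\ell$ commuting transformations $T_1,\ldots,T_\ell$, each applied twice. Organize the iterates as a family of $\ell$-tuples with $m=2\ell$: take $\CA_i$ to have zero entries except in positions $i$ and $i+\ell$, where it records $g_i(R\cdot+r)$ and $g_i(R\cdot+s)$ respectively (so that the $j$-th function acted upon is $f_j$ for $1\le j\le\ell$ and $\bar f_{j-\ell}$ for $\ell+1\le j\le 2\ell$). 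By Lemma~\ref{L:BK} combined with III~(iv) each shift $g_i(R\cdot+r)$ lies again in $\mathcal{T}_{\deg g_i}$, while by Lemma~\ref{L:1} the difference $g_i(R\cdot+r)-g_i(R\cdot+s)\sim R(r-s)g_i'(R\cdot)$ has strictly smaller degree; using these together with Lemma~\ref{L:L2} one checks the three conditions in the definition of a nice family in Section~\ref{S:nice}.

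Having arranged a nice family, the PET-scheme of Proposition~\ref{P:1} applies verbatim: successive vdC-operations via Lemma~\ref{L:vdC red} strictly reduce the weight until a sublinear base case is reached, where Lemma~\ref{L:sub-linear} controls the remainder by a Host--Kra seminorm. Finally, to upgrade from the first-coordinate hypothesis $\nnorm{f_1}_{k,T_1}=0$ to the required hypothesis $\nnorm{f_i}_{k,T_i}=0$ for some $i$, one invokes the dual-sequence / weak-decomposition scheme used to pass from Proposition~\ref{P:1} to Proposition~\ref{p: general seminorm} (noting $\nnorm{\bar f_i}_{k,T_i}=\nnorm{f_i}_{k,T_i}$, an immediate consequence of the definition \eqref{E:norms}), which remains valid verbatim for the enlarged nice family. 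Choosing $k=k(d,\ell)$ large enough for both applications yields the desired vanishing of the displayed $L^2$-limit; summing the finitely many off-diagonal estimates produces the bound $\limsup_N \frac{1}{N}\sum_n\norm{\frac{1}{R}\sum_r X_{r,n}}_2^2\le 1/R$.

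The main obstacle is the bookkeeping involved in treating $2\ell$ functions under only $\ell$ transformations: one must confirm that the augmented ordered family $(\CA_1,\ldots,\CA_\ell)$ above is genuinely nice (the only non-automatic condition being (i) and (iii) at the ``paired'' indices $j$ and $j+\ell$, which rely on $g_i(R\cdot+r)-g_i(R\cdot+s)$ dominating $g_j(R\cdot+r)-g_j(R\cdot+s)$ for $j>i$, itself a direct consequence of $g_j\prec g_i$ via Lemma~\ref{L:1}) and that Lemma~\ref{L:vdC red} continues to deliver a strict weight-decrease in this doubled setting. Once these verifications are in place, the argument is otherwise a transcription of [F3, \S7] in which the Hardy-field equidistribution inputs are replaced by their tempered analogues from Sections~\ref{S:nice} and~\ref{wm}.
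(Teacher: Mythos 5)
Your argument is correct and follows the same route the paper intends (the paper omits this proof, deferring to \cite[Sections~6 and 7]{F3}): expand the square, bound the diagonal by $1/R$, and kill each off-diagonal correlation by viewing the resulting $2\ell$-term product as a nice family of $\ell$-tuples and invoking the PET and dual-sequence machinery behind Propositions~\ref{P:1} and~\ref{p: general seminorm}. The only point to tidy is that $g_i(Rx+r)$ is not an integer shift of $g_i$ but the shift of the dilate $G_i(x)=g_i(Rx)$ by the rational $r/R$, so the family fits the definition of $\CC(\cdot)$ only after this rescaling and after noting that Lemmas~\ref{L:1}--\ref{L:vdC red} hold verbatim for real shifts (as they are in fact stated for $h\in\R$).
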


Next, we follow \cite{F4} to show some equidistribution results on nilmanifolds for several nil-orbits along sequences of tempered functions. Up to this point no additional assumption is needed on the $g_i$'s than the one of different growth rates (and of course that all belong to $\CT$). To obtain the equidistribution results that follow though, we have to make sure that any non-trivial linear combination of $g_i$'s is still a tempered function.







\begin{theorem}\label{T:main_equi}
For $\ell\in \mathbb{N}$ let $g_\ell\prec\ldots\prec g_1\in \bigcup_{i\geq 1}\mathcal{T}_i$ with the property that $\sum_{i=1}^{\ell}\lambda_i g_i\in \CT$ for all $(\lambda_1,\ldots,\lambda_\ell)\in \mathbb{R}^\ell\setminus\{\vec{0}\},$ $X_i=G_i/\Gamma_i$ nilmanifolds, $b_i\in G_i$ and $x_i\in X_i,$ $1\leq i\leq \ell.$  Then the sequence \[(b_1^{[g_1(n)]}x_1,\ldots,b_\ell^{[g_\ell(n)]}x_\ell)_n\] is equidistributed in the nilmanifold $\overline{(b_1^nx_1)}_n\times\cdots\times\overline{(b_\ell^nx_\ell)}_n.$
\end{theorem}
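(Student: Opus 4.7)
The plan is to adapt the arguments of Frantzikinakis for the Hardy field setting in \cite{F4} to tempered functions, and to prove the equidistribution in three main steps.

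First, by replacing each $X_i$ with the orbit closure $\overline{(b_i^n x_i)}_n$, I may assume that $b_i$ acts ergodically on $X_i$. By passing to a connected component and lifting to a simply connected cover, I may further assume each $X_i$ is connected and simply connected, so that $b_i^s$ is defined for all $s\in\R$. The target equidistribution then becomes equidistribution of $(b_1^{[g_1(n)]}x_1,\ldots,b_\ell^{[g_\ell(n)]}x_\ell)_n$ in the product nilmanifold $X := X_1\times\cdots\times X_\ell$.

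Second, via an equidistribution criterion analogous to the one in \cite{F4}, extending Leibman's theorem to sequences of the form $b^{[g(n)]}$ with $g\in\CT$, equidistribution in $X$ reduces to equidistribution of the projection to the horizontal torus $\T^{s_1+\cdots+s_\ell}$, where $s_i := \dim G_i/[G_i,G_i]\Gamma_i$. This projected sequence is $([g_1(n)]\alpha_1+\beta_1,\ldots,[g_\ell(n)]\alpha_\ell+\beta_\ell)_n$, with $\alpha_i\in\T^{s_i}$ the image of $b_i$ and $\beta_i$ that of $x_i$. By Weyl's criterion this amounts to showing, for every non-zero $(h_1,\ldots,h_\ell)\in\Z^{s_1}\times\cdots\times\Z^{s_\ell}$, that
\[
\frac{1}{N}\sum_{n=1}^N e^{2\pi i\sum_{i=1}^\ell [g_i(n)](h_i\cdot\alpha_i)}\longrightarrow 0\quad\text{as }\; N\to\infty.
\]
Ergodicity of $b_i$ on the connected $X_i$ ensures that $h_i\cdot\alpha_i\in\R\setminus\Q$ whenever $h_i\neq 0$; setting $\gamma_i := h_i\cdot\alpha_i$ and discarding the indices with $h_i=0$, the required decay is precisely Case~1 of Proposition~\ref{P:vn}, whose proof uses the linear-independence hypothesis: any non-trivial combination $\sum_i\gamma_i g_i$ lies in $\CT$, so $(\sum_i\gamma_i g_i(n))_n$ is equidistributed in $\T$ by Fact~I~(iv), and the fractional-part argument then upgrades this to the required equidistribution with integer parts.

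The hard part is the second step: extending Leibman's theorem to allow indexing by $[g_i(n)]$ with $g_i\in\CT$. The classical theorem handles only sequences polynomial in $n$, while the Hardy field version of \cite{F4} exploits asymptotic expansions specific to logarithmico-exponential functions. Transferring these arguments to the tempered setting will rely on the van der Corput reductions developed in Section~\ref{S:nice}, the growth and derivative relations in III~(ii)--(v), and the stability of $\CT$ under linear combinations in IV~(i)--(iii). The hypothesis that every non-trivial linear combination of the $g_i$'s remains in $\CT$ is precisely what keeps the PET-style induction inside the class $\CT$ and prevents the oscillatory behaviour peculiar to tempered (as opposed to Hardy field) functions from obstructing the equidistribution estimates.
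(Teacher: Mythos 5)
Your first step (passing to orbit closures, ergodic $b_i$, connected and simply connected covers) and your identification of where the linear-combination hypothesis enters are consistent with the paper. The problem is your second step, which is where the entire content of the theorem lives and which you do not actually carry out. A reduction of equidistribution in a nilmanifold to equidistribution of the projection on the horizontal torus is Leibman's criterion for \emph{polynomial} sequences in $G$; no such criterion is available for sequences of the form $(b^{[g(n)]}x)_n$ with $g\in\CT$, and asserting ``an equidistribution criterion analogous to the one in \cite{F4}, extending Leibman's theorem'' is essentially assuming the theorem. Your suggested toolkit for filling this in --- the van der Corput operation and PET induction of Section~\ref{S:nice} --- is the machinery for the seminorm estimates of Proposition~\ref{P:1}, not for nilmanifold equidistribution, and it does not produce the missing criterion.

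The paper's actual route is different. Theorem~\ref{T:main_equi} is first reduced, via \cite[Lemma~5.1]{F4}, to the continuous-exponent statement (Theorem~\ref{T:main_equi2}), and then, via \cite[Lemma~5.2]{F4}, to Proposition~\ref{P:main_qaui}. That proposition is proved by partitioning $\N$ into blocks $\{Rn+1,\dots,Rn+R\}$ and replacing each $g_i(Rn+r)$ by its Taylor polynomial $p_{i,R,n}(r)=\sum_{j=0}^{k_i}\frac{r^j}{j!}g_i^{(j)}(Rn)$ up to an error tending to $0$; the inner average over $r$ is then an average along a genuine polynomial nil-orbit, to which the quantitative equidistribution results \cite[Proposition~4.2]{F4} and \cite[Lemma~2.1]{F4} apply. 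The work specific to $\CT$ is verifying the derivative conditions $|g_i^{(k+1)}|$ decreasing, $1/x^{k}\prec g_i^{(k)}(x)\prec 1$, $(g_i^{(k+1)})^{k}\prec(g_i^{(k)})^{k+1}$, and $g_i(x)/(x\log^2x)\prec g_i'(x)\ll g_i(x)/x$, and — this is where the joint hypothesis is used, via IV~(iii) rather than via Proposition~\ref{P:vn} — that non-trivial linear combinations of the $k$-th derivatives $g_i^{(k)}$ of functions in the same class $\mathcal{T}_k$ land in $\mathcal{T}_0$, hence are Fej\'er and give sequences equidistributed mod $1$. So the abelian input enters at the level of the Taylor coefficients of the block polynomials, not as a Weyl sum for the original $g_i$'s. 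To make your proof work you would need to either supply the block/Taylor argument or prove the Leibman-type criterion you invoke; as written there is a genuine gap.
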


This result is the analogous to \cite[Theorem~1.3~(ii)]{F4} for Hardy field functions. In order to prove it, it suffices, via \cite[Lemma~5.1]{F4}, to show the following (analogous to \cite[Theorem~1.3~(i)]{F4}) result:

\begin{theorem}\label{T:main_equi2}
For $\ell\in \mathbb{N}$ let $g_\ell\prec\ldots\prec g_1\in \bigcup_{i\geq 1}\mathcal{T}_i$  with the property that $\sum_{i=1}^{\ell}\lambda_i g_i\in \CT$ for all $(\lambda_1,\ldots,\lambda_\ell)\in \mathbb{R}^\ell\setminus\{\vec{0}\},$ $X_i=G_i/\Gamma_i$ nilmanifolds, with $G_i$ connected and simply connected, $b_i\in G_i$ and $x_i\in X_i,$ $1\leq i\leq \ell.$ Then the sequence \[(b_1^{g_1(n)}x_1,\ldots,b_\ell^{g_\ell(n)}x_\ell)_n\] is equidistributed in the nilmanifold $\overline{(b_1^sx_1)}_{s\in \mathbb{R}}\times\cdots\times\overline{(b_\ell^sx_\ell)}_{s\in \mathbb{R}}.$
\end{theorem}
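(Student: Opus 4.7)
The plan is to follow the strategy of the corresponding Hardy field result \cite[Theorem~1.3(i)]{F4}, reducing equidistribution on the product nilmanifold first to equidistribution on its maximal factor torus, and then to a Weyl's criterion check that pulls in the hypothesis on linear combinations. The essential feature of the class $\CT$ making this work is that any non-trivial $\R$-linear combination $\sum_i \beta_i g_i$ remains in $\CT$, so by Fact~I(iv) the sequence $(\sum_i \beta_i g_i(n))_n$ is equidistributed mod $1$.

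First I would reduce to the ergodic case. Each orbit closure $Y_i := \overline{(b_i^s x_i)}_{s\in \R}$ is a sub-nilmanifold of $X_i$ of the form $H_i x_i$ for some closed connected subgroup $H_i \leq G_i$ (and since $G_i$ is simply connected, $H_i$ is automatically simply connected) on which the one-parameter flow $(b_i^s)_{s\in \R}$ acts ergodically. Replacing $X_i$ by $Y_i$ and $G_i$ by $H_i$, I may assume that each individual flow $(b_i^s x_i)_{s\in \R}$ is equidistributed in $X_i$. Passing to the product nilmanifold $X := X_1 \times \cdots \times X_\ell$ with group $G := G_1 \times \cdots \times G_\ell$, the goal becomes equidistribution of $(b_1^{g_1(n)}x_1, \ldots, b_\ell^{g_\ell(n)}x_\ell)_n$ in $X$. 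By a Leibman-type criterion for sequences on connected nilmanifolds (see \cite[Section~5]{F4} and the underlying results of Leibman), this reduces to equidistribution of the projection onto the maximal factor torus of $X$, which is the product of the horizontal tori $\T^{s_i}$ of the $X_i$.

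On this product torus, the projection of the orbit takes the linear form $(\vec{\alpha}_1 g_1(n) + \vec{x}_{1,0}, \ldots, \vec{\alpha}_\ell g_\ell(n) + \vec{x}_{\ell,0}) \pmod{1}$ for suitable $\vec{\alpha}_i \in \R^{s_i}$. The ergodicity of each flow $(b_i^s x_i)_s$ translates to $\vec{k}_i \cdot \vec{\alpha}_i \neq 0$ for every non-zero $\vec{k}_i \in \Z^{s_i}$. By Weyl's criterion it suffices to check that for every non-trivial $(\vec{k}_1, \ldots, \vec{k}_\ell) \in \Z^{s_1 + \cdots + s_\ell}$,
\[ \lim_{N\to\infty}\frac{1}{N}\sum_{n=1}^N e^{2\pi i \sum_{i=1}^\ell (\vec{k}_i \cdot \vec{\alpha}_i) g_i(n)} = 0. \]
Setting $\beta_i := \vec{k}_i \cdot \vec{\alpha}_i$, the non-triviality forces some $\vec{k}_i \neq 0$, hence the corresponding $\beta_i \neq 0$, so $(\beta_1,\ldots,\beta_\ell) \in \R^\ell \setminus \{\vec{0}\}$. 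By the standing hypothesis, $g := \sum_i \beta_i g_i \in \CT$, and then Fact~I(iv) yields that $(g(n))_n$ is equidistributed mod $1$, so the Weyl sum tends to $0$.

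The main obstacle will be the rigorous justification of the Leibman-type reduction to the horizontal torus for sequences $(b_i^{g_i(n)}x_i)_n$ indexed by \emph{tempered} (rather than polynomial or Hardy field) iterates, together with the bookkeeping verifying that passing to orbit closures and to the product system preserves the ``connected and simply connected'' hypothesis needed to define $b^s$ for real $s$. Once this reduction is secured, the remaining analysis on the torus is purely analytic and is driven entirely by the closedness of $\CT$ under non-trivial real linear combinations of the $g_i$'s.
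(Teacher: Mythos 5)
Your final Weyl--sum computation on the product torus is fine, and the reduction to ergodic nilflows on the orbit closures matches the paper (which does this via \cite[Lemma~5.2]{F4}). But the step you yourself flag as ``the main obstacle'' --- the Leibman-type reduction from equidistribution in the product nilmanifold to equidistribution of the projection on its maximal factor torus --- is not a technicality to be deferred; it is the entire content of the theorem, and it is not available as a black box. Leibman's criterion holds for \emph{polynomial} orbits $(b^{p(n)}\Gamma)_n$ (and for linear orbits), not for arbitrary sequences $(b^{a(n)}x)_n$: for a general unbounded sequence $a(n)$, equidistribution of the torus projection does not imply equidistribution upstairs. This is precisely why \cite{F4} (for Hardy field functions) and the present paper (Proposition~\ref{P:main_qaui}) do \emph{not} argue this way. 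Citing ``\cite[Section~5]{F4}'' for such a criterion is circular, since that section proves the Hardy-field analogue of the very statement you are trying to establish.

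The paper's actual route is quantitative: fix $R$, Taylor-expand each $g_i$ on the block $[Rn, Rn+R]$ to write $g_i(Rn+r)=p_{i,R,n}(r)+e_R(n)$ with $p_{i,R,n}$ a genuine polynomial in $r$ of degree $k_i$ whose coefficients are $g_i^{(j)}(Rn)/j!$, and then apply quantitative equidistribution results for these polynomial families (\cite[Proposition~4.2]{F4}, \cite[Lemma~2.1]{F4}), verifying the required derivative conditions $|g_i^{(k+1)}|$ decreasing, $1/t^k\prec g_i^{(k)}\prec 1$, $(g_i^{(k+1)})^k\prec (g_i^{(k)})^{k+1}$ from the definition of $\CT_{k_i}$. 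Note also that the linear-combination hypothesis enters differently than in your argument: it is used via IV~(iii) to show that non-trivial linear combinations of the \emph{$k$-th derivatives} $g_i^{(k)}$ (the leading Taylor coefficients governing the polynomial families) lie in $\CT_0$, hence are Fej\'{e}r and equidistribute --- not merely to show that $\sum_i\beta_i g_i(n)$ itself equidistributes mod $1$. Your proposal would only yield equidistribution of the projection to the maximal torus, which is strictly weaker than the claimed statement on a nilmanifold of step $\geq 2$.
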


\cite[Lemma~5.2]{F4} (working on $X=G/\Gamma,$ where $X=X_1\times\cdots\times X_\ell,$ $G=G_1\times\cdots\times G_\ell$ product of connected and simply connected $G_i$'s, and $\Gamma=\Gamma_1\times\cdots\times \Gamma_\ell$) implies that for $\ell\in\mathbb{N},$ nilmanifolds $X_i=G_i/\Gamma_i,$ and $b_i \in G_i,$ $1\leq i\leq \ell,$ we can find an $s_0\in \mathbb{R}$ such that $b_i^{s_0}$ acts ergodically on the nilmanifold $\overline{(b_i^s\Gamma_i)}_{s\in \mathbb{R}}.$ Using this, we have that Theorem~\ref{T:main_equi2} follows by the following proposition (analogous to \cite[Proposition~5.3]{F4}), the sketch of proof of which reveals why we have to postulate the assumption about linear combinations.

\begin{proposition}\label{P:main_qaui}
For $\ell\in \mathbb{N}$ let $g_\ell\prec\ldots\prec g_1\in \bigcup_{i\geq 1}\mathcal{T}_i$ with the property that $\sum_{i=1}^{\ell}\lambda_i g_i\in \CT$ for all $(\lambda_1,\ldots,\lambda_\ell)\in \mathbb{R}^\ell\setminus\{\vec{0}\},$ $X_i=G_i/\Gamma_i$ nilmanifolds, with $G_i$ connected and simply connected,  and $b_i\in G_i$ acting ergodically on $X_i,$ $1\leq i\leq \ell.$ Then the sequence \[(b_1^{g_1(n)}\Gamma_1,\ldots,b_\ell^{g_\ell(n)}\Gamma_\ell)_n\] is equidistributed in the nilmanifold $X_1\times\cdots\times X_\ell.$
\end{proposition}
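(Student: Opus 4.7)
The plan is to follow the overall strategy of \cite[Proposition~5.3]{F4} and reduce the equidistribution problem on the product nilmanifold to a statement about equidistribution modulo $1$ of real-valued linear combinations of the $g_i$'s, which can then be settled by the linear-combination hypothesis together with Fact I (iv).

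To carry this out, set $X=X_1\times\cdots\times X_\ell=G/\Gamma$ with $G=G_1\times\cdots\times G_\ell$ still connected and simply connected, and $\Gamma=\Gamma_1\times\cdots\times\Gamma_\ell$. By the standard equidistribution criterion on nilmanifolds, equidistribution of the sequence $(b_1^{g_1(n)}\Gamma_1,\ldots,b_\ell^{g_\ell(n)}\Gamma_\ell)_n$ in $X$ is equivalent to equidistribution of its projection to the maximal torus factor $G/([G,G]\Gamma)$. Since the latter is a product of tori, Weyl's criterion reduces this, in turn, to showing that
\[\lim_{N\to\infty}\frac{1}{N}\sum_{n=1}^N e^{2\pi i \sum_{i=1}^\ell \alpha_i g_i(n)}=0\]
for every non-trivial choice of horizontal characters $(\chi_1,\ldots,\chi_\ell)$, where $\alpha_i\in\mathbb{R}$ is determined by the relation $\chi_i(b_i^s\Gamma_i)=e^{2\pi i \alpha_i s}$ for $s\in\mathbb{R}$ (which is well-defined since each $G_i$ is connected and simply connected).

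The ergodicity of each $b_i$ on $X_i$ forces its projection to the torus factor to be an ergodic rotation, and this in turn implies that $\alpha_i$ is irrational, and in particular non-zero, whenever $\chi_i$ is non-trivial. Thus a non-trivial choice of characters automatically produces a tuple $(\alpha_1,\ldots,\alpha_\ell)\in\mathbb{R}^\ell\setminus\{\vec{0}\}$. The hypothesis on $g_1,\ldots,g_\ell$ then gives $h:=\sum_{i=1}^\ell \alpha_i g_i\in \CT$, and Fact I (iv) (that tempered functions produce integer sequences equidistributed modulo $1$) yields the desired cancellation in the exponential sum.

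The main point to get right is the bookkeeping of horizontal characters on the product nilmanifold, together with the verification that ergodicity of each $b_i$ forces non-vanishing of the slopes $\alpha_i$ associated to non-trivial characters; this is essentially the same passage as in the Hardy field case. The novelty specific to $\CT$ is not in the argument itself but in explaining why the linear-combination assumption is needed at all: unlike the Hardy field setting, the class $\CT$ is not automatically closed under non-trivial linear combinations, so the hypothesis has to be imposed by hand on the tuple $(g_1,\ldots,g_\ell)$. Once granted, however, the reduction to the classical Fej\'er--van der Corput equidistribution for tempered sequences is immediate and completes the proof.
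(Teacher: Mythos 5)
There is a genuine gap at the very first step of your reduction. The equivalence ``a sequence is equidistributed in $X=G/\Gamma$ if and only if its projection to the maximal torus factor $G/([G,G]\Gamma)$ is equidistributed'' is Leibman's criterion, and it is a theorem about \emph{polynomial} sequences in $G$. For a tempered function $g$ of degree $\geq 1$, the sequence $(b^{g(n)}\Gamma)_n$ is not a polynomial sequence, and the equivalence fails in general: on a non-abelian nilmanifold (already the Heisenberg case), a test function $F$ orthogonal to the functions lifted from the maximal torus sees the higher coordinates of $b^s$, which are polynomials of degree $\geq 2$ in $s$, hence involve quantities like $g(n)^2$; equidistribution of $\alpha_i g_i(n)$ modulo $1$ gives no control over these. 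Your argument therefore only proves equidistribution of the projection to the product of the maximal tori, which is strictly weaker than the statement of the proposition whenever some $G_i$ is non-abelian.

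This is precisely the difficulty that the paper's proof (following \cite[Proposition~5.3]{F4}) is built to overcome: one partitions $\{1,\ldots,RN\}$ into blocks of length $R$, Taylor-expands each $g_i$ on the block starting at $Rn$ to produce genuine polynomials $p_{i,R,n}(r)=\sum_{j=0}^{k_i}\frac{r^j}{j!}g_i^{(j)}(Rn)$, and then invokes the quantitative equidistribution machinery of \cite{F4} for the finite polynomial orbits $(b_i^{p_{i,R,n}(r)}\Gamma_i)_{1\leq r\leq R}$. Making that machinery apply is where all the real work lies, and it is exactly what your proposal omits: one must verify the derivative growth conditions $|g_i^{(k+1)}|$ decreasing, $1/t^k\prec g_i^{(k)}\prec 1$, $(g_i^{(k+1)})^k\prec (g_i^{(k)})^{k+1}$ required by \cite[Proposition~4.2]{F4}, the condition $g_i(x)/(x(\log x)^2)\prec g_i'(x)\ll g_i(x)/x$ required by \cite[Lemma~2.1]{F4}, and the facts that non-trivial linear combinations of $k$-th derivatives land in $\CT_0$ (IV (iii)) and that Fej\'er functions equidistribute (I (iv)). The linear-combination hypothesis and I (iv) do enter the paper's proof, but only at the final, abelian stage of that argument; they cannot by themselves carry the non-abelian part of the statement.
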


\begin{proof}[Sketch of proof] Following \cite{F4} we present the main arguments, showing the corresponding intermediate steps, which cover our case.  

We can assume that $X_i=X,$ $1\leq i\leq \ell,$ as the general case is similar. It suffices to show that 
\begin{equation}\label{E:equi53}
\frac{1}{N}\sum_{n=1}^N F(b_1^{g_1(n)}\Gamma,\ldots, b_\ell^{g_\ell(n)}\Gamma)
\end{equation}
goes to $0$ as $N\to\infty,$ where $F$ is a continuous function on $X^\ell$ with $0$ integral.

Assuming that each $g_i\in \mathcal{T}_{k_i}$ we have that $g_i^{(k_i+1)}(x)\to 0$ as $x\to\infty.$ Fixing an $R\in \mathbb{N},$ using the Taylor expansion for each $g_i,$ $1\leq i\leq \ell,$ for $1\leq r\leq R,$ we can write 
\[ g_i(Rn+r)=p_{i,R,n}(r)+e_{R}(n),\;\;\text{where}\;\; p_{i,R,n}(r)=\sum_{j=0}^{k_i}\frac{r^j}{j!}g_i^{(j)}(Rn)\;\;\text{and}\;\;e_R(n)\to 0.\] 
Using these polynomials, we have that
\begin{equation}\label{E:equi53.2}
\frac{1}{RN}\sum_{n=1}^{RN} F(b_1^{g_1(n)}\Gamma,\ldots,b_\ell^{g_\ell(n)}\Gamma)=\frac{1}{N}\sum_{n=1}^N\frac{1}{R}\sum_{r=1}^R F(b_1^{p_{1,R,n}(r)}\Gamma,\ldots,b_\ell^{p_{\ell,R,n}(r)}\Gamma)+\tilde{e}_{R}(N),
\end{equation} where $\tilde{e}_R(N)\to 0.$
Following the arguments of \cite[Proposition~5.3]{F4} we have to verify that the functions $g_i,$ $1\leq i\leq \ell,$ satisfy some assumptions of the intermediate results that imply the conclusion. More specifically:

 \cite[Proposition~4.2]{F4} can be used since the functions $g_i$ satisfy for some $k\in \mathbb{N}$ that \[|g^{(k+1)}_i(x)|\;\;\text{ is decreasing,}\;\; 1/t^k\prec g_i^{(k)}(x)\prec 1,\;\;\text{ and}\;\;(g_i^{(k+1)}(x))^k\prec (g_i^{(k)}(x))^{k+1}.\]
Indeed, by the definition of $\mathcal{T}_{k_i},$ picking $k=k_i+1,$ we have that \[\lim_{x\to\infty}\frac{xg_i'(x)}{g_i(x)}=\alpha_i,\] for some $k_i<\alpha_i\leq k_i+1,$ so 
\[\lim_{x\to\infty}\frac{xg_i^{(k_i+3)}(x)}{g_i^{(k_i+2)}(x)}=\alpha_i-k_i-2<0. \] Using also the fact that $g_i^{(k_i+1)}(x)\to 0$ monotonically as $x\to \infty,$ we have that $g_i^{(k_i+3)}(x)$ has (eventually) 
the opposite sign of $g_i^{(k_i+2)}(x)$, so $|g_i^{(k+1)}(x)|$ is decreasing.

 For the second part note that 
 \[\lim_{x\to\infty}\frac{|g^{k}(x)|}{1/x^k}=\lim_{x\to\infty} x^{k_i}\cdot x|g_i^{(k_i+1)}(x)|=\infty,\] and that $g_i^{k}(x)=g_i^{k_i+1}(x)\prec 1$ by definition.
 
 For the third part, by easy calculations, we have 
 \[\lim_{x\to\infty}\frac{(g_i^{(k+1)}(x))^{k}}{(g_i^{k}(x))^{k+1}}=\lim_{x\to\infty}\left(\frac{xg_i^{(k_i+2)}(x)}{g_i^{(k_i+1)}(x)}\right)^{k_i+1}\cdot\lim_{x\to\infty}\frac{1}{x^{k_i+1}g_i^{(k_i+1)}(x)}=0.\]
 \cite[Lemma~2.1]{F4} can be used since every $g_i$ satisfies 
 \[\frac{g_i(x)}{x(\log x)^2}\prec g_i'(x)\ll \frac{g_i(x)}{x}. \]
 Indeed, \[\frac{g_i(x)}{x(\log x)^2 g_i'(x)}=\frac{g_i(x)}{xg_i'(x)}\cdot\frac{1}{(\log x)^2}\to 0,\;\;\text{ as}\;\;x\to\infty,\;\;\text{ and}\;\;\frac{xg_i'(x)}{g_i(x)}\;\;\text{ is bounded}.\]
 The last two facts that we have to check is that a non-trivial linear combination of $k$-th derivatives of functions which belong to the same $\mathcal{T}_{k}$ and have different growth rates, is a function in $\mathcal{T}_{0},$ and whenever $g$ is a Fej\'{e}r function that $(g(n))_n$ is equidistributed in $\T$.
 
The first fact follows by IV (iii), and the second one by I (iv).

The result now follows by the proof of \cite[Proposition~5.3]{F4}, eventually deducing that 
\[\lim_{R\to\infty}\limsup_{N\to\infty}\frac{1}{N}\sum_{n=1}^N \left| \frac{1}{R}\sum_{r=1}^R F(b_1^{p_{1,R,n}(r)}\Gamma,\ldots,b_\ell^{p_{\ell,R,n}(r)}\Gamma) \right|=0,\] hence, via \eqref{E:equi53.2} we have that \eqref{E:equi53} goes to $0$ as $N\to\infty,$ finishing the argument.
\end{proof}

\begin{remark*}  Following the notation of Theorem~\ref{T:main_equi}, its conclusion implies that for all $F_i\in C(X_i)$ we have
\begin{equation}\label{E:equi_1}
\lim_{N\to\infty}\frac{1}{N}\sum_{n=1}^N\prod_{i=1}^\ell F_i(b_i^{[g_i(n)]}x_i)=\prod_{i=1}^\ell \lim_{N\to\infty}\frac{1}{N}\sum_{n=1}^N F_i(b_i^n x_i).
\end{equation}

Also, under the assumptions of Theorem~\ref{T:main_equi}, the proof of Proposition~\ref{P:main_qaui} (after reducing to the case where each $G_i$ is connected and simply connected and each $b_i\in G_i$ acts ergodicaly on $X_i$), implies that for every $F\in C(X_1\times\cdots\times X_\ell)$ we have that
\begin{equation}\label{E:equi_2}
\lim_{R\to\infty}\limsup_{N\to\infty}\frac{1}{N}\sum_{n=1}^N\left|\frac{1}{R}\sum_{r=1}^R F(b_1^{[g_1(Rn+r)]}x_1,\ldots,b_\ell^{[g_\ell(Rn+r)]}x_\ell)-\int F\;dm_{Y}\right|=0,
\end{equation}
where $m_Y$ is the Haar measure on the subnilmanifold $Y=\overline{(b_1^nx_1)}_n\times\cdots\times\overline{(b_\ell^nx_\ell)}_n.$
\end{remark*}

\section{Proof of main result}\label{main}






We are now in position, following \cite[Subsections~7.3 and 7.4]{F3}, to combine the results from the previous section to show Theorem~\ref{T:main_general}. The proof also uses the following strong decomposition theorem:

\begin{theorem}$($\emph{Strong decomposition}, \cite[Theorem~3.5]{F3}$)$\label{t: strong decomposition}
Let $(X,\mathcal{B},\mu,T)$ be a system, $f\in L^{\infty}(\mu)$ and $k\in\mathbb{N}.$ Then for every $\varepsilon>0,$ there exist functions $f_s,$ $f_u,$ and $f_e,$ with $L^{\infty}(\mu)$ norm at most $2\norm{f}_\infty,$ such that $f=f_s+f_u+f_e,$ $\nnorm{f_u}_{k+1,T}=0,$ $\norm{f_e}_2\leq \varepsilon,$ and for almost every $x\in X$ the sequence $(T^n f_s(x))_n$ is a $k$-step nilsequence.
\end{theorem}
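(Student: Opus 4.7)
The strong decomposition theorem is a quantitative refinement of the Host--Kra structure theorem, and the plan is to prove it in three stages. First, the qualitative inverse theorem of Host and Kra provides a characteristic factor $\mathcal{Z}_k$ of the system for the seminorm $\nnorm{\cdot}_{k+1,T}$, in the sense that $\nnorm{f}_{k+1,T}=0$ if and only if $\E(f\mid\mathcal{Z}_k)=0$. With this in hand, write
\[
f=\E(f\mid\mathcal{Z}_k)+\bigl(f-\E(f\mid\mathcal{Z}_k)\bigr),
\]
and put $f_u:=f-\E(f\mid\mathcal{Z}_k)$. Then $\nnorm{f_u}_{k+1,T}=0$ holds automatically, and $\norm{f_u}_\infty\le 2\norm{f}_\infty$ by the triangle inequality. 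It now remains to approximate $\E(f\mid\mathcal{Z}_k)$ by a function $f_s$ producing almost-everywhere nilsequences, at the cost of an $L^2$-error of size at most $\varepsilon$.

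Second, exploit the structural fact, also due to Host and Kra, that $\mathcal{Z}_k$ is an inverse limit of $k$-step nilsystems $X_j=G_j/\Gamma_j$, with the restriction of $T$ realised as translation by some $b_j\in G_j$. Consequently, $L^2(\mathcal{Z}_k)$ is the $L^2$-closure of the union of the subspaces $L^2(X_j)$ pulled back via the factor maps $\pi_j:X\to X_j$. Given $\varepsilon>0$, choose $j$ and a bounded measurable $\tilde{f}\in L^\infty(X_j)$, truncated so that $\norm{\tilde{f}}_\infty\le\norm{\E(f\mid\mathcal{Z}_k)}_\infty\le\norm{f}_\infty$, such that $\norm{\E(f\mid\mathcal{Z}_k)-\tilde{f}\circ\pi_j}_2\le\varepsilon/2$. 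Since $X_j$ is a compact metric nilmanifold equipped with its Haar measure, Lusin's theorem (followed by one more truncation) yields a continuous $F\in C(X_j)$ with $\norm{F}_\infty\le\norm{f}_\infty$ and $\norm{F-\tilde{f}}_{L^2(X_j)}\le\varepsilon/2$. Set $f_s:=F\circ\pi_j$ and $f_e:=\E(f\mid\mathcal{Z}_k)-f_s$; the triangle inequality gives $\norm{f_e}_2\le\varepsilon$, while $\norm{f_s}_\infty\le\norm{f}_\infty$ and $\norm{f_e}_\infty\le 2\norm{f}_\infty$, completing the bookkeeping.

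The conceptual weight of the argument is carried entirely by the Host--Kra inverse theorem and the identification of $\mathcal{Z}_k$ with an inverse limit of $k$-step nilsystems, both of which I would assume as deep external input. The delicate point in the remaining argument concerns \emph{pointwise} rather than $L^2$ behaviour: one must ensure that $(T^n f_s(x))_n$ is a \emph{genuine} $k$-step nilsequence for almost every $x$, not merely a sequence that is close in some averaged sense. This is precisely why the final $L^2$-approximation is performed by a \emph{continuous} function on the nilmanifold $X_j$ before pulling back via $\pi_j$; once $F$ is continuous, the identity $(T^n f_s)(x)=F(b_j^n\,\pi_j(x))$ exhibits the nilsequence property manifestly at every point where $\pi_j$ is defined, which, being a measure-theoretic factor map, is a full-measure set. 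The $L^\infty$-bookkeeping through the two successive truncations is routine, and the constant $2\norm{f}_\infty$ in the conclusion is accounted for by the single use of the triangle inequality in each of $f_u$ and $f_e$.
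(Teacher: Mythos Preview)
The paper does not prove this theorem at all: it is quoted verbatim as \cite[Theorem~3.5]{F3} and used as a black box in the proof of Theorem~\ref{T:main_general}. So there is no ``paper's own proof'' to compare against.

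That said, your sketch is correct and is exactly the standard route to this result (and essentially how it is obtained in the cited reference, which in turn rests on \cite{HK99} and \cite{CFH}). The three ingredients you isolate---the Host--Kra characterization $\nnorm{f}_{k+1,T}=0\iff \E(f\mid\mathcal{Z}_k)=0$, the identification of $\mathcal{Z}_k$ as an inverse limit of $k$-step nilsystems, and the Lusin step to force continuity of $F$ on the nilmanifold---are precisely what is needed, and you handle the pointwise-versus-$L^2$ subtlety correctly: continuity of $F$ is what makes $n\mapsto F(b_j^n\pi_j(x))$ an honest nilsequence, and the countable intersection of the full-measure sets on which $\pi_j(T^n x)=b_j^n\pi_j(x)$ holds gives the almost-everywhere conclusion. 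The truncation bookkeeping is fine because truncating at level $\norm{f}_\infty$ can only decrease the $L^2$-distance to a function already bounded by $\norm{f}_\infty$.
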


\begin{proof}[Proof of Theorem~\ref{T:main_general}]
Assuming that some $\mathbb{E}(f_i|\mathcal{I}(T_i))=0,$ it suffices to show that 
\begin{equation}\label{E:final}
\limsup_{N\to\infty}\norm{\frac{1}{N}\sum_{n=1}^N\prod_{i=1}^\ell T_i^{[g_i(n)]}f_i}_2=0.
\end{equation}
 We split the proof into three steps.

\medskip

\noindent {\bf Claim 1.} The result holds if all the $g_i$'s belong to $\mathcal{T}_0.$

\medskip

\noindent Indeed, this follows from the remark after the proof of Theorem~\ref{T:sub-linear} in Section~\ref{slF}.

\medskip

\noindent {\bf Claim 2.} The result follows if all the $g_i$'s belong to $\bigcup_{i\geq 1}\mathcal{T}_i.$

\medskip

\noindent Indeed, for $\varepsilon>0,$ using the decomposition from Theorem~\ref{t: strong decomposition}, for any $1\leq i\leq \ell,$ we can write $f_i=f_{i,s}+f_{i,u}+f_{i,e},$ where $\nnorm{f_{i,u}}_{k+1,T_i}=0,$ $\norm{f_{i,e}}_2\leq \varepsilon$ and for almost every $x\in X$ the sequence $(T^n f_{i,s}(x))_n$ is a $k$-step nilsequence. It is clear that the contribution of the terms $f_{i,u}$ is negligible because of Proposition~\ref{p: general seminorm}, while of the terms $f_{i,e}$ is bounded by a constant multiple of $\varepsilon$, hence, it suffices to check the behavior of the terms $f_{i,s}.$ Because of \eqref{E:equi_1} we have that 
\[\lim_{N\to\infty}\frac{1}{N}\sum_{n=1}^N \prod_{i=1}^\ell T_i^{[g_i(n)]} (f_{i,s}(x))=\prod_{i=1}^\ell \lim_{N\to\infty}\frac{1}{N}\sum_{n=1}^N T_i^n (f_{i,s}(x))\] which is equal, up to a constant multiple of $\varepsilon,$ to \[\prod_{i=1}^\ell \lim_{N\to\infty}\frac{1}{N}\sum_{n=1}^N T_i^n (f_{i}(x))=\prod_{i=1}^\ell \tilde{f}_i=0.\] Consequently, the same is true in $L^2(\mu)$ as each $f_i$ is bounded, proving the claim.

\medskip

\noindent {\bf Claim 3.} The result follows if for some $1\leq i_0\leq \ell-1$ we have $g_\ell\prec \ldots\prec g_{i_0+1}\in \mathcal{T}_0$ and $g_{i_0}\prec\ldots\prec g_1\in \bigcup_{i\geq 1}\mathcal{T}_i.$

\medskip

\noindent In this last case, note that 
\[\limsup_{N\to\infty}\norm{\frac{1}{N}\sum_{n=1}^N\prod_{i=1}^\ell T_i^{[g_i(n)]}f_i}_2=\limsup_{N\to\infty}\norm{\frac{1}{N}\sum_{n=1}^N\frac{1}{R}\sum_{r=1}^R\prod_{i=1}^\ell T_i^{[g_i(Rn+r)]}f_i}_2.\]
The right-hand side limit, as the functions $g_{i_0+1},\ldots,g_{\ell}$ are sublinear (so their derivatives are going to $0$), is also equal to
\[\limsup_{N\to\infty}\norm{\frac{1}{N}\sum_{n=1}^N\prod_{i=i_0+1}^\ell T_i^{[g_i(Rn)]}f_i\cdot \frac{1}{R}\sum_{r=1}^R\prod_{i=1}^{i_0} T_i^{[g_i(Rn+r)]}f_i}_2,\] since, for every $R\in \mathbb{N},$ for a set of $n$'s of density $1$ we have that $[g_i(Rn+r)]=[g_i(Rn)],$ $1\leq r\leq R,$ $i_0+1\leq i\leq \ell.$ This last limsup is bounded by 
\[ \prod_{i=i_0+1}^\ell \norm{f_i}_\infty\cdot \limsup_{N\to\infty}\frac{1}{N}\sum_{n=1}^N\norm{\frac{1}{R}\sum_{r=1}^R\prod_{i=1}^{i_0}T_i^{[g_i(Rn+r)]}f_i}_2.\]
 Arguing as in Claim~2, using Proposition~\ref{p:general with R} instead of Proposition~\ref{p: general seminorm} and \eqref{E:equi_2} instead of \eqref{E:equi_1}, we have that this last limsup, as $R\to\infty$, goes to $0,$ as was to be shown. 
\end{proof}


\subsection{Closing comments} 
While for special subclasses of Hardy field (\cite{F3}) and tempered functions (as we just saw) of different growth rates we have convergence to the expected limit for general systems, i.e., more general results comparing to the ones for polynomials where we have to postulate additional assumptions either on the system or the transformations,\footnote{ Indeed, the analogous to Theorem~\ref{T:main_general} statement for polynomials of different degrees is false. Even for $\ell=1$ and $p_1(t)=t^2$ the limit is not in general the expected one. Actually, there are no general results for some particular ``nice'' classes of polynomials for which the limit of \eqref{E:Multiple} is known, with a few exceptions being: for linear iterates (\cite{HK99}), for a very special case of degree $2$ (in the lengthy \cite{Au}), and, for single $T$ and ``independent enough'' real polynomials (\cite{KK}).} no Walsh-type result is known for these classes. On the other hand, the results for polynomials are usually ``more uniform'' as one can replace the conventional Ces\`aro averages, i.e., $\lim_{N\to\infty}\frac{1}{N}\sum_{n=1}^N,$ with uniform ones, i.e., $\lim_{N-M\to\infty}\frac{1}{N-M}\sum_{n=M}^{N-1}$ (or even ones along general F{\o}lner sequences),
 obtaining also Khintchine-type recurrence applications for the corresponding expressions; the same is not true for tempered or Hardy field functions for they can be constant at arbitrarily large intervals.
As special cases of \eqref{E:Multiple} are known for (integer part of) polynomial, Hardy field and tempered functions, it is only natural for someone to ask whether we can have a result where we have a combination of iterates coming from the union of these three classes. We close this article with this exact question.

\medskip

\begin{question*}
Is it true that \eqref{E:Multiple} converges to the expected limit for $a_i=[g_i],$ where the $g_i$'s are ``distinct enough'' polynomial, Hardy field or tempered functions?
\end{question*}


\subsection*{Acknowledgements} Thanks go to V.~Bergelson not only for suggesting the interesting topic of tempered functions but also for his constant support and investment of many hours on numerous meetings during the development of this paper. I also thank deeply N.~Frantzikinakis for his constant support and fruitful discussions on the topic. Last, but not least, I want to express my indebtedness to the anonymous referee, the feedback of whom improved the quality of this paper by correcting a number of typos, suggesting also different approaches in some parts (the cleaner presentation of Case 2 of Proposition~\ref{P:vn} is such an example).


\begin{thebibliography}{9999}

\bibitem{Au} T.~Austin. Pleasant extensions retaining algebraic
structure, II. {\em J. d'Analyse Math{\'e}matique} \textbf{126} (2015), 1--111. 

\bibitem{B0} V.~Bergelson. Ergodic Ramsey theory. {\em Logic and combinatorics (Arcata, Calif., 1985)}, 63--87, {\em Contemp. Math.}, \textbf{65}, {\em Amer. Math. Soc., Providence, RI}, 1987.

\bibitem{B1} V.~Bergelson. Ergodic Ramsey Theory -- an update, Ergodic Theory of $\mathbb{Z}^d$-actions (edited by M. Pollicott and K. Schmidt), {\em London Math. Soc. Lecture Note Series} {\bf{228}} (1996), 1--61.

\bibitem{Be87a} V.~Bergelson. Weakly mixing PET. {\em Ergodic Theory
Dynam. Systems} \textbf{7} (1987), no. 3, 337--349.

\bibitem{BK} V.~Bergelson, I.~H\r{a}land-Knutson. Weakly mixing implies mixing of higher orders along tempered functions. {\em Ergodic Theory
	Dynam. Systems} \textbf{29} (2009), no. 5, 1375--1416.

\bibitem{BHK05} V.~Bergelson, B.~Host, B.~Kra, with an appendix by I. Ruzsa.
Multiple recurrence and nilsequences. {\em Inventiones Math.} {\bf
160} (2005), no. 2, 261--303.

\bibitem{BKS} V.~Bergelson, G.~Kolesnik, Y.~Son. Uniform distribution of subpolynomial functions along primes and applications. To appear in J. d'Analyse Math{\'e}matique.

\bibitem{BL} V.~Bergelson, A.~Leibman. Distribution of values of bounded generalized polynomials. {\em Acta Mathematica} \textbf{198} (2007), 155--230.

\bibitem{BL0} V.~Bergelson, A.~Leibman. Polynomial extensions of van der Waerden's and Szemerédi's theorems. {\em Journal of AMS} {\bf 9} (1996), no. 3, 725--753.



\bibitem{CFH} Q.~Chu, N.~Frantzikinakis, B.~Host. Ergodic averages of commuting transformations with distinct degree polynomial iterates. {\em Proc. of the London Math. Society.} (3), \textbf{102} (2011), 801--842.
  
\bibitem{DKS} S.~Donoso, A.~Koutsogiannis, W.~Sun. Pointwise multiple averages for sublinear functions. {\em Ergodic
Theory Dynam. Systems} {\bf{40}} (2020), 1594--1618.

\bibitem{DKS2} S.~Donoso, A.~Koutsogiannis, W.~Sun. Seminorms for multiple averages along polynomials and applications to joint ergodicity. To appear in J. d'Analyse Math\'{e}matique.
  
\bibitem{F}
N.~Frantzikinakis. Multiple correlation sequences and nilsequences. {\em Invent. Math.} {\bf 202} (2015), no. 2, 875--892.

\bibitem{F2} N.~Frantzikinakis.
Multiple recurrence and convergence for Hardy field sequences of polynomial growth. 
{\em J. d'Analyse Math.} \textbf{112} (2010), 79--135.   

\bibitem{F3} N.~Frantzikinakis.
A multidimensional Szemer\'{e}di theorem for Hardy sequences of different growth. 
{\em Tran. of the A. M. S.} \textbf{367}, no. 8, (2015), 5653--5692.

\bibitem{F4} N.~Frantzikinakis. Equidistribution of sparse sequences on nilmanifolds. {\em J. d'Analyse Mathematique}, \textbf{109} (2009), 353--395.

\bibitem{Fu}
H.~Furstenberg. Ergodic behavior of diagonal measures and a theorem of Szemer\'{e}di on arithmetic progressions. {\em J. d'Analyse Math.} \textbf{31} (1977), 204-–256.

\bibitem{FKO}
H.~Furstenberg, Y.~Katznelson, D.~Ornstein. The ergodic theoretical proof of Szemer\'{e}di's theorem. {\em Bull. Amer. Math. Soc.} \textbf{7} (1982), 527--552.

\bibitem{HK99} B.~Host, B.~Kra. Nonconventional ergodic averages and nilmanifolds. {\em Annals of Math.} \textbf{161} (2005), no. 1, 397--488. 


\bibitem{HK09} B.~Host, B.~Kra. Uniformity seminorms on $l^{\infty}$ and applications.
{\em J. d'Analyse Math.} \textbf{108} (2009), 219--276.

\bibitem{K1} A.~Koutsogiannis. Integer part polynomial correlation sequences. {\em Ergodic
Theory Dynam. Systems} \textbf{38} (2018), no. 4, 1525--1542.

\bibitem{K2} A.~Koutsogiannis. Closest integer polynomial multiple recurrence along shifted primes. {\em Ergodic
Theory Dynam. Systems} {\bf 38} (2018), no. 2, 666--685.

\bibitem{KK} A.~Koutsogiannis, D.~Karageorgos. Integer part independent polynomial averages and applications along primes. {\em Studia Mathematica} {\bf{249}} (2019), no. 3, 233--257.

\bibitem{Nid} L.~Kuipers, H.~Niederreiter. Uniform distribution of sequences. Pure and Applied Mathematics. \emph{Wiley-
Interscience}, New York-London-Sydney, 1974.

\bibitem{L10} A.~Leibman. Multiple polynomial sequences and nilsequences.   {\em Ergodic
Theory Dynam. Systems} \textbf{30} (2010), no. 3,
  841--854.

\bibitem{L14} A.~Leibman. Nilsequences, null-sequences, and multiple correlation sequences.      {\em Ergodic
Theory Dynam. Systems} \textbf{35} (2015), no. 1, 176--191.





\bibitem{W12} M.~Walsh. Norm convergence of nilpotent ergodic averages. {\em Annals of Mathematics} \textbf{175} (2012), no. 3, 1667--1688.

\bibitem{Weyl} H.~Weyl. \"Uber die Gleichverteilung von Zahlen mod Eins.
\emph{Math. Ann.}, \textbf{77} (1916), 313--352.

\end{thebibliography}
\end{document}